\newcommand\leqdot{\mathrel{\ooalign{$\leq$\cr
  \hidewidth\raise0.225ex\hbox{$\cdot\mkern0.5mu$}\cr}}}
\newcommand{\thickhline}{%
    \noalign {\ifnum 0=`}\fi \hrule height 1pt
    \futurelet \reserved@a \@xhline
}
\definecolor{darkblue}{rgb}{0.0,0,0.7} 
\definecolor{darkred}{rgb}{0.7,0,0} 
\newcommand{\darkred}{\color{darkred}} 
\newcommand{\defn}[1]{\emph{\darkred #1}} 
\def\Div{{\sf{Div}}}
\newtheorem{theorem}{Theorem}[section]
\newtheorem{prop}[theorem]{Proposition}
\newtheorem{lemma}[theorem]{Lemma}
\newtheorem{cor}[theorem]{Corollary}
\newtheorem*{question}{Question}
\theoremstyle{definition}
\newtheorem{definition}[theorem]{Definition}
\newtheorem{rmq}[theorem]{Remark}
\newtheorem{exple}[theorem]{Example}
\numberwithin{equation}{section}
\def\Div{{\mathsf{Div}}}
\crefname{figure}{figure}{figures}
\Crefname{figure}{Figure}{Figures}
\title[Odd and even Fibonacci lattices arising from a Garside monoid]{Odd and even Fibonacci lattices arising from a Garside monoid}
\author{Thomas Gobet}
\address{Institut Denis Poisson, CNRS UMR 7350, Faculté des Sciences et Techniques, Université de Tours, Parc de Grandmont, 
37200 TOURS, France}
\email{thomas.gobet@lmpt.univ-tours.fr}
\author{Baptiste Rognerud}
\address{Institut de Mathématiques de Jussieu, Paris Rive Gauche (IMJ-PRG), Campus des Grands Moulins,
Université de Paris - Boite Courrier 7012, 8 Place Aurélie Nemours,
75205 PARIS Cedex 13, France}
\email{baptiste.rognerud@imj-prg.fr}
\begin{document}
\maketitle

\begin{abstract}
We study two families of lattices whose number of elements are given by the numbers in even (respectively odd) positions in the Fibonacci sequence. The even Fibonacci lattice arises as the lattice of simple elements of a Garside monoid partially ordered by left-divisibility, and the odd Fibonacci lattice is an order ideal in the even one. We give a combinatorial proof of the lattice property, relying on a description of words for the Garside element in terms of Schröder trees, and on a recursive description of the even Fibonacci lattice. This yields an explicit formula to calculate meets and joins in the lattice. As a byproduct we also obtain that the number of words for the Garside element is given by a little Schröder number. 
\end{abstract}

\tableofcontents

\thispagestyle{empty}

\section{Introduction}

Several algebraic structures naturally yield examples of lattices: as elementary examples, one can cite the lattice of subsets of a given set ordered by inclusion, or the lattice of subgroups of a given group. One can then study which properties are satisfied by the obtained lattices, or conversely, starting from a known lattice, wondering for instance if it can be realized in a given algebraic framework, or if a property of the lattice implies properties of the attached algebraic structure(s) and vice-versa. 

The aim of this paper is to give a combinatorial description of a finite lattice that appeared in the framework of \textit{Garside theory}. We will not recall results and principles of Garside theory as they will not be used in this paper, but the interested reader can look at~\cite{DP, Garside} for more on the topic. This is a branch of combinatorial group theory which aims at establishing properties of families of infinite groups such as the solvability of the word problem, the conjugacy problem, the structure of the center, etc. Roughly speaking, a Garside group is a group of fraction of a monoid (called a \textit{Garside monoid}) with particularly nice divisibility properties, which ensures that the above-mentioned problems can be solved. Such a monoid $M$ has no nontrivial invertible element, and comes equipped with a distinguished element $\Delta$ (called a~\textit{Garside element}) whose left- and right-divisors are finite, coincide, generate the monoid, and form a lattice under left- and right-divisibility. The left- or right-divisors of $\Delta$ are called the~\textit{simples}. 

The fundamental example of a Garside group is the $n$-strand Artin braid group~\cite{garside69}. It admits several non-equivalent Garside structures (i.e., nonisomorphic Garside monoids whose group of fractions are isomorphic to the $n$-strand braid group), and the lattice of simples in the first discovered such Garside structure is isomorphic to the weak Bruhat order on the symmetric group. Several widely studied lattices can be realized as lattices of simples of a Garside monoid: this includes the lattices of left and right weak Bruhat order on any finite Coxeter group~\cite{BS, Deligne}, the lattice of (generalized) noncrossing partitions attached to a finite Coxeter group~\cite{Dual, BKL}, etc. (see also~\cite{Picantin} for many other examples). This suggests the following question: 

\begin{question}
Which lattices can appear as lattices of simples of Garside monoids ?
\end{question}

The aim of this paper is to study a family $P_n$ of lattices arising as simples of a family $M_n$, $n\geq 2$ of Garside monoids introduced by the first author~\cite{Gobet}. For $n=2$, the corresponding Garside group is isomorphic to the $3$-strand braid group $B_3$, while in general it is isomorphic to the $(n,n+1)$-torus knot group, which for $n>3$ is a (strict) extension of the $(n+1)$-strand braid group $B_{n+1}$. The lattice property of $P_n$ follows from the fact proven in~\emph{op.\ cit.} that $M_n$ is a Garside monoid, but it gives very little information about the structure and properties of the lattice. For instance, one does not have a formula enumerating the number of simples, and only an algorithm to calculate meet and joins in the lattice. 

In Section~\ref{sec_alg} we give a new proof of the lattice property of $P_n$ (Theorem~\ref{thm_lattice_main}) by exhibiting the recursive structure of the poset. Every lattice $P_n$ turns out to contain the lattices $P_i$, $i< n$ as sublattices. Note that an ingredient of the proof of Theorem~\ref{thm_lattice_main} is proven later on in the paper, as it relies on a combinatorial description for the set of words for the Garside element in terms of Schröder trees. More precisely, in Section~\ref{sec_Schröder} we establish a simple bijection between the set of words for $\Delta_n$ and the set of Schröder trees on $n+1$ leaves, in such a way that applying a defining relation of $M_n$ to a word amounts to applying what we call a "local move" on the corresponding Schröder tree (Theorem~\ref{thm_main_reduced} and Corollary~\ref{cor_graph}). These local moves are given by specific edge contraction and are related to the notion of refinement considered in~\cite{loday}. This allows us to establish in Proposition~\ref{dni_trees} an isomorphism of posets between subposets of $P_n$ and $P_i$, $i< n$, required in the proof of Theorem~\ref{thm_lattice_main}. 

Finally, the obtained recursive description of $P_n$ together with the description of words for $\Delta_n$ in terms of Schröder trees allows us to derive a few enumerative results. This is done in Section~\ref{sec_enumerative}. The first one is that the number of elements of $P_n$ is given by $F_{2n}$, where $F_i$ is the $i$-th Fibonacci number (Lemma~\ref{cor_fib_even}). We thus call $P_n$ the~\textit{even Fibonacci lattice}. The atoms of $M_n$ turn out to have the same left- and right-lcm, which is strictly less than $\Delta_n$. We also show that the sublattice of $P_n$ defined as the order ideal of this lcm has $F_{2n-1}$ elements (Lemma~\ref{lem_odd_fib}), and thus call it the~\textit{odd Fibonacci lattice}. Other enumerative results include the determination of the number of words for the Garside elements (Corollary~\ref{cor_words_g}), and the number of words for the whole set of simples (Theorem~\ref{thm_words_whole}). 

Recall that the Garside monoid $M_n$ under study in this paper has group of fractions isomorphic to the $(n,n+1)$-torus knot group. This Garside structure was generalized to all torus knot groups in~\cite{gobet_bis}. It would be interesting to have a description of the lattices of simples of this bigger family of Garside monoids. 
  
\begin{figure}
 	
	\begin{center}
 		\begin{pspicture}(0,0)(12,13)
 			\rput(4,0){\color{blue}\tiny $1$}
 			\rput(4,1){\color{blue}\tiny $\rho_1$}
 			\rput(2,2){\color{blue}\tiny $\rho_2$}
 			\rput(0,3){\color{blue}\tiny $\rho_2 \rho_1$}
 			\rput(8,3){\color{blue}\tiny $\rho_3$}
 			\rput(4,4){\color{blue}\tiny $\rho_1 \rho_3$}
 			\rput(10,4){\color{blue}\tiny $\rho_3 \rho_1$}
 			\rput(2,5){\color{blue}\tiny $\rho_1 \rho_3 \rho_1$}
 			\rput(4.3,5.2){\tiny $\rho_3 \rho_2$}
 			\rput(0,6){\color{blue}\tiny $\rho_2 \rho_1 \rho_3$}
 			\rput(5,6.5){\tiny $\rho_3 \rho_2 \rho_1$}
 			\rput(7.7,6){\color{blue}\tiny $\rho_3^2$}
 			\rput(10,7){\color{blue}\tiny $\rho_3 \rho_1 \rho_3$}
 			\rput(12,7){\tiny $\rho_3^2 \rho_1$}
 			\rput(2,8){\color{blue}\tiny $(\rho_1 \rho_3)^2$}
 			\rput(9.8,8){\tiny $(\rho_3 \rho_1)^2$}
 			\rput(4,9){\color{blue}\tiny $\rho_3^3$}
 			\rput(8,9){\tiny $\rho_3 \rho_2 \rho_1 \rho_3$}
 			\rput(12,10){\tiny $\rho_3^2 \rho_1 \rho_3$}
 			\rput(8,11){\tiny $(\rho_3 \rho_1)^2 \rho_3$}
 			\rput(6,12){\tiny $\rho_3^4$}
 			\psline[linecolor=blue](4,0.25)(4,0.75)
 			\psline[linecolor=blue](4,1.25)(4,3.75)
 			\psline[linecolor=blue](3.8,0.25)(2.25,2)
 			\psline[linecolor=blue](4.2,0.25)(7.8,2.9)
 			\psline[linecolor=blue](8.2,3.2)(9.6,3.8)
 			\psline[linecolor=blue](2,2.25)(2,4.75)
 			\psline[linecolor=blue](0,3.25)(0,5.75)
 			\psline[linecolor=blue](1.8,2.1)(0.4,3)
 			\psline[linecolor=blue](4.4,4.2)(7.4,5.8)
 			\psline[linecolor=blue](3.6,4.2)(2.55,5)
 			\psline[linecolor=blue](2,5.25)(2,7.75)
 			\psline(7.92,3.25)(4.3,5)
 			\psline[linecolor=blue](10,4.25)(10,6.75)
 			\psline(10,7.25)(10,7.75)
 			\psline(7.9,6)(11.6,7)
 			\psline(12,7.25)(12,9.75)
 			\psline[linecolor=blue](9.4,7)(4.3,9)
 			\psline[linecolor=blue](0,6.2)(1.7,7.75)
 			\psline[linecolor=blue](2.6,8.2)(3.8,8.9)
 			\psline(4.2,9.1)(5.8,11.9)
 			\psline(5,6.65)(8,8.75)
 			\psline(8,9.25)(8,10.75)
 			\psline(10,8.25)(8.2,10.75)
 			\psline[linecolor=blue](7.5,6.1)(4,8.75)
 			\psline(11.6,10.3)(6.3,12)
 			\psline(7.85,11.18)(6.3,11.9)
 			\psline[linecolor=blue](8.1,3.25)(7.7,5.75)
 			\psline(4.5,5.4)(9.5,7.75)
 			\psline(4.3,5.45)(4.8,6.3)
 			
 		\end{pspicture}
 	\end{center} 
 	\caption{The even Fibonacci lattice for $n=3$ and (in blue) the odd Fibonacci lattice inside it.}
 	\label{simples_3}
 \end{figure}

\section{Definition and structure of the poset}\label{sec_alg}

\subsection{Definition of the poset}

The beginning of this section is devoted to explaining how the poset under study is defined. We recall the definition of the monoid from which it is built, as well as a few properties of this monoid (all of which are proven in~\cite{Gobet}). 

Let $M$ be a monoid and $a,b\in M$. We say that $a$ is a \defn{left divisor} of $b$ (or that $b$ is a \defn{right multiple} of $a$) if there is $c\in M$ such that $ac=b$. We similarly define right divisors and left multiples. 

Let $M_0$ be the trivial monoid and for $n\geq 1$, let $M_n$ be the monoid defined by the presentation \begin{equation}\label{old}
\bigg\langle \rho_1, \rho_2, \dots, \rho_n \ \bigg\vert\ \rho_1  \rho_n \rho_{i} =\rho_{i+1} \rho_n~\text{for all }1 \leq i \leq n-1
\ \bigg\rangle.
\end{equation} We denote by $\mathcal{S}$ the set of generators $\{\rho_1, \rho_2, \dots, \rho_n\}$, and by $\mathcal{R}$ the defining relations of $M_n$. This monoid was introduced by the first author in~\cite[Definition 4.1]{Gobet}. Note that this monoid is equipped with a length function $\lambda : M_n\longrightarrow \mathbb{Z}_{\geq 0}$ given by the multiplicative extension of $\lambda(\rho_i)=i$ for all $i=1, \dots, n$, which is possible since the defining relations do not change the length of a word. As a corollary, the only invertible element in $M_n$ is the identity, and the left- and right-divisibility relations are partial orders on $M_n$. We will write $a\leq_L b$ or simply $a\leq b$ if $a$ left-divides $b$, and $a\leq_R b$ if $a$ right-divides $b$.  

This monoid was shown to be a so-called \defn{Garside monoid} (see~\cite[Theorem 4.18]{Gobet}), with corresponding Garside group (which has the same presentation as $M_n$) isomorphic to the $(n,n+1)$-torus knot group, that is, the fundamental group of the complement of the torus knot $T_{n,n+1}$ in $S^3$. Garside monoids have several important properties. Among them, the left- and right-divisibility relations equip $M_n$ with two lattice structures, and $M_n$ comes equipped with a distinguished element $\Delta_n$, called a~\defn{Garside element}, which has the following two properties
\begin{enumerate}
\item The set of left divisors of $\Delta_n$ coincides with its set of right divisors, and forms a finite set.
\item The set of left (or right) divisors of $\Delta_n$ generates $M_n$. 
\end{enumerate}
This Garside element is given by $\Delta_n=\rho_n^{n+1}$. In particular, as any Garside monoid is a lattice for both left- and right-divisibility, the set $\mathsf{Div}(\Delta_n)$ of left (or right) divisors of $\Delta_n$ is a finite lattice if equipped by the order relation given by the restriction of left- (or right-) divisibility on $M_n$. The set $\mathsf{Div}(\Delta_n)$ is the set of \defn{simple elements} or \defn{simples} of $M_n$. In general $(\mathsf{Div}(\Delta_n), \leq_L)$ and $(\mathsf{Div}(\Delta_n), \leq_R)$ will not be isomorphic as posets. But we always have $$(\mathsf{Div}(\Delta_n), \leq_L)\cong(\mathsf{Div}(\Delta_n), \leq_R)^{\mathrm{op}}$$ (see for instance~\cite[Lemma 2.19]{Gobet}; such a property holds in any Garside monoid). 

We will give a new proof that $(\mathsf{Div}(\Delta_n), \leq)$ (and hence $(\mathsf{Div}(\Delta_n), \leq_R)$ is a lattice, in a way which will exhibit a recursive structure of the poset. To this end, we will require (sometimes without mentioning it) a few basic results on the monoid $M_n$ which are either explained above or proven in~\cite{Gobet}:
\begin{enumerate}
\item The left- and right-divisibility relations on $M_n$ are partial orders. 
\item The monoid $M_n$ is both left- and right-cancellative, i.e., for $a, b, c\in M_n$, we have that $ab=ac\Rightarrow b=c$, and $ba=ca\Rightarrow b=c$ (see~\cite[Propositions 4.9 and 4.12]{Gobet}), 
\item The set of left- and right-divisors of $\Delta_n$ coincide. In fact, the element $\Delta_n$ is central in $M_n$, hence as $M_n$ is cancellative, for $a,b\in M_n$ such that $ab=\Delta_n$, we have $ab=ba$ (see~\cite[Proposition 4.15]{Gobet}) 
\end{enumerate}

\subsection{Lattice property}

The aim of this subsection is to prove a few properties of simple elements of $M_n$, and to derive a new algebraic proof that $\mathsf{Div}(\Delta_n)$ is a lattice. 

\begin{prop}\label{prop_decomposition}
Let $x_1 x_2\cdots x_k$ be a word for $\Delta_n$, with $x_i\in\mathcal{S}$ for all $i=1, \dots, k$. There are $i_1=1 < i_2 < \dots < i_\ell \leq k$ such that \begin{itemize}
\item For all $j=1, \dots, \ell$, the word $y_j:=x_{i_j} x_{i_j+1} \cdots x_{i_{j+1}-1}$ (with the convention that $i_{\ell+1}=k+1$) is a word for a power of $\rho_n$,
\item The decomposition $y_1|y_2|\cdots|y_\ell$ of the word $x_1 x_2 \cdots x_k$ is maximal in the sense that no word among the $y_j$ can be decomposed as a product of two nonempty words which are words for powers of $\rho_n$. 
\end{itemize}
Morever, a decomposition with the above properties is unique. 
\end{prop}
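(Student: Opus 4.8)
The plan is to read the decomposition directly off the word, by recording all positions at which a prefix already represents a power of $\rho_n$. Concretely, I would introduce the set
\[
B := \{\, p \in \{0,1,\dots,k\} \;:\; x_1 x_2 \cdots x_p \text{ represents a power of } \rho_n \text{ in } M_n \,\},
\]
with the convention that the empty prefix ($p=0$) represents $\rho_n^0=1$. Then $0\in B$ trivially, and $k\in B$ since the whole word represents $\Delta_n=\rho_n^{n+1}$. Writing $B=\{0=b_0<b_1<\cdots<b_\ell=k\}$, I would set $i_j:=b_{j-1}+1$ for $1\le j\le \ell$, so that $y_j=x_{b_{j-1}+1}\cdots x_{b_j}$, and claim this is the desired, and the only, maximal decomposition.

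First I would check that each $y_j$ is a word for a power of $\rho_n$. By definition of $B$ we have $x_1\cdots x_{b_{j-1}}=\rho_n^{a}$ and $x_1\cdots x_{b_j}=\rho_n^{a'}$ for some $a,a'\ge 0$. Since $b_{j-1}<b_j$ and the length function $\lambda$ is additive and strictly positive on nonempty words, we get $\lambda(\rho_n^{a'})>\lambda(\rho_n^{a})$, hence $a'>a$. Left-cancellativity of $M_n$ then yields $y_j=\rho_n^{a'-a}$ with $a'-a\ge 1$, so $y_j$ is genuinely a word for a nontrivial power of $\rho_n$.

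Next, maximality. Suppose some $y_j$ factors as a product of two nonempty words, each representing a power of $\rho_n$, say at an internal cut position $q$ with $b_{j-1}<q<b_j$. Then $x_1\cdots x_q$ is the product $\rho_n^{a}$ (the prefix up to $b_{j-1}$) times a power of $\rho_n$, hence itself a power of $\rho_n$, so $q\in B$; this contradicts $b_{j-1}$ and $b_j$ being consecutive in $B$. Thus no $y_j$ can be split, and the decomposition is maximal.

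Finally, uniqueness. Given any decomposition with the stated properties, each partial product $y_1\cdots y_j$ is a product of powers of $\rho_n$, hence itself a power of $\rho_n$, so every breakpoint of that decomposition lies in $B$. Conversely, if some element of $B$ were not a breakpoint, it would lie strictly inside one block $y_j$, and the same cancellation argument as above would exhibit a nontrivial splitting of $y_j$ into two words for powers of $\rho_n$, contradicting maximality. Therefore the breakpoints coincide exactly with $B$, and the maximal decomposition is forced to be the one constructed above. The only load-bearing inputs are left-cancellativity (to turn ``a prefix and a longer prefix are powers of $\rho_n$'' into ``the intervening block is a power of $\rho_n$'') and additivity of $\lambda$ (to rule out empty blocks and to fix the value of each exponent); I expect the cancellation step to be the one point worth stating carefully, the rest being bookkeeping.
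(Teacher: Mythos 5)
Your proof is correct and takes essentially the same route as the paper: both identify the breakpoints as exactly the positions where a prefix represents a power of $\rho_n$, and both lean on cancellativity (plus positivity of the length function) to show each resulting block is a nontrivial power of $\rho_n$ and that any finer or different cut would violate maximality. Your global description via the set $B$ is a slightly cleaner packaging of the paper's greedy left-to-right construction, but the underlying argument is the same.
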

\begin{proof}
The existence of the decomposition is clear using the fact that $M_n$ is cancellative: given the word $x_1 x_2 \cdots x_k$, consider the smallest $i\in\{1, 2, \dots, k\}$ such that $x_1 x_2 \cdots x_k$ is a word for a power of $\rho_n$. Such an $i$ has to exist, as $x_1 x_2\cdots x_k$ is a word for a power of $\rho_n$. Then set $i_2:=i+1$. By cancellativity in $M_n$, since $x_1 \cdots x_i$ and $x_1 \cdots x_k$ are both words for a power of $\rho_n$, the word $x_{i+1}\cdots x_k$ must also be a word for a power of $\rho_n$. Hence one can go on, arguing the same with the word $x_{i+1} \cdots x_k$. Again by cancellativity, this decomposition must be maximal.

Now assume that the decomposition is not unique, that is, assume that $y_1 | y_2| \cdots | y_\ell$ and $z_1 | z_2| \cdots | z_{\ell'}$ are two decompositions of the word $x_1 x_2\cdots x_k$ satisfying the properties of the statement. As both $y_1$ and $z_1$ are words for a power of $\rho_n$, if $y_1 \neq z_1$, then one word must be strict prefix of the other, say $z_1$ is a strict prefix of $y_1$. But this contradicts the maximality of the decomposition $y_1| y_2|\cdots | y_\ell$: indeed if $y_1= x_1 x_2 \cdots x_{i_2-1}$ and $z_1=x_1 x_2\cdots x_p$ with $p< i_2-1$, we can decompose $y_1$ nontrivially as $x_1 x_2\cdots x_p| x_{p+1} \cdots x_{{i_2}-1}$, and by cancellativity both $x_1 \cdots x_p$ and $x_{p+1} \cdots x_{i_2-1}$ are words for powers of $\rho_n$. 
\end{proof}

\begin{exple}
Consider the word $\rho_3 \rho_1 \rho_7 \rho_1 \rho_7 \rho_5 \rho_4 \rho_7 \rho_7 \rho_1 \rho_7 \rho_6$ in $M_7$. We claim that this is a word for the Garside element $\rho_7^8$ of $M_7$. Indeed, using the defining relation $\rho_1 \rho_7 \rho_i=\rho_{i+1} \rho_7$ with $i=5$ and $6$, we get that $$\rho_3 (\rho_1 \rho_7 \rho_1 \rho_7 \rho_5) \rho_4 \rho_7 \rho_7 (\rho_1 \rho_7 \rho_6)=\rho_3 \rho_7^3 \rho_4 \rho_7^4,$$ and we observe also applying defining relations that $$\rho_3 \rho_7^3 \rho_4 \rho_7^4=\rho_1 \rho_7 \rho_2 \rho_7^2 \rho_4 =\rho_1 \rho_7 \rho_1 \rho_7 \rho_1\rho_7 \rho_4=\rho_1 \rho_7 \rho_1 \rho_7 \rho_5\rho_7=\rho_1 \rho_7 \rho_6 \rho_7^2=\rho_7^4.$$

 The decomposition according to Proposition~\ref{prop_decomposition} is given by $$\underbrace{\rho_3 \rho_1 \rho_7 \rho_1 \rho_7 \rho_5 \rho_4 }_{:=y_1}|\underbrace{\rho_7}_{:=y_2}| \underbrace{\rho_7}_{:=y_3} |\underbrace{\rho_1 \rho_7 \rho_6}_{:=y_4}.$$ It is indeed clear by considering $\lambda(u)$ for $u$ prefixes of $y_1$ or $y_4$ that whenever $u$ is a proper prefix, we do not have $\lambda(u)$ equal to a multiple of $7$, which is a necessary condition for a word to represent a power of $\rho_7$. 
\end{exple}

\begin{lemma}\label{lem:atomes_gauche}
Let $1\leq k \leq n$. Then $$\mathcal{S}\cap \{x\in  \Div(\Delta_n) \ | \ x\leq \rho_k \rho_n^k\}=\{\rho_1, \rho_2, \dots, \rho_k\}.$$
\end{lemma}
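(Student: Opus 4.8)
The plan is to first replace $\rho_k\rho_n^k$ by the more symmetric expression $(\rho_1\rho_n)^k$. Writing the defining relation as $\rho_1\rho_n\rho_{m-1}=\rho_m\rho_n$ (the relation with $i=m-1$, valid for $2\le m\le n$), a straightforward induction on $m$ gives $(\rho_1\rho_n)^m=\rho_m\rho_n^m$ for all $1\le m\le n$; in particular $\rho_k\rho_n^k=(\rho_1\rho_n)^k$. The inclusion $\{\rho_1,\dots,\rho_k\}\subseteq\mathcal{S}\cap\{x\in\Div(\Delta_n)\mid x\leq\rho_k\rho_n^k\}$ is then immediate: for $1\le j\le k$ one has $\rho_j\leq\rho_j\rho_n^j=(\rho_1\rho_n)^j\leq(\rho_1\rho_n)^k=\rho_k\rho_n^k$, the middle inequality because $(\rho_1\rho_n)^k=(\rho_1\rho_n)^j(\rho_1\rho_n)^{k-j}$.

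For the reverse inclusion I would argue by contradiction. Suppose $\rho_j\leq\rho_k\rho_n^k$ with $k<j\le n$, and write $(\rho_1\rho_n)^k=\rho_j c$ with $c\in M_n$. Then $\rho_j\rho_n^j=(\rho_1\rho_n)^j=(\rho_1\rho_n)^k(\rho_1\rho_n)^{j-k}=\rho_j c\,(\rho_1\rho_n)^{j-k}$. Left-cancelling $\rho_j$ and using $(\rho_1\rho_n)^{j-k}=\rho_{j-k}\rho_n^{j-k}$ gives $\rho_n^j=c\,\rho_{j-k}\rho_n^{j-k}$, and right-cancelling $\rho_n^{j-k}$ gives $\rho_n^{k}=c\,\rho_{j-k}$. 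Hence $\rho_{j-k}\leq_{R}\rho_n^k$, where $1\le j-k\le n-k$. So the whole statement reduces to showing that every generator right-dividing $\rho_n^k$ has index $>n-k$.

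To prove the latter I would establish, by induction on $k$, the statement $S_R(k)$: \emph{if $\rho_p\leq_{R}\rho_n^k$ then $p\ge n-k+1$}. The base case $k=1$ is clear, since the only word for $\rho_n$ is $\rho_n$ itself, so $\rho_n$ is its only right-divisor among $\mathcal{S}$. For the inductive step, let $\rho_p\leq_{R}\rho_n^k$; if $p=n$ the bound is trivial, so assume $p\le n-1$. Both $\rho_p$ and $\rho_n$ right-divide $\rho_n^k$, so their right-lcm $\rho_p\vee_{R}\rho_n$ right-divides $\rho_n^k$. The key computation is that this right-lcm equals $\rho_{p+1}\rho_n$ (which is $\rho_1\rho_n\rho_p$ by the defining relation). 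Granting this, $\rho_{p+1}\rho_n\leq_{R}\rho_n^k$, and right-cancelling $\rho_n$ gives $\rho_{p+1}\leq_{R}\rho_n^{k-1}$; the induction hypothesis then forces $p+1\ge n-(k-1)+1$, i.e.\ $p\ge n-k+1$. Feeding this back, the element $\rho_{j-k}$ produced above would satisfy $j-k\ge n-k+1$, i.e.\ $j\ge n+1$, contradicting $j\le n$. This contradiction yields $\mathcal{S}\cap\{x\in\Div(\Delta_n)\mid x\leq\rho_k\rho_n^k\}\subseteq\{\rho_1,\dots,\rho_k\}$, completing the proof.

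The main obstacle is the right-lcm computation $\rho_p\vee_{R}\rho_n=\rho_{p+1}\rho_n$ for $1\le p\le n-1$; equivalently, writing $\rho_n^k=d\rho_p$, that the cofactor $d$ necessarily ends in $\rho_1\rho_n$. That $\rho_{p+1}\rho_n=\rho_1\rho_n\rho_p$ is a common right-multiple of $\rho_p$ and $\rho_n$ is clear from the defining relation; the content is its \emph{minimality}, i.e.\ that no shorter common right-multiple exists. I expect this to be the delicate point. It can be treated either through the lcm/word-reversing machinery available in the Garside monoid $M_n$, or directly by analysing how a letter $\rho_p$ with $p<n$ can occur at the end of a word for a power of $\rho_n$: such a suffix can only be created from a suffix $\rho_1\rho_n\rho_p$ via $\rho_1\rho_n\rho_p=\rho_{p+1}\rho_n$, and Proposition~\ref{prop_decomposition} lets one isolate the last non-decomposable block to make this rigorous. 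Throughout, the length function $\lambda$ and the left- and right-cancellativity of $M_n$ guarantee that the successive cancellations are legitimate.
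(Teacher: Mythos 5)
Your proposal is correct in outline and its first two thirds are sound, but it leaves an essential step unproved. The identity $(\rho_1\rho_n)^m=\rho_m\rho_n^m$, the easy inclusion, and the cancellation computation turning ``$\rho_j\leq\rho_k\rho_n^k$ with $k<j\leq n$'' into ``$\rho_{j-k}\leq_R\rho_n^k$'' are all valid; note that your statement $S_R(k)$ is precisely Lemma~\ref{lem:atomes_droite} of the paper, so you have in effect reduced the left-divisor statement to the right-divisor one, which is logically fine since the latter admits an independent proof. The gap is the inductive step of $S_R(k)$: you need that $\rho_p\leq_R\rho_n^k$ with $p<n$ forces $\rho_{p+1}\rho_n\leq_R\rho_n^k$, and you derive this from the identity $\rho_p\vee_R\rho_n=\rho_{p+1}\rho_n$, whose minimality part you explicitly acknowledge not to have established. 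Neither of your two suggested repairs is carried out, and the appeal to Proposition~\ref{prop_decomposition} would not do the job as stated: that proposition decomposes a word for $\Delta_n$ into blocks representing powers of $\rho_n$ and says nothing about which single letters can terminate a word for $\rho_n^k$.

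The missing step is exactly the combinatorial heart of the paper's argument, and it is handled there without any lcm computation: take a word for $\rho_n^k$ ending in $\rho_p$, connect it to the word $\rho_n\rho_n\cdots\rho_n$ by a chain of single applications of defining relations, and consider the first step at which the last letter changes. Since the only side of a defining relation ending in $\rho_p$ with $p<n$ is $\rho_1\rho_n\rho_p$ (whose partner is $\rho_{p+1}\rho_n$), the word at that stage must end in $\rho_1\rho_n\rho_p$, whence $\rho_{p+1}\rho_n\leq_R\rho_n^k$ directly; cancelling $\rho_n$ and invoking the induction hypothesis finishes. (The paper runs this very argument on the \emph{first} letter to prove the present lemma, and on the \emph{last} letter to prove Lemma~\ref{lem:atomes_droite}.) If you replace your lcm claim by this rewriting-chain argument, your proof closes up and becomes a legitimate, slightly reorganized alternative to the paper's; as written, the delicate point is asserted rather than proved.
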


\begin{proof}
We argue by induction on $k$. The result is clear for $k=1$, as no defining relation of $M_n$ can be applied to the word $\rho_1 \rho_n$. Now let $k>1$. Observe that $$\rho_k \rho_n^k = (\rho_1 \rho_n)^k= (\rho_1 \rho_n) (\rho_1 \rho_n)^{k-1}.$$ In particular we have $\rho_1\leq \rho_k \rho_n^k$ and by induction, we get $\rho_1 \rho_n \rho_i\leq \rho_k \rho_n^k$ for all $1\leq i \leq k-1$. As $\rho_1 \rho_n \rho_i=\rho_{i+1} \rho_n$ we get that $\{\rho_1, \rho_2, \dots, \rho_k\}\subseteq \mathcal{S}\cap \{x\in  \Div(\Delta_n) \ | \ x\leq \rho_k \rho_n^k\}$. 

It remains to show that no other $\rho_i$ can be a left-divisor of $\rho_k \rho_n^k$. Hence assume that $i>k$ and $\rho_i \leq \rho_k \rho_n^k$. Hence there is a word $x_1 x_2 \cdots x_p$ for $\rho_k \rho_n^k$, where $x_i\in\mathcal{S}$ for all $i$, such that $x_1=\rho_i$. As the words $x_1 x_2 \cdots x_p$ and $\rho_k \rho_n^k$ represent the same element, they can be related by a finite sequence of words $w_0=x_1 x_2 \cdots x_p, w_1, \dots, w_q=\rho_k \rho_n^k$, where each $w_i$ is a word with letters in $\mathcal{S}$ and $w_{i+1}$ is obtained from $w_i$ by applying a single relation somewhere in the word. As the first letter of $w_0$ differs from the first letter of $w_q$, there must exist some $0\leq \ell < q$ such that $w_\ell$ begins by $\rho_i$ but $w_{\ell+1}$ does not. It follows that the relation allowing one to pass from $w_\ell$ to $w_{\ell+1}$ has to be applied at the beginning of the word $w_\ell$. But the only possible relation with one side beginning by $\rho_i$ is $\rho_i \rho_n=\rho_1 \rho_n \rho_{i-1}$. It follows that $\rho_1 \rho_n \rho_{i-1}\leq   \rho_k \rho_n^k = (\rho_1 \rho_n)^k$. By cancellativity, we get that $\rho_{i-1}\leq (\rho_1 \rho_n)^{k-1}=\rho_{k-1} \rho_n^{k-1}$. By induction this forces one to have $i-1 \leq k-1$, contradicting our assumption that $i>k$.    \end{proof}

Similarly, we have

\begin{lemma}\label{lem:atomes_droite}
Let $1\leq k \leq n$. Then $$\mathcal{S}\cap \{x\in  \Div(\Delta_n) \ | \ x\leq_R \rho_n^k\}=\{\rho_n, \rho_{n-1}, \dots, \rho_{n-k+1}\}.$$
\end{lemma}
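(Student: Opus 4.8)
The plan is to prove the statement by induction on $k$, following the same strategy as for Lemma~\ref{lem:atomes_gauche} but adapted to right-divisibility (tracking the \emph{last} letter of a word rather than the first). Note that the anti-isomorphism $(\Div(\Delta_n),\leq_L)\cong(\Div(\Delta_n),\leq_R)^{\mathrm{op}}$ does not directly apply here, since the present statement concerns divisors of the fixed element $\rho_n^k$ rather than of $\Delta_n$, and the two divisibility relations are genuinely different; so I would argue directly. The base case $k=1$ is immediate: $\rho_n$ is a single letter, so no defining relation can be applied to it, whence $\rho_n$ is the unique generator right-dividing $\rho_n$, matching $\{\rho_{n-1+1}\}$.

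For the inductive step, assume the statement for $k-1$. I would first establish the inclusion $\supseteq$. Writing $\rho_n^k=\rho_n\cdot\rho_n^{k-1}$, any right-divisor of $\rho_n^{k-1}$ is automatically a right-divisor of $\rho_n^k$ (if $\rho_n^{k-1}=dx$ then $\rho_n^k=(\rho_n d)x$); by induction this already yields $\rho_n,\rho_{n-1},\dots,\rho_{n-k+2}\leq_R\rho_n^k$. The only new generator to handle is $\rho_{n-k+1}$. For this I would use the defining relation indexed by $i=n-k+1$, namely $\rho_1\rho_n\rho_{n-k+1}=\rho_{n-k+2}\rho_n$, together with the inductive fact that $\rho_{n-k+2}=\rho_{n-(k-1)+1}\leq_R\rho_n^{k-1}$. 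Writing $\rho_n^{k-1}=c\,\rho_{n-k+2}$ and right-multiplying by $\rho_n$ gives $\rho_n^k=c\,\rho_{n-k+2}\rho_n=c\,\rho_1\rho_n\rho_{n-k+1}$, exhibiting $\rho_{n-k+1}$ as a right-divisor of $\rho_n^k$.

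The reverse inclusion $\subseteq$ is where the main work lies, and I expect it to be the chief obstacle. Suppose $\rho_i\leq_R\rho_n^k$ with $i\leq n-k$ (so in particular $i<n$), aiming for a contradiction. Then there is a word $x_1\cdots x_p$ for $\rho_n^k$ ending in $x_p=\rho_i$, connected to the standard word $\rho_n\cdots\rho_n$ by single-relation moves $w_0=x_1\cdots x_p,\dots,w_q=\rho_n\cdots\rho_n$. Since the last letter of $w_0$ is $\rho_i$ and that of $w_q$ is $\rho_n\neq\rho_i$, some move changes the final letter away from $\rho_i$, and such a move must be applied at the end of the word. The key combinatorial observation is that the only side of a defining relation ending in $\rho_i$ (with $i\neq n$) is the left-hand side $\rho_1\rho_n\rho_i$ of the relation indexed by $i$; hence this move replaces a suffix $\rho_1\rho_n\rho_i$ by $\rho_{i+1}\rho_n$, giving $\rho_n^k=u\,\rho_1\rho_n\rho_i=u\,\rho_{i+1}\rho_n$, where $u$ is the element represented by the remaining prefix.

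Finally I would invoke right-cancellativity of $M_n$ to delete the trailing $\rho_n$, obtaining $\rho_n^{k-1}=u\,\rho_{i+1}$, that is $\rho_{i+1}\leq_R\rho_n^{k-1}$. By the induction hypothesis the generators right-dividing $\rho_n^{k-1}$ are exactly $\rho_n,\dots,\rho_{n-k+2}$, forcing $i+1\geq n-k+2$, i.e.\ $i\geq n-k+1$, contradicting $i\leq n-k$. This closes the induction. The delicate point throughout is the last-letter bookkeeping in the $\subseteq$ direction — justifying that a change of final letter forces precisely the relation $\rho_1\rho_n\rho_i=\rho_{i+1}\rho_n$ applied as a suffix — but this is the exact right-handed mirror of the first-letter argument already carried out in Lemma~\ref{lem:atomes_gauche}.
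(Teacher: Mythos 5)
Your proposal is correct and follows essentially the same route as the paper: induction on $k$, the defining relation $\rho_{n-k+2}\rho_n=\rho_1\rho_n\rho_{n-k+1}$ for the containment $\supseteq$ (the paper packages this as the closed identity $(\rho_1\rho_n)^{n-j}\rho_j=\rho_n^{n-j+1}$), and for $\subseteq$ the same last-letter rewriting argument mirroring Lemma~\ref{lem:atomes_gauche} followed by right-cancellation of $\rho_n$. The paper merely compresses the suffix-tracking step into ``arguing as in the proof of Lemma~\ref{lem:atomes_gauche}'', which you have spelled out correctly.
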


\begin{proof}
As for \Cref{lem:atomes_gauche}, we argue by induction on $k$. The result is clear for $k=1$. Hence assume that $k>1$. As $(\rho_1 \rho_n)^{n-j} \rho_j=\rho_n^{n-j+1}$, we get that $\rho_j \leq_R \rho_n^k$ for all $j$ such that $n-j+1\leq k$, that is, for all $j\geq n-k+1$. It remains to show that no other $\rho_j$ can right-divide $\rho_n^k$. Hence assume that $\rho_j\leq_R \rho_n^k$, where $j < n-k+1$. Arguing as in the proof of \Cref{lem:atomes_gauche}, we see that $\rho_1 \rho_n \rho_j = \rho_{j+1} \rho_n$ must be a right-divisor of $\rho_n^k$, hence by cancellativity that $\rho_{j+1}\leq_R \rho_n^{k-1}$. By induction this forces $j+1\geq n-k+2$, contradicting our assumtion that $j < n-k+1$.\end{proof} 

For $x\in\Div(\Delta_n)$, let $d(x):=\max\{ k\geq 0\ | \ \rho_n^k\leq x\}$. Let $0\leq i \leq n+1$ and let \[D_n^i:=\{x\in\Div(\Delta_n)~|~d(x)=i \}. \] Note that \[ \Div(\Delta_n)=\coprod_{0\leq i \leq n+1} D_n^i. \] We have $D_n^n=\{ \rho_n^n \}$, $D_n^{n+1}=\{ \Delta_n \}$.

\begin{lemma}\label{lemma_word}
Let $x\in \mathsf{Div}(\Delta_n)$ and $i=d(x)$. Let $x'\in M_n$ such that $x=\rho_n^i x'$. Note that $x'\in D_n^0$. Let $x_1 x_2 \cdots x_k$ be a word for $x$, where $x_i\in\mathcal{S}$ for all $i=1, \dots, k$. Then there is $1\leq \ell \leq k$ such that $x_1 x_2 \cdots x_\ell$ is a word for $\rho_n^i$ (and hence $x_{\ell+1} \cdots x_k$ is a word for $x'$ by cancellativity). In other words, any word for $x$ has a prefix which is a word for $\rho_n^i$.   
\end{lemma}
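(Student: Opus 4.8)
The plan is to locate, inside the given word $x_1\cdots x_k$, the longest prefix that already represents a power of $\rho_n$, and to show that its exponent is exactly $i$. Concretely, let $s\in\{0,1,\dots,k\}$ be the largest index for which $x_1\cdots x_s$ represents a power of $\rho_n$, say $x_1\cdots x_s=\rho_n^b$ (such an $s$ exists since $s=0$ gives $\rho_n^0$). Write $u:=x_{s+1}\cdots x_k$, so that $x=\rho_n^b u$. Since $\rho_n^b\leq x$ we get $b\leq d(x)=i$, and since $\rho_n^b u=x\leq\Delta_n=\rho_n^{n+1}$, left-cancellativity gives $u\leq\rho_n^{n+1-b}$, so $u\in\Div(\Delta_n)$ and $d(u)$ is defined. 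The decisive observation, again by left-cancellativity, is that $\rho_n^{b+t}\leq x$ if and only if $\rho_n^{t}\leq u$; hence $d(x)=b+d(u)$. Thus the whole statement reduces to the single claim $d(u)=0$: once this is known, $b=i$, and the prefix $x_1\cdots x_s$ (nonempty as soon as $i\geq 1$) represents $\rho_n^i$, as desired. The degenerate case $i=0$ is immediate, with $s=0$.

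By the maximality of $s$ together with left-cancellativity, no nonempty prefix $x_{s+1}\cdots x_m$ ($s<m\leq k$) of $u$ represents a power of $\rho_n$, for otherwise $x_1\cdots x_m$ would be a longer power-of-$\rho_n$ prefix. (Equivalently, one may complete $x$ to $\Delta_n$ and apply Proposition~\ref{prop_decomposition} to the resulting word: the complete blocks lying inside the first $k$ letters assemble to $\rho_n^b$, and $u$ is a proper prefix of the next block, which by maximality of the block decomposition carries no nonempty power-of-$\rho_n$ prefix.) So it suffices to prove that if an element $u\in\Div(\Delta_n)$ admits a word none of whose nonempty prefixes represents a power of $\rho_n$, then $\rho_n\nleq u$, i.e. $d(u)=0$. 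Suppose for contradiction $\rho_n\leq u$. Reading $x_{s+1}\cdots x_k$ from the left, let $p$ be the first position at which the prefix-element becomes left-divisible by $\rho_n$; writing $w$ for the prefix-element just before position $p$ and $\rho_a$ for the letter read at position $p$, we have $d(w)=0$ while $\rho_n\leq w\rho_a$, and $w\rho_a$ is a nonempty prefix of $u$.

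The crux, and the step I expect to be the main obstacle, is the following \emph{jump lemma}: if $d(w)=0$ and $\rho_n\leq w\rho_a$ for a single generator $\rho_a$, then $w\rho_a$ is itself a power of $\rho_n$. Granting this, $w\rho_a$ is a nonempty prefix of $u$ representing a power of $\rho_n$, contradicting the hypothesis on $u$; this contradiction gives $d(u)=0$ and finishes the proof. To establish the jump lemma one must control how $\rho_n$ can appear as a left-divisor at all: inspecting the defining relations $\rho_1\rho_n\rho_i=\rho_{i+1}\rho_n$, the first letter of a word can only be turned into $\rho_n$ by completing the left-hand side $\rho_1\rho_n\rho_{n-1}$ of the relation with $i=n-1$, which simultaneously exposes $\rho_n^2$ at the front. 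Iterating this analysis, and using the identity $(\rho_1\rho_n)^{e}=\rho_e\rho_n^{e}$ together with Lemma~\ref{lem:atomes_gauche} (whence $(\rho_1\rho_n)^{e}$ has $d=0$ for $e<n$), one expects to show that the only way right-multiplication by a single generator $\rho_a$ can create divisibility by $\rho_n$ out of an element $w$ with $d(w)=0$ is when $w=(\rho_1\rho_n)^{e-1}$ and $\rho_a=\rho_{n-e+1}$, in which case $w\rho_a=\rho_n^{e}$. This relation-level bookkeeping is the technical heart of the argument; everything else is a formal consequence of cancellativity and of the decomposition in Proposition~\ref{prop_decomposition}.
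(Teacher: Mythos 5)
Your reduction is sound as far as it goes: the identity $d(x)=b+d(u)$ via left-cancellativity, the passage to a suffix $u$ admitting a word with no nonempty power-of-$\rho_n$ prefix (either by maximality of $s$ or via Proposition~\ref{prop_decomposition}), and the localisation at the first position where $\rho_n$-divisibility appears are all correct. But the proof is not complete: the entire content of the statement has been compressed into the ``jump lemma'', which you state and do not prove. That lemma is essentially equivalent to the lemma under discussion (granting the lemma, apply it to the word $W\rho_a$: its $\rho_n^{e}$-prefix cannot sit inside $W$ because $d(w)=0$, so it must be the whole word), so one should expect it to be exactly as hard. Your sketch --- track the first rewriting step that brings $\rho_n$ to the front, necessarily $\rho_1\rho_n\rho_{n-1}\to\rho_n^2$ --- only shows that some \emph{intermediate word} for the element $w\rho_a$ has the form $\rho_1\rho_n\rho_{n-1}V$, hence that $\rho_n^2\leq w\rho_a$. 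To deduce from this anything about $w$ itself (for instance that $w=(\rho_1\rho_n)^{e-1}$) you would have to strip the final letter $\rho_a$ off that factorisation, i.e.\ control how an arbitrary sequence of relations interacts with the last letter of the word; that is precisely the difficulty the lemma is meant to resolve, and ``iterating this analysis \dots one expects to show'' is where the argument stops, with no induction set up that would close it.

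For comparison, the paper sidesteps the jump lemma entirely by running the argument in the opposite direction: it starts from the one word $\rho_n^i\cdot(\text{word for }x')$ that visibly has the desired prefix, and checks that the property is preserved under every application of a defining relation, because no relation can straddle the boundary between the two halves. Only three straddling configurations are possible ($\rho_1\rho_n\mid\rho_j$, $\rho_1\mid\rho_n\rho_j$, $\rho_{j+1}\mid\rho_n$); the last two would force a word for $x'$ to begin with $\rho_n$, contradicting $x'\in D_n^0$, and the first gives $\rho_1\leq_R\rho_n^{i-1}$, which Lemma~\ref{lem:atomes_droite} rules out unless $i=n+1$, in which case $x'=1$. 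If you want to keep your architecture, the honest fix is to prove the jump lemma by an induction of this ``no relation crosses the boundary'' type --- at which point you will have reproduced the paper's proof with extra scaffolding.
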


\begin{proof}
It suffices to show that if $z_1 z_2 \cdots z_p$ is an expression for $x$ such that $z_1 z_2 \cdots z_q$ is an expression for $\rho_n^i$ ($q\leq p$, then one cannot apply a defining relation of $M_n$ on the word $z_1 z_2 \cdots z_p$ simultaneously involving letters of the word $z_1 z_2 \cdots z_q$ and letters of the word $z_{q+1} \cdots z_p$. Let us consider the three possible cases where this could occur: one could have $\rho_1 \rho_n | \rho_j$, $\rho_1|\rho_n \rho_j$, or $\rho_{j+1}| \rho_n$ ($1\leq j < n$), where the $|$ separates the letters $z_q$ and $z_{q+1}$. The last two cases cannot happen, since one would have $z_{q+1}=\rho_n$, hence $z_{q+1} \cdots z_p$ would be a word for $x'$ beginning by $\rho_n$, contradicting the fact that $x'\in D_n^0$. It remains to show that the case $\rho_1 \rho_n|\rho_j$ cannot happen. Hence assume that $z_{q-1}=\rho_1, z_q=\rho_n, z_{q+1}=\rho_j$. By cancellativity, as $z_1 z_2\cdots z_q$ is a word for $\rho_n^i$, it implies that $\rho_1 \leq_R \rho_n^{i-1}$. By~\cref{lem:atomes_droite}, this implies that $n-(i-1)+1=1$, hence that $i=n+1$. Since $x\in \mathsf{Div}(\Delta_n)$ and $x=\rho_n^{n+1} x'=\Delta_n x'$, we get $x'=1$, contradicting the fact that $z_{q+1}=\rho_j$.     
\end{proof}

\begin{lemma}\label{rho_n_j_entre}
Let $i,j\in\{0, 1, \dots, n+1\}$, with $i\neq j$. Let $x\in D_n^i$, $y\in D_n^j$. Assume that $x\leq y$. Then $i<j$ and $x< \rho_n^j \leq y$.  
\end{lemma}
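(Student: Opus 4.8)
The plan is to first dispatch the inequality $i<j$ and then reduce the remaining assertion to a comparison of two prefixes of a single word for $y$. For the first part, since $x\in D_n^i$ we have $\rho_n^i\leq x$ by the definition of $d$, and combining this with $x\leq y$ and transitivity of $\leq$ gives $\rho_n^i\leq y$. Hence $i$ belongs to the set $\{k\geq 0 \mid \rho_n^k\leq y\}$ whose maximum is $d(y)=j$, so $i\leq j$; as $i\neq j$ by hypothesis, $i<j$. Note also that $\rho_n^j\leq y$ holds automatically, since $j=d(y)$ realizes that maximum, so the only substantive thing left to prove is $x\leq\rho_n^j$ (strictness will come for free at the end).

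To prove $x\leq\rho_n^j$ I would work in the free monoid $\mathcal{S}^{\ast}$ and exploit \Cref{lemma_word}. Writing $y=xc$ for some $c\in M_n$ (which is exactly what $x\leq y$ means), I choose a word $u$ for $x$ and a word $v$ for $c$, so that $uv$ is a word for $y$. Applying \Cref{lemma_word} to $y$ (whose $d$-value is $j\geq 1$), the word $uv$ admits a prefix $w$ which is a word for $\rho_n^j$. Now both $u$ and $w$ are prefixes of the same word $uv$, and the prefixes of a fixed word of $\mathcal{S}^{\ast}$ are totally ordered by the prefix relation, so $u$ and $w$ are comparable. If $w$ were a prefix of $u$, then writing $u=w\,r$ as words and passing to $M_n$ would give $\rho_n^j\leq x$, whence $d(x)\geq j>i$, contradicting $d(x)=i$. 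Therefore $u$ is a prefix of $w$; writing $w=u\,s$ as words and passing to $M_n$ yields $x\leq\rho_n^j$, as desired.

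For strictness, since $d(\rho_n^j)=j\neq i=d(x)$ we have $x\neq\rho_n^j$, so $x<\rho_n^j$, and together with $\rho_n^j\leq y$ this gives $x<\rho_n^j\leq y$. The argument uses only the definition of $d$, cancellativity (implicit when translating prefix relations on words into left-divisibility in $M_n$), and the total order on prefixes of a fixed word; no meet or join is needed. The one genuinely load-bearing input is \Cref{lemma_word}: the whole proof hinges on the fact that \emph{every} word for $y$ — in particular the specific word $uv$ produced from the divisibility $x\leq y$ — begins with a word for $\rho_n^j$. I expect the only points requiring care to be the invocation of \Cref{lemma_word} for this particular word and the observation that a $w$-is-a-prefix-of-$u$ configuration is incompatible with $d(x)=i<j$; everything else is routine bookkeeping.
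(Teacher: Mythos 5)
Your proof is correct and follows essentially the same route as the paper: both arguments rest on \Cref{lemma_word} and on comparing two prefixes of a single word (one a word for the smaller element, one a word for a power of $\rho_n$), ruling out one of the two possible orderings because it would contradict the value of $d$. The only difference is that the paper first cancels $\rho_n^i$ and applies \Cref{lemma_word} to $\rho_n^{j-i}y'$ rather than directly to $y$, which changes nothing of substance.
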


\begin{proof}
It is clear that $i<j$, since $\rho_n^i\leq y$ as $\rho_n^i\leq x$, hence $j < i$ would contradict $y\in D_n^j$. In particular $x < y$. Let $x',y'$ such that $x=\rho_n^i x'$ and $y=\rho_n^j y'$. Note that $x',y'$ both lie in $D_n^0$. Since $x\leq y$ and $M_n$ is cancellative, we get that $x'< \rho_n^{j-i} y'$. It implies that there exists a word $x_1 x_2 \cdots x_k$ for $\rho_n^{j-i} y'$ ($x_i\in\mathcal{S}$) and $1\leq \ell <  k$ such that $x_1 x_2\cdots x_\ell$ is a word for $x'$. Now by~\cref{lemma_word}, there is $0\leq \ell'\leq k$ such that $x_1 x_2\cdots x_{\ell'}$ is a word for $\rho_n^{j-i}$. If $\ell'\leq\ell$, then $\rho_n^{j-i}\leq x'$, contradicting the fact that $x'\in D_n^0$. Hence $\ell'>\ell$, and $x'<\rho_n^{j-i}$. Multiplying by $\rho_n^i$ on the left we get $x<\rho_n^j$.    
\end{proof}

\begin{lemma}\label{sorties}
Let $z_1, z_2\in D_n^i$. Let $1\leq k_1 < k_2 \leq n$ and assume that there are two cover relations $z_1 \leqdot \rho_n^{k_1}$, $z_2\leqdot \rho_n^{k_2}$ in $(\mathrm{Div}(\Delta_n), \leq)$. Then $z_1 < z_2$. 
\end{lemma}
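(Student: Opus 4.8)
The plan is to make the lower covers of each power $\rho_n^a$ completely explicit. Concretely, I want to show that for $1\le a\le n$ the element $\rho_n^a$ has exactly one lower cover in each level $D_n^i$, $0\le i\le a-1$, and that this cover is given by the closed formula $\rho_n^i(\rho_1\rho_n)^{a-1-i}$. Granting this description, the lemma is immediate: applying it to $a=k_1$ and to $a=k_2$ forces $z_1=\rho_n^i(\rho_1\rho_n)^{k_1-1-i}$ and $z_2=\rho_n^i(\rho_1\rho_n)^{k_2-1-i}$. Since $k_1<k_2$ we have $k_1-1-i<k_2-1-i$, hence $(\rho_1\rho_n)^{k_1-1-i}$ left-divides $(\rho_1\rho_n)^{k_2-1-i}$ and therefore $z_1\le z_2$; as the two elements have different $\lambda$-length (equivalently, they cover distinct powers of $\rho_n$) we conclude $z_1<z_2$.

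First I would pin down the shape of an arbitrary lower cover. If $z\leqdot\rho_n^a$, write $\rho_n^a=zc$ with $c\neq 1$. Should $c$ admit a \emph{proper} generator left-divisor $\rho_j$ (which happens whenever $c$ is not itself a single generator), then $z<z\rho_j<\rho_n^a$ by cancellativity, contradicting the covering relation. Hence $c$ must be a single generator $\rho_s$; that is, every lower cover of $\rho_n^a$ is the unique left factor $z$ in a factorization $\rho_n^a=z\rho_s$ for some generator $\rho_s$ right-dividing $\rho_n^a$. By \Cref{lem:atomes_droite} the admissible indices are exactly $s\in\{n-a+1,\dots,n\}$, so $\rho_n^a$ has at most $a$ lower covers, one for each such $s$.

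The heart of the argument is to compute the level $d(z)$ of each of these covers. Using the identity $(\rho_1\rho_n)^{n-s}\rho_s=\rho_n^{n-s+1}$ established in the proof of \Cref{lem:atomes_droite}, I can solve $\rho_n^a=z\rho_s$ explicitly, namely $z=\rho_n^{\,a-1-(n-s)}(\rho_1\rho_n)^{\,n-s}$ (one checks directly that $\rho_n^{\,a-1-(n-s)}(\rho_1\rho_n)^{\,n-s}\rho_s=\rho_n^{\,a-1-(n-s)}\rho_n^{\,n-s+1}=\rho_n^a$, and $a-1-(n-s)\ge 0$ since $n-s\le a-1$). Because $(\rho_1\rho_n)^{m}=\rho_m\rho_n^m$ with $m=n-s\le a-1\le n-1$, \Cref{lem:atomes_gauche} shows that $\rho_n$ does \emph{not} left-divide $(\rho_1\rho_n)^{n-s}$; a one-line cancellation argument (if $\rho_n^{i+1}\le\rho_n^i g$ then $\rho_n\le g$) then gives $d(z)=a-1-(n-s)$. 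As $s$ runs through $\{n-a+1,\dots,n\}$ this level runs bijectively through $\{0,1,\dots,a-1\}$, which simultaneously establishes uniqueness at each level and yields the announced normal form $\rho_n^i(\rho_1\rho_n)^{a-1-i}$ for the level-$i$ cover.

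The main obstacle is precisely this level computation: the point is to rewrite $z$ as $\rho_n^i$ times a level-$0$ element, and the product identities of \Cref{lem:atomes_gauche} and \Cref{lem:atomes_droite} are exactly the tool that makes this rewriting possible. It is also here that the hypothesis $k_2\le n$ becomes essential, since for $a=n+1$ one has $(\rho_1\rho_n)^n=\Delta_n$ and the covers of the top element $\Delta_n$ no longer obey the formula above. Everything else in the argument — the single-generator cofactor, and the final comparison of the two normal forms — is routine cancellation in $M_n$.
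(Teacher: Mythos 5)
Your proposal is correct and follows essentially the same route as the paper: both arguments reduce a cover $z\leqdot\rho_n^{a}$ to a factorization $z\rho_s=\rho_n^{a}$ with $\rho_s$ a single generator, invoke the identity $(\rho_1\rho_n)^{n-s}\rho_s=\rho_n^{n-s+1}$ from \Cref{lem:atomes_droite} to obtain the normal form $z=\rho_n^{\,i}(\rho_1\rho_n)^{\,a-1-i}$, use \Cref{lem:atomes_gauche} to see that $\rho_n$ does not left-divide $(\rho_1\rho_n)^{n-s}$ and hence compute $d(z)$, and then compare the two normal forms. Your only additions — the explicit justification that the cofactor in a cover must be a single generator, and the observation that the levels of the covers of $\rho_n^a$ run bijectively through $\{0,\dots,a-1\}$ — are harmless elaborations of steps the paper leaves implicit.
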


\begin{proof}
As $z_1 \leq \rho_n^{k_1}$, $z_2\leq \rho_n^{k_2}$ are cover relations, there are $1\leq j_1, j_2\leq n$ such that $z_1 \rho_{j_1}=\rho_n^{k_1}$, $z_2 \rho_{j_2}=\rho_n^{k_2}$. By~\cref{lem:atomes_droite}, for $\ell\in\{1,2\}$ we have $j_\ell\in\{n-k_\ell+1, \dots, n\}$ and \[ \rho_n^{k_\ell}=\rho_n^{j_\ell+k_\ell-1-n}(\rho_1 \rho_n)^{n-j_\ell} \rho_{j_\ell}. \]
In particular, we have $$z_\ell=\rho_n^{j_\ell+k_\ell-1-n}(\rho_1 \rho_n)^{n-j_\ell}$$ and as $(\rho_1 \rho_n)^{n-j_\ell}=\rho_{n-j_\ell} \rho_n^{n-j_\ell}$, by~\cref{lem:atomes_gauche} we see that $\rho_n$ cannot be a left divisor of $(\rho_1 \rho_n)^{n-j_\ell}$, and hence that $d(z_\ell)=j_\ell+k_\ell-1-n$. But $d(z_\ell)=i$ for $\ell\in\{1,2\}$, and since $k_1 < k_2$ we deduce that $j_1 > j_2$. Since $z_\ell=\rho_n^i (\rho_1 \rho_n)^{n-j_\ell}$, we get that $z_1 < z_2$, which concludes the proof.  
\end{proof}

\begin{theorem}\label{thm_lattice_main}
The poset $(\Div(\Delta_n), \leq)$ is a lattice. Given $i,j\in\{0,1,\dots, n+1\}$ with $i\leq j$ and $x\in D_n^i$, $y\in D_n^j$, we have \[ x \wedge y=
        x \wedge_i \left(\bigvee_i\{z\in D_n^i \ | \ z\leq y\}\right), \] where $\vee_i$ and $\wedge_i$ denote the meet and join on the restriction of the left-divisibility order on $D_n^i$, which itself forms a lattice. Note that if $i=j$ we simply get $x\wedge y=x \wedge_i y$. 
\end{theorem}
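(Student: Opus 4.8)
The plan is to prove that $(\Div(\Delta_n),\leq)$ is a lattice by induction on $n$, establishing the asserted meet formula along the way. Since $\Div(\Delta_n)$ is finite and has $\Delta_n$ as greatest element, it suffices to produce a meet $x\wedge y$ for every pair $x\in D_n^i$, $y\in D_n^j$ with $i\leq j$; joins then exist automatically. I take for granted that each level $D_n^i$ is itself a lattice: this is the content of \Cref{dni_trees}, which identifies $D_n^i$ with a smaller lattice $P_{m}$, $m<n$, so that the induction hypothesis applies (the strict inequality $m<n$ being forced by the Fibonacci count, as each $D_n^i$ is a proper subposet of $\Div(\Delta_n)$). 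Throughout, $\wedge_i$ and $\bigvee_i$ denote meet and join in $D_n^i$, whose order is the restriction of $\leq$.

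The first step is a reduction. Fixing $y\in D_n^j$ and $i\leq j$, set $S_i(y):=\{z\in D_n^i\mid z\leq y\}$. Using \Cref{rho_n_j_entre} one checks that $S_i(y)=\{z\in D_n^i\mid z\leq \rho_n^j\}$: if $z\in D_n^i$ satisfies $z\leq y$ with $i<j$ then $z<\rho_n^j$ by that lemma, while conversely $z\leq\rho_n^j\leq y$ gives $z\leq y$. In particular $S_i(y)$ depends only on $i$ and $j$, and it is nonempty since $\rho_n^i\leq\rho_n^j\leq y$.

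The technical heart of the argument, and the step I expect to be the main obstacle, is to show that $S_i(y)$ has a greatest element, equivalently that $Y_i:=\bigvee_i S_i(y)$ satisfies $Y_i\leq y$. I would prove this by induction on $j$ (for $j\leq n$), analysing the elements covered by $\rho_n^j$. Every cover relation $z\leqdot\rho_n^j$ has the form $z\rho_a=\rho_n^j$ with $\rho_a\leq_R\rho_n^j$, and \Cref{lem:atomes_droite} together with the computation of $d(z)$ in the proof of \Cref{sorties} shows that $\rho_n^j$ covers exactly one element at each level $0,1,\dots,j-1$; \Cref{sorties} moreover tells us that these level-$i$ covered elements, as $\rho_n^k$ varies, are totally ordered and increase with $k$. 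Given $z\in D_n^i$ with $z\leq\rho_n^j$, take a saturated chain from $z$ up to $\rho_n^j$; its penultimate element $w\leqdot\rho_n^j$ has $d(w)\geq i$. If $d(w)=i$ then $w$ is the unique level-$i$ element covered by $\rho_n^j$, and $z\leq w$. If $d(w)>i$, then $\Cref{rho_n_j_entre}$ gives $z\leq\rho_n^{d(w)}$ with $d(w)<j$, so the induction hypothesis puts $z$ below the level-$i$ element covered by $\rho_n^{d(w)}$, which by \Cref{sorties} lies below the one covered by $\rho_n^j$. Either way $z$ is bounded above by a single level-$i$ divisor of $\rho_n^j$, which is therefore $\max S_i(y)$. (For $j=n+1$, i.e.\ $y=\Delta_n$, one argues more directly: $S_i(\Delta_n)=D_n^i$ has a greatest element simply because $D_n^i$ is a finite lattice.)

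With $Y_i=\max S_i(y)$ in hand, I claim $x\wedge y=x\wedge_i Y_i=:m$. Since $m\leq x$ and $m\leq Y_i\leq y$, the element $m$ is a common lower bound of $x$ and $y$. Conversely let $w$ be any common lower bound; as $w\leq x$ we have $d(w)\leq i$. If $d(w)=i$, then $w\in S_i(y)$, so $w\leq Y_i$ and $w\leq x$ with all three elements in $D_n^i$, whence $w\leq m$ because $m$ is their meet in $D_n^i$. If instead $d(w)<i$, then \Cref{rho_n_j_entre} gives $w<\rho_n^i$; but $\rho_n^i$ belongs to $S_i(y)$ and satisfies $\rho_n^i\leq x$, so $\rho_n^i\leq m$ and therefore $w<\rho_n^i\leq m$. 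In both cases $w\leq m$, so $m$ is the greatest lower bound. This proves the displayed formula and, pair by pair, the lattice property; when $i=j$ the set $S_i(y)$ has greatest element $y$, and the formula collapses to $x\wedge y=x\wedge_i y$, as stated.
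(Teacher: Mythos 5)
Your proposal is correct and follows essentially the same route as the paper: induction on $n$ via Proposition~\ref{dni_trees}, reduction of $\{z\in D_n^i\mid z\leq y\}$ to $\{z\in D_n^i\mid z\leq \rho_n^j\}$ via Lemma~\ref{rho_n_j_entre}, and Lemma~\ref{sorties} to control the level-$i$ elements sitting under $\rho_n^j$, followed by the same lower-bound verification. The only (harmless) variation is in the middle step: you exhibit a greatest element of $\{z\in D_n^i\mid z\leq\rho_n^j\}$ by a secondary induction on $j$ using the penultimate element of a saturated chain, whereas the paper observes that the maximal elements of this set are each covered by some $\rho_n^k$ and hence totally ordered by Lemma~\ref{sorties}, so the join lies in the set.
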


\begin{proof}
The proof is by induction on $n$. We have $\mathsf{Div}(\Delta_0)=\{\bullet\}$, and $\mathsf{Div}(\Delta_1)=\{1, \rho_1, \rho_1^2\}$, which is a lattice. Hence assume that $n \geq 2$. By Proposition~\ref{dni_trees} below, the restriction of the left-divisibility to $D_n^i$ yields an isomorphism of poset with $\mathsf{Div}(\Delta_{n-i})$ if $i\neq 0, n+1$, while the restriction to $D_n^0$ yields an isomorphism of poset with $\mathsf{Div}(\Delta_{n-1})$, and the restriction to $D_n^{n+1}$ an isomorphism of posets with $\mathsf{Div}(\Delta_0)=\{\bullet\}$. In particular, by induction, all these posets are lattices. As the poset $(\Div(\Delta_n), \leq)$ is finite and admits a maximal element, it suffices to show that $x \wedge y$ as defined by the formula above is indeed the join of $x$ and $y$. 

It is clear that $x \wedge y \leq x$. Let us show that $x\wedge y \leq y$. If $i=j$ this is clear, hence assume that $i<j$. By~\cref{rho_n_j_entre} we see that $$\bigvee_i\{z\in D_n^i \ | \ z\leq y\}=\bigvee_i\{z\in D_n^i \ | \ z\leq \rho_n^j\}.$$
It suffices to check that $\bigvee_i\{z\in D_n^i \ | \ z\leq \rho_n^j\}\leq \rho_n^j$. Note that $$\bigvee_i\{z\in D_n^i \ | \ z\leq \rho_n^j\}=\bigvee_i\{z\in D_n^i \ | \ z\leq \rho_n^j\text{~and~}(z\leqdot x \leq \rho_n^j\Rightarrow x\notin D_n^i)\}.$$
Now by~\cref{rho_n_j_entre}, if $z\in D_n^i$ and $x$ is any element such that $z\leqdot x \leq \rho_n^j$ and $x\notin D_n^i$, then $x=\rho_n^{k}$ for some $k$ (necessarily smaller than or equal to $j$). It implies that $$\bigvee_i\{z\in D_n^i \ | \ z\leq \rho_n^j\}=\bigvee_i\{z\in D_n^i \ | \ z\leq \rho_n^j\text{~and~}z\leqdot \rho_n^k\text{~for some~}k\leq j\}.$$
By~\cref{sorties}, we have that $$\bigvee_i\{z\in D_n^i \ | \ z\leq \rho_n^j\text{~and~}z\leqdot \rho_n^k\text{~for some~}k\leq j\}$$ has to be an element of the set $\{z\in D_n^i \ | \ z\leq \rho_n^j\text{~and~}z\leqdot \rho_n^k\text{~for some~}k\leq j\}$, hence that it is in particular a left-divisor of $\rho_n^j$ (and hence of $y$).

Now assume that $u\leq x,y$. We can assume that $u\in D_n^i$, otherwise by~\cref{rho_n_j_entre} we have $u< \rho_n^i\leq x \wedge y$. As $u\leq y$, we have that $u\leq \bigvee_i\{z\in D_n^i \ | \ z\leq y\}$. And hence, that $u\leq x \wedge_i \left(\bigvee_i\{z\in D_n^i \ | \ z\leq y\}\right)=x\wedge y$.     
\end{proof}

\section{Schröder trees and words for the Garside element}\label{sec_Schröder}

\subsection{labelling of Schröder trees}

A \defn{rooted plane tree} is a tree embedded in the plane with one distinguished vertex called the \defn{root}. The vertices of degree $1$ are called the \defn{leaves} of the tree and the other vertices are called \defn{inner vertices}. One can consider rooted trees as directed graphs by orienting the edges from the root toward the leaves. If there is an oriented edge from a vertex $v$ to a vertex $w$, we say that $v$ is the \defn{parent} of $w$ and $w$ is a \defn{child} of $v$. As can be seen in \Cref{fig:Schroeder_trees}, we draw the trees with their root on the top and the leaves on the bottom. The planar embedding induces a total ordering (from left to right) on the children of each vertex, hence we can speak about the leftmost child of a vertex. 

Alternatively one has a useful recursive definition of a rooted plane tree: it is either the empty tree with no inner vertex and a single leaf or a tuple $T = (r,\mathrm{Tr})$ where $r$ is the root vertex and $\mathrm{Tr}$ is an ordered list of rooted plane trees. If $T$ is a tree with the first definition, the vertex $r$ is its root and the list $\mathrm{Tr}$ is the list of subtrees, ordered from left to right, obtained by removing the root $r$ and all the edges adjacent to $r$ in $T$. 

\begin{definition}
{\noindent}
\begin{enumerate}
\item A \defn{Schröder tree} is a rooted plane tree in which each inner vertex has at least two children.
\item A \defn{binary tree} is a rooted plane tree in which each inner vertex has exactly two children.
\item The \defn{size} of a tree is its number of leaves.
\item The \defn{height} of a tree is the number of vertices in a maximal chain of descendants. 
\item The Schröder tree on $n$ leaves in which every child of the root is a leaf is called the~\defn{Schröder bush}. We denote it by $\delta_n$. 
\item The Schröder tree given by the binary tree in which every right child (resp. every left child) is a leaf is called a~\defn{left comb} (resp. a~\defn{right comb}). 
\end{enumerate}
\end{definition}
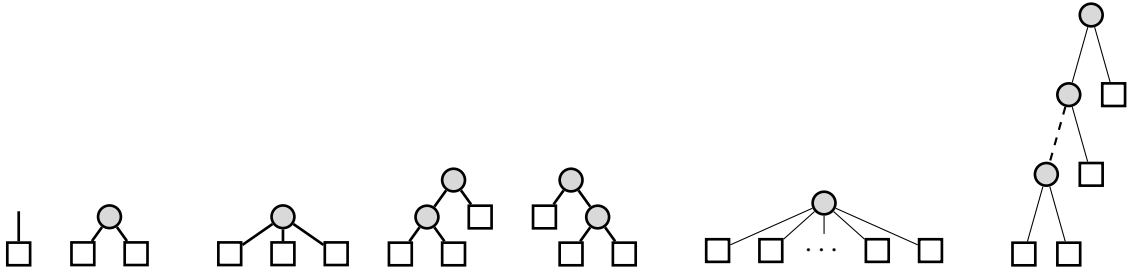
\begin{figure}[h]
\centering
\begin{tikzpicture}[scale = 0.7,
   level distance=10mm,
   level 1/.style={sibling distance=20mm},
   level 2/.style={sibling distance=15mm},
   level 3/.style={sibling distance=6mm},
   inner/.style={circle,draw=black,fill=black!15,inner sep=0pt,minimum size=4mm,line width=1pt},
   root/.style={},
   leaf/.style={rectangle,draw=black,inner sep=0pt,minimum size=3mm,line width=1pt},                      
   edge from parent/.style={draw,line width=1pt}]
  \node[root]{}
  	child {node [leaf] {}};
\end{tikzpicture}
\quad
\begin{tikzpicture}[scale = 0.7,
   level distance=7mm,
   level 1/.style={sibling distance=10mm},
   level 2/.style={sibling distance=15mm},
   level 3/.style={sibling distance=6mm},
   inner/.style={circle,draw=black,fill=black!15,inner sep=0pt,minimum size=3mm,line width=1pt},
   leaf/.style={rectangle,draw=black,inner sep=0pt,minimum size=3mm,line width=1pt},                      
   edge from parent/.style={draw,line width=1pt}]
  \node[inner]{}
  	child {node [leaf] {} }
	child {node [leaf] {}};
\end{tikzpicture}
\qquad
\begin{tikzpicture}[scale = 0.7,
   level distance=7mm,
   level 1/.style={sibling distance=10mm},
   level 2/.style={sibling distance=15mm},
   level 3/.style={sibling distance=6mm},
   inner/.style={circle,draw=black,fill=black!15,inner sep=0pt,minimum size=3mm,line width=1pt},
   leaf/.style={rectangle,draw=black,inner sep=0pt,minimum size=3mm,line width=1pt},                      
   edge from parent/.style={draw,line width=1pt}]
  \node[inner]{}
  	child {node [leaf] {}}
	child {node [leaf] {}}
	child {node [leaf] {}};
\end{tikzpicture}
\quad
\begin{tikzpicture}[scale = 0.7,
   level distance=7mm,
   level 1/.style={sibling distance=10mm},
   level 2/.style={sibling distance=10mm},
   level 3/.style={sibling distance=6mm},
   inner/.style={circle,draw=black,fill=black!15,inner sep=0pt,minimum size=3mm,line width=1pt},
   leaf/.style={rectangle,draw=black,inner sep=0pt,minimum size=3mm,line width=1pt},                      
   edge from parent/.style={draw,line width=1pt}]
  \node[inner]{}
  	child {node [inner] {}
		child {node [leaf] {}}
		child {node [leaf] {}}
		}
	child {node [leaf] {}};
\end{tikzpicture}
\quad 
\begin{tikzpicture}[scale = 0.7,
   level distance=7mm,
   level 1/.style={sibling distance=10mm},
   level 2/.style={sibling distance=10mm},
   level 3/.style={sibling distance=6mm},
   inner/.style={circle,draw=black,fill=black!15,inner sep=0pt,minimum size=3mm,line width=1pt},
   leaf/.style={rectangle,draw=black,inner sep=0pt,minimum size=3mm,line width=1pt},                      
   edge from parent/.style={draw,line width=1pt}]
  \node[inner]{}
  	child {node [leaf] {}}
  	child {node [inner] {}
		child {node [leaf] {}}
		child {node [leaf] {}}
		};
\end{tikzpicture}
\qquad
\begin{tikzpicture}[scale = 0.7,
   level distance=9mm,
   level 1/.style={sibling distance=10mm},
   level 2/.style={sibling distance=10mm},
   level 3/.style={sibling distance=5mm},
   inner/.style={circle,draw=black,fill=black!15,inner sep=0pt,minimum size=3mm,line width=1pt},
   leaf/.style={rectangle,draw=black,inner sep=0pt,minimum size=3mm,line width=1pt},                      
   cdots/.style={}                
   edge from parent/.style={draw,line width=1pt}] 
  \node[inner]{}
		child {node [leaf] {}}
		child {node [leaf] {}}
		child {node [cdots] {$\cdots$}}
		child {node [leaf] {}}
		child {node [leaf] {}};
\end{tikzpicture}
\qquad
\begin{forest}
  [,circle,draw=black,fill=black!15,inner sep=0pt,minimum size=3mm,line width=1pt, edge={line,very thick}
   [,circle,draw=black,fill=black!15,inner sep=0pt,minimum size=3mm,line width=1pt
    [,circle,draw=black,fill=black!15,inner sep=0pt,minimum size=3mm,line width=1pt,edge={dashed,thick}
    [,rectangle,draw=black,inner sep=0pt,minimum size=3mm,line width=1pt]
    [,rectangle,draw=black,inner sep=0pt,minimum size=3mm,line width=1pt]
    ]
     [,rectangle,draw=black,inner sep=0pt,minimum size=3mm,line width=1pt]
    ]
    [,rectangle,draw=black,inner sep=0pt,minimum size=3mm,line width=1pt]
     ]
\end{forest}
\caption{From left to right: the unique Schröder tree with $1$ leaf, the unique Schröder tree with two leaves, the three Schröder trees with $3$ leaves. Then the Schröder bush and on its right a left comb.}
\label{fig:Schroeder_trees}
\end{figure} 
The Schröder trees are counted by the so-called \defn{little Schröder numbers}. The sequence starts with $1, 1, 3, 11, 45, 197, 903, 4279,20793,...$ and is referred as \href{https://oeis.org/A001003}{A001003} in~\cite{oeis}.

We will label (and read the labels of) the vertices and the leaves of our trees using the so-called \emph{post-order traversal}. This is a recursive algorithm that visits each vertex and leaf of the tree exactly once. Concretely, if $T = \big(r,(T_1,\dots, T_k)\big)$ is a rooted planar tree, then we recursively apply the algorithm to $T_1$, $T_2$ until $T_k$ and finally we visit the root $r$.  When the algorithm meets an empty tree it visits its leaf and then, the recursion stops and it goes up one level in the recursive process.  The first vertex visited by the algorithm is the leftmost leaf of $T$, then the algorithm moves to its parent $v$ (but does not visit $v$) and visits the second subtree of $v$ starting with the leftmost leaf and so on. We refer to \Cref{fig:post_order} for an illustration where the first vertex visited by the algorithm is labeled by $1$, the second by $2$ and so on. The last vertex visited by the algorithm is always the root of $T$. 
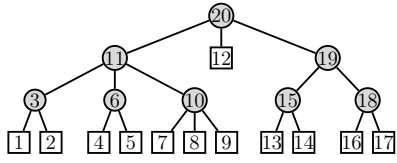
\begin{figure}[h]
\centering
\scalebox{.7}{\begin{tikzpicture}
  [ level distance=8mm,
   level 1/.style={sibling distance=20mm},
   level 2/.style={sibling distance=15mm},
   level 3/.style={sibling distance=6mm},
   inner/.style={circle,draw=black,fill=black!15,inner sep=0pt,minimum size=4mm,line width=1pt},
   leaf/.style={rectangle,draw=black,inner sep=0pt,minimum size=4mm,line width=1pt},                      
   edge from parent/.style={draw,line width=1pt}]
  \node [inner] {20}
     child {node [inner] {11}
       child {node [inner] {3}
         child {node [leaf] {1}}
         child {node [leaf] {2}}
       }
       child {node [inner] {6}
         child {node [leaf] {4}}
         child {node [leaf] {5}}
       }
       child {node [inner] {10}
         child {node [leaf] {7}}
         child {node [leaf] {8}}
         child {node [leaf] {9}}
       }
     }
     child {node [leaf] {12}}
     child {node [inner] {19}
       child {node [inner] {15}
         child {node [leaf] {13}}
         child {node [leaf] {14}}
       }
       child {node [inner] {18}
         child {node [leaf] {16}}
         child {node [leaf] {17}}
       }
     };
\end{tikzpicture}}
\caption{Post-order traversal of a Schröder tree of size 12.}
\label{fig:post_order}
\end{figure}
Let $m,n$ be two integers such that $m\geq n-1$. We then label a Schröder tree $T$ with $n\geq 2$ leaves by labelling its vertices one after the other with respect to the total order defined by the post-order traversal, using the following rules: 
\begin{enumerate}
\item Let $v$ be the leftmost child of a vertex $w$. Then $w$ is the root of a Schröder tree $\big(w,(T_1,\cdots,  T_k)\big)$ and $v$ is the root of $T_1$.  The label $\lambda(v)$ of $v$ is equal to the number of leaves of the forest consisting of all the trees $T_2,\cdots ,T_k$.  
\item If $v$ is not the leftmost child of a vertex of $T$, we consider $LD(v)$ the set of its leftmost descendants consisting of the leftmost child of $v$ and its leftmost child and so one. Then the  label of $v$ is $m - \sum_{w\in LD(v)} \lambda(w).$ Note that using the post-order traversal, the label of the leftmost descendants of a vertex $v$ are already determined when we visit $v$. 
\end{enumerate}

The result is a labelled Schröder tree that we denote by $L_{m}(T)$. This procedure is illustrated in \Cref{fig:label_tree}. 

\begin{definition}
Let $L_{m}(T)$ be a labelled Schröder tree with $n$ leaves labelled by $m\geq n-1$. The sum of the labels of the vertices of $T$ is called its \defn{weight} (with respect to $m$). 
\end{definition}

\begin{lemma}
Let $T$ be a Schröder tree with $n$ leaves and $m\geq n-1$. Then the integers labelling $L_{m}(T)$ are strictly nonnegative with the exception of the root which may be labelled by $0$. 
\end{lemma}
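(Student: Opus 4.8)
The plan is to partition the vertices of $T$ according to which of the two labelling rules applies to them, and to observe that this partition is exactly: the leftmost children (rule (1)), versus everything else, i.e.\ the non-leftmost children together with the root, which is the leftmost child of no vertex (rule (2)). I would first dispose of rule (1), which is immediate. If $v$ is the leftmost child of a vertex $w$, then $w$ is an inner vertex, and since $T$ is a Schröder tree $w$ has at least two children. Writing the subtree rooted at $w$ as $(w,(T_1,\dots,T_k))$ with $v$ the root of $T_1$, we have $k\geq 2$, so the forest $T_2,\dots,T_k$ is nonempty and contains at least one leaf; by rule (1), $\lambda(v)$ equals the number of leaves of this forest, whence $\lambda(v)\geq 1$. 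In particular no leftmost child is the root, so strict positivity is already settled for this class.

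The core of the argument is a telescoping identity controlling the sum appearing in rule (2). For a vertex $v$, write $\nu(v)$ for the number of leaves of the subtree rooted at $v$, and let $v=c_0,c_1,\dots,c_p$ be the chain of leftmost descendants, so $c_i$ is the leftmost child of $c_{i-1}$ and $c_p$ is the leftmost leaf of the subtree rooted at $v$; thus $LD(v)=\{c_1,\dots,c_p\}$. Each $c_i$ with $i\geq 1$ is a leftmost child, so its label is given by rule (1): $\lambda(c_i)$ counts the leaves of the sibling subtrees of $c_i$, which is precisely $\nu(c_{i-1})-\nu(c_i)$. Summing over $i$ telescopes, and using $\nu(c_p)=1$ (as $c_p$ is a leaf) gives
\[ \sum_{w\in LD(v)}\lambda(w)=\nu(c_0)-\nu(c_p)=\nu(v)-1. \]
Hence rule (2) yields the closed form $\lambda(v)=m-\nu(v)+1$ valid for every non-leftmost child and for the root.

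With this formula the conclusion follows at once. If $v$ is a non-leftmost child, it has a left sibling whose subtree contains at least one leaf, so $\nu(v)\leq n-1$; therefore $\lambda(v)=m-\nu(v)+1\geq m-(n-1)+1=m-n+2\geq 1$, using the hypothesis $m\geq n-1$. If $v$ is the root then $\nu(v)=n$ and $\lambda(v)=m-n+1\geq 0$, with equality exactly when $m=n-1$. Combining the three classes, every label is at least $1$ except possibly the root's, which is at least $0$, as required.

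The step I expect to be the main obstacle is setting up the telescoping identity cleanly: correctly extracting from rule (1) that the label of each leftmost descendant $c_i$ equals the leaf-count drop $\nu(c_{i-1})-\nu(c_i)$, and being careful that the chain $LD(v)$ terminates at (and includes) the leftmost leaf, so that the telescoped sum lands on $\nu(v)-1$ rather than $\nu(v)$. Once that identity is established, the positivity bounds are routine consequences of the Schröder condition (every inner vertex has at least two children) and of the hypothesis $m\geq n-1$.
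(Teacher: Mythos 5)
Your proof is correct and follows essentially the same route as the paper's: both arguments rest on the observation that the labels of the vertices in $LD(v)$ count pairwise disjoint sets of leaves below $v$ and omit exactly the leftmost leaf. Your telescoping identity $\sum_{w\in LD(v)}\lambda(w)=\nu(v)-1$ sharpens the paper's inequality $\bigl(\sum_{w\in LD(v)}\lambda(w)\bigr)+1\leq n$ into the closed form $\lambda(v)=m-\nu(v)+1$, but the positivity conclusions (and the identification of the root as the only vertex that can receive the label $0$) are drawn in the same way.
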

\begin{proof}
If a vertex is a leftmost child, then its label is a number of leaves,  hence it is positive. If $v$ is not a leftmost child, then it is labelled by $m - \sum_{w\in LD(v)} \lambda(w)$. Each $\lambda(w)$ is equal to a certain number of leaves of $T$ and the set of leaves associated to distinct vertices of $LD(v)$ do not intersect. Moreover, exactly one element of $LD(v)$ is a leaf and this leaf is not counted in $\sum_{w\in LD(v)} \lambda(w)$. We therefore have \begin{equation}\label{eq_1}\left(\sum_{w\in LD(v)} \lambda(w)\right)+1\leq n,\end{equation} 
hence $m-\sum_{w\in LD(v)}\lambda(w) \geq 0$. Moreover if $\sum_{w\in LD(v)} \lambda(w) = m$, then by~\eqref{eq_1} we must have $m=n-1$. It follows that $v$ has $n$ descendants since the leftmost leaf which is a descendant of $v$ is not counted, hence $v$ is the root of $T$. 
\end{proof}
This labelling is almost determined by the recursive structure of the tree, as shown by the following result. 
\begin{lemma}\label{lem:inductive_label}
Let $T = \big(r,(T_1,\dots, T_k) \big)$ be a Schröder tree and $v$ be a vertex of $T_i$ for $i\in \{1,\dots,k\}$. Then,
\begin{enumerate}
\item If $v$ is not the root of $T_1$, then its label in $L_{m}(T)$ is equal to its label in $L_{m}(T_i)$. 
\item If $v$ is the root of $T_1$, then its label in $L_{m}(T_1)$ is equal to the sum of the labels of $v$ and of the root of $T$ in $L_{m}(T)$. 
\end{enumerate}
\end{lemma}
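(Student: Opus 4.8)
Throughout write $\lambda_T(v)$ for the label of a vertex $v$ in $L_{m}(T)$. The plan is to unwind the two labelling rules and to check, case by case, that the data consulted by the rule labelling $v$ is unchanged when we pass between $T$ and the relevant subtree. The one fact I would isolate and use repeatedly is that every vertex $w\in LD(v)$ is, by construction of the leftmost-descendant chain, a leftmost child; hence each such $w$ is labelled by rule~(1), so that $\lambda(w)$ is the number of leaves of the forest of right siblings of $w$, a quantity involving only the subtree rooted at the parent of $w$.

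For part~(1) I would distinguish according to which rule labels $v$. Since $v$ is not the root of $T_1$, whenever $v$ is not itself the root of $T_i$ its parent lies in $T_i$, so that $v$ is a leftmost child in $T$ if and only if it is one in $T_i$, and the same rule then applies in both trees. If $v$ is a leftmost child, then $v$ is necessarily not the root of $T_i$, so its parent and all of its right siblings lie in $T_i$, and rule~(1) returns the same leaf count in $L_{m}(T)$ and $L_{m}(T_i)$. If $v$ is not a leftmost child, then rule~(2) gives $\lambda(v)=m-\sum_{w\in LD(v)}\lambda(w)$ in both trees; the chain $LD(v)$ depends only on the subtree rooted at $v$, which is identical in $T$ and $T_i$, and by the preliminary observation each term $\lambda(w)$ is a right-sibling leaf count whose relevant parent again lies in $T_i$, so all terms agree and $\lambda_T(v)=\lambda_{T_i}(v)$. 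This also settles the case $v$ the root of $T_i$ with $i\ge 2$: there $v$ is a non-leftmost child of $r$ in $T$ and the root in $T_i$, yet it is labelled by rule~(2) through the very same chain $LD(v)$ in both, so the change in its sibling structure is irrelevant.

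For part~(2) let $v$ be the root of $T_1$. In $T$ the vertex $v$ is the leftmost child of $r$, so rule~(1) gives $\lambda_T(v)$ as the number of leaves of $T_2,\dots,T_k$, while $r$, being the root, is labelled by rule~(2). The structural point is that the leftmost-descendant chain of $r$ decomposes as $LD(r)=\{v\}\cup LD(v)$, whence
\[
\lambda_T(r)=m-\lambda_T(v)-\sum_{w\in LD(v)}\lambda_T(w).
\]
In $T_1$, on the other hand, $v$ is the root and is labelled by rule~(2) through the same chain $LD(v)$, and by part~(1) the labels of the vertices of $LD(v)$ (proper descendants of $v$, hence distinct from the root of $T_1$) coincide in $L_{m}(T)$ and $L_{m}(T_1)$; thus $\lambda_{T_1}(v)=m-\sum_{w\in LD(v)}\lambda_T(w)$. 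Adding the displayed identity to $\lambda_T(v)$ cancels the two $\lambda_T(v)$ contributions and yields $\lambda_T(v)+\lambda_T(r)=\lambda_{T_1}(v)$, as required.

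The work here is essentially bookkeeping, and the main thing to get right is matching, in each case, the labelling rule used in $T$ with the one used in the subtree, together with verifying that the sets $LD(v)$ and the right-sibling forests they invoke are genuinely unchanged by deleting $r$. The only non-formal input is the decomposition $LD(r)=\{v\}\cup LD(v)$, which is precisely what produces the additive correction term $\lambda_T(r)$ in part~(2); once it is in hand, no induction is needed beyond feeding part~(1) into the computation of part~(2).
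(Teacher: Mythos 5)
Your proof is correct and follows essentially the same route as the paper's: both arguments check that the leaf counts (rule (1)) and the chains $LD(v)$ (rule (2)) consulted by the labelling are unchanged when passing to the subtree, and both derive part (2) from the decomposition $LD(r)=\{v\}\cup LD(v)$. Your write-up is slightly more explicit about the case where $v$ is the root of $T_i$ for $i\geq 2$, but the substance is identical.
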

\begin{proof}
Let $v$ be a vertex of $T_i$. If $v$ is a leftmost child in $T$ which is not the root of $T_1$, then its label is a number of leaves of a certain forest which is contained in $T_i$. Hence this number is the same in the big tree $T$ or in the extracted tree $T_i$. If $v$ is not a leftmost child, then its label is determined by the labels of its leftmost descendants, hence it is the same in the tree $T$ as in the extracted tree $T_i$ since we have just shown that the labels of leftmost descendants which are not the root of $T_1$ agree. The root of $T_1$ has a different behaviour since in $T$ it is a leftmost child and this is not the case in $T_1$. Hence if $v$ is the root of $T_1$, denoting by $\lambda_1$ the label of $v$ in $T_1$, we have $\lambda_1(v)=m- \sum_{w\in LD(v)} \lambda_1(w)$. The labels of the descendants of $v$ are the same in $T$ and in $T_1$, that is, we have $\lambda_1(w)=\lambda(w)$ for all $w\in LD(v)$. In $T$, the label of the root $r$ is given by $$\lambda(r)=m-\lambda(v)-\sum_{w\in LD(v)} \lambda(w)=m-\lambda(v)-\sum_{w\in LD(v)} \lambda_1(w).$$ Hence we have $\lambda(r)+\lambda(v) = \lambda_1(v)$. 
\end{proof}

\subsection{Words for the Garside element in terms of Schröder trees}

 Reading the labelled tree $L_{m}(T)$ using the post-order traversal and associating the generator $\rho_i$ to the letter $i$ with the convention that $\rho_0 = e$, gives a map $\Phi_{m}$ from the set of Schröder trees labelled by $m$ to the set $S^\star$ of words for the elements of the monoid $M_m$. We refer to \Cref{fig:label_tree} for an illustration.  
 
 \begin{figure}[h]
\centering
\scalebox{.7}{\begin{tikzpicture}
  [ level distance=8mm,
   level 1/.style={sibling distance=20mm},
   level 2/.style={sibling distance=15mm},
   level 3/.style={sibling distance=6mm},
   inner/.style={circle,draw=black,fill=black!15,inner sep=0pt,minimum size=4mm,line width=1pt},
   leaf/.style={rectangle,draw=black,inner sep=0pt,minimum size=4mm,line width=1pt},                      
   edge from parent/.style={draw,line width=1pt}]
  \node [inner] {0}
     child {node [inner] {5}
       child {node [inner] {5}
         child {node [leaf] {1}}
         child {node [leaf] {11}}
       }
       child {node [inner] {10}
         child {node [leaf] {1}}
         child {node [leaf] {11}}
       }
       child {node [inner] {9}
         child {node [leaf] {2}}
         child {node [leaf] {11}}
         child {node [leaf] {11}}
       }
     }
     child {node [leaf] {11}}
     child {node [inner] {8}
       child {node [inner] {2}
         child {node [leaf] {1}}
         child {node [leaf] {11}}
       }
       child {node [inner] {10}
         child {node [leaf] {1}}
         child {node [leaf] {11}}
       }
     };
\end{tikzpicture}}
\caption{Example of the labelling of a Schröder tree of size 12 with $m=11$. The corresponding element in the monoid $M_{11}$ is $\rho_1 \rho_{11} \rho_5 \rho_1 \rho_{11}\rho_{10}\rho_2 \rho_{11} \rho_{11} \rho_9 \rho_5 \rho_{11} \rho_1 \rho_{11} \rho_2 \rho_{1} \rho_{11} \rho_{10} \rho_8.$}
\label{fig:label_tree}
\end{figure}
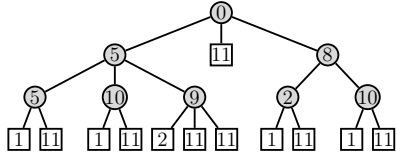

\begin{definition}
Let $T$ be a non-empty Schröder tree. If $T$ has a subtree $T_1$ satisfying the three following properties: 
\begin{enumerate}
\item The root $r_1$ of $T_1$ is not the root of $T$, hence it has a parent $r_0$ which has at least two children, 
\item The root $r_1$ has exactly two children, 
\item The right subtree of $T_1$ is the empty tree with only one leave.
\end{enumerate}
Then, we can construct another tree $\widetilde{T}$ by contracting the edge $r_0 - r_1$, in other words by removing the root $r_1$ of $T_1$ and attaching the two subtrees of $T_1$ to $r_0$. See \Cref{fig:local_move} for an illustration. We call such a transformation, or the inverse transformation, a~\defn{local move}.

Note that, since $r_0$ has at least two children in the configuration described above (see also the left picture in~\Cref{fig:local_move}), we get that $r_0$ has at least three children in the configuration obtained after applying the local move. In particular, to apply a local move in the other direction, we need to have a Schröder tree $\widetilde{T}$ with a subtree $T_1$ satisfying : \begin{enumerate}
	\item The parent $r_0$ of $T_1$ (which is allowed to be the root of $T$) has at least three children, 
	\item The tree $T_1$ is not the last child of $r_0$, and is directly followed by an empty tree with only one leaf. 
\end{enumerate}

\end{definition}

\begin{figure}[h!]
\centering
\scalebox{.7}{\begin{tikzpicture}
  [ level distance=8mm,
   level 1/.style={sibling distance=30mm},
   level 2/.style={sibling distance=25mm},
   level 3/.style={sibling distance=20mm},
   inner/.style={circle,draw=black,fill=black!15,inner sep=0pt,minimum size=4mm,line width=1pt},
   leaf/.style={rectangle,draw=black,inner sep=0pt,minimum size=4mm,line width=1pt},  
   dots/.style={star,draw=black,inner sep=0pt,minimum size=8mm, text width=7mm,align=center, line width=1pt,fill=black!15},  
   root/.style={}               
   edge from parent/.style={draw,line width=1pt}]

\node [root] {}
     child {node[inner] {$r_0$}
     		child{node[dots] {$S_k$}}
		child{node[inner]{$r_1$}
			child{ node[inner] {$r_2$} 
				child{node[dots] {$A_1$} edge from parent[draw=none] }}
			child{node[leaf] {}}
			}
		child{node[dots] {$S_{k+2}$}}
       };
\end{tikzpicture}}
\quad $\longleftrightarrow$
\quad
\scalebox{.7}{\begin{tikzpicture}
  [ level distance=15mm,
   level 1/.style={sibling distance=20mm,level distance=8mm},
   level 2/.style={sibling distance=25mm,level distance=8mm},
   inner/.style={circle,draw=black,fill=black!15,inner sep=0pt,minimum size=4mm,line width=1pt},
   leaf/.style={rectangle,draw=black,inner sep=0pt,minimum size=4mm,line width=1pt},  
   dots/.style={star,draw=black,inner sep=0pt,minimum size=8mm, text width=7mm,align=center, line width=1pt,fill=black!15},  
   root/.style={}               
   edge from parent/.style={draw,line width=1pt}]

\node [root] {}
     child {node[inner] {$r_0$}
     		child{node[dots] {$S_k$}}
		child{ node[inner] {$r_2$} 
				child{node[dots] {$A_1$} edge from parent[draw=none] }}
		child{node[leaf] {}}
		child{node[dots] {$S_{k+2}$}}
       };
\end{tikzpicture}}
\caption{Local move.}\label{fig:local_move}
\end{figure}
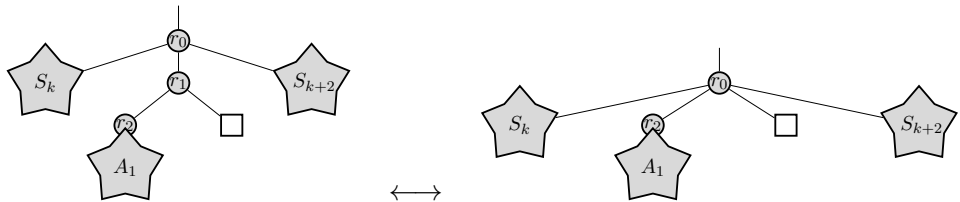

 Formally, if the subtree of $T$ with root $r_0$ is $S = \big(r_0, (S_1,\cdots, S_k,T_1,S_{k+2},\cdots, S_r)\big)$ and the subtree $T_1$ is $\big(r_1,(A_1, A_2)\big)$, then we obtain the tree $\widetilde{T}$ by replacing $S$ by 
 \[ \big(r_0, (S_1,\cdots, S_k,A_1, A_2,S_{k+2},\cdots, S_r)\big). \]

 \begin{lemma}\label{lem:local_move}
Let $T$ and $S$ be two Schröder trees with $n$ leaves. Then one can pass from the tree $T$ to the tree $S$ by applying a sequence of local moves. 
\end{lemma}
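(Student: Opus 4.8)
The plan is to exploit the reversibility of local moves: it suffices to fix one reference tree on $n$ leaves and to show that every Schröder tree with $n$ leaves can be brought to it, since then $T\to(\text{reference})\to S$ connects any pair. I would take the reference to be the Schröder bush $\delta_n$ and prove, by strong induction on $n$, that every Schröder tree with $n$ leaves is connected to $\delta_n$. The base case $n=1$ (a single leaf, which is $\delta_1$) is vacuous. For $n\geq 2$ write $T=\big(r,(T_1,\dots,T_k)\big)$ with $k\geq 2$ and $T_i$ a Schröder tree on $n_i<n$ leaves. The key preliminary observation is that any sequence of local moves carried out entirely inside one subtree $T_i$ is also a sequence of local moves of $T$: the configuration defining a move only involves a vertex, its parent, and that parent's children, the root $r$ of $T$ is never affected, and the numerical conditions (a parent with at least two, resp. at least three, children) are the same whether read in $T_i$ or in $T$.

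Using the induction hypothesis inside each subtree, I would first transform each $T_i$ into the left comb on $n_i$ leaves, which I denote $K_{n_i}$; this is legitimate because, by induction, all trees on $n_i$ leaves lie in a single connected component, so in particular $T_i$ is connected to $K_{n_i}$. This reduces $T$ to $\big(r,(K_{n_1},\dots,K_{n_k})\big)$. The reason the left comb is the correct target is that, by definition, every right child of a left comb is a leaf; hence the root of $K_m$ ($m\geq 2$) has exactly two children, the right one being a leaf, and its left subtree is again a left comb $K_{m-1}$. This is exactly the configuration a contraction requires. Thus, whenever some child $C$ of $r$ is not a leaf it is a left comb $K_m$ with $m\geq 2$, its root has two children with the right one a leaf, and its parent $r$ has $k\geq 2$ children, so the edge from $r$ to the root of $C$ may be contracted. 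Each such contraction replaces the child $K_m$ by the two children $K_{m-1}$ and a single leaf, leaves every remaining non-leaf child of $r$ still a left comb, and strictly decreases the number of inner vertices lying below $r$. Iterating until no non-leaf child of $r$ remains yields $\delta_n$, completing the induction.

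The step I expect to be the main obstacle, and the reason a naive argument fails, is that one cannot simply contract edges to reduce $T$ to the bush while monotonically decreasing the number of inner vertices. For instance the tree whose root has the bush $\delta_{n-1}$ as first child and a single leaf as second child admits \emph{no} contraction at all: its only non-root inner vertex is the root of $\delta_{n-1}$, which has $n-1$ children (at least three once $n\geq 4$), so no contraction applies, and one is forced to first \emph{expand}. Consequently the number of inner vertices need not be monotone along a connecting path. Routing through the left comb is precisely what circumvents this: converting a subtree to $K_{n_i}$ may temporarily increase the number of inner vertices, but it makes the subtree's root binary with a leaf on the right, after which it can be peeled into $r$ one edge at a time. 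The only remaining points, both routine, are the locality of the inner-subtree moves (noted in the first paragraph) and termination of the dissolving process, which is clear since the number of inner vertices below $r$ is a nonnegative integer strictly decreasing at each contraction.
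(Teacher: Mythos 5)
Your proof is correct and follows essentially the same strategy as the paper's: reduce to connecting every tree to the Schröder bush $\delta_n$, induct on the number of leaves, and use the left comb as the intermediate shape whose root can be repeatedly contracted into the root of $T$. The only (immaterial) difference is that you convert each subtree directly to a left comb, whereas the paper first normalizes subtrees to bushes and then invokes the induction hypothesis again to reach the left comb; your explicit remarks on the locality of moves inside subtrees and on termination make the argument, if anything, slightly more careful than the original.
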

\begin{proof}
It is enough to show that $T$ can be transformed into the Schröder bush $\delta_n$--recall that this is the Schröder tree in which every child of the root is a leaf--by a sequence of local moves. The Schröder tree $S$ can then be transformed as well into $\delta_n$, and hence $T$ can be transformed into $S$. We argue by induction on the number of leaves. For $n=1$ and $n=2$ there is nothing to prove. If $T$ has a subtree $S$ which is not of the form $\delta_k$, then we can transform $S$ into $\delta_k$ for some $k$ by applying the induction hypothesis to $S$. Hence we can assume that $T = \big(r,(\delta_{n_1},\cdots, \delta_{n_k})\big)$ with $\sum n_i =n$. If $T$ is not equal to $\delta_n$, then it has at least a non-empty subtree $S$. If $S$ has only two leaves, then we can apply a local move to remove its root and to attach the two leaves to the root of $T$. If it has more than $3$ leaves, by induction there is a sequence of local moves from $S$ to a left comb. Then, by repeatedly applying a local move at the root of the left comb, we remove all the inner vertices of the left comb and attach all its leaves to the root of $T$. Applying this to all subtrees $S$ of $T$ wich are not empty, we end up getting $\delta_n$.  
\end{proof}

\begin{lemma}\label{lem:b_def}
Let $T$ be a Schröder tree with $n$ leaves and $m\geq n-1$. Then $\Phi_{m}(T)$ is a word for $\rho_{n-1} (\rho_{m})^{n-1} \rho_{m-n+1}$ in $M_{m}$. In particular if $m = n-1$, then it is a word for the Garside element of $M_{n-1}$. 
\end{lemma}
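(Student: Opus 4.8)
The plan is to prove the statement by induction on the number $n$ of leaves, after isolating the two algebraic identities that are already implicit in the proofs of Lemmas~\ref{lem:atomes_gauche} and~\ref{lem:atomes_droite}. Written in $M_m$ (replacing $n$ by $m$ there), these are
\begin{align*}
\rho_k\,\rho_m^{k} &= (\rho_1\rho_m)^k \tag{I1}\\
(\rho_1\rho_m)^{k-1}\rho_{m-k+1} &= \rho_m^{k} \tag{I2}
\end{align*}
both valid for $1\le k\le m$. Combining them, the target word rewrites as $\rho_{n-1}\rho_m^{n-1}\rho_{m-n+1}=(\rho_1\rho_m)^{n-1}\rho_{m-n+1}$; this equals $\rho_m^{n}$ when $n\le m$, while for $m=n-1$ one has $\rho_{m-n+1}=\rho_0=e$ and the element is $\rho_{n-1}^{n}=\Delta_{n-1}$, so the ``in particular'' clause will be automatic once the main identity is proved. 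I considered instead reducing everything to the Schröder bush $\delta_n$ via Lemma~\ref{lem:local_move}, but checking that a local move preserves the represented element requires tracking label changes that are not purely local (the status of $A_1$'s root as a leftmost child may change); the inductive route is cleaner because Lemma~\ref{lem:inductive_label} pins down exactly the one label that moves under extraction.

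A preliminary computation I would record is that \emph{the root of any Schröder tree with $N\ge 2$ leaves carries the label $m-N+1$ in $L_m$}. Indeed the root is not a leftmost child, so its label is $m-\sum_{w}\lambda(w)$, the sum running over the chain $LD$ of successive leftmost descendants of the root. Each such $w$ is a leftmost child, so $\lambda(w)$ counts the leaves lying in the subtrees rooted at its younger siblings; as $w$ runs through $LD$ these leaf-sets are pairwise disjoint and exhaust every leaf of the tree except the leftmost one. Hence the sum equals $N-1$, and the root label is $m-N+1$.

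For the induction, the base case $n=2$ is immediate: the only such tree reads as $\rho_1\rho_m\rho_{m-1}$, which is the claimed word. For the inductive step write $T=\big(r,(T_1,\dots,T_k)\big)$ with $k\ge 2$ and let $n_i$ be the number of leaves of $T_i$, so $\sum_i n_i=n$. Reading $T$ in post-order, the vertices of $T_i$ for $i\ge 2$ keep the labels they have in $L_m(T_i)$ by Lemma~\ref{lem:inductive_label}(1), so each such block spells $\Phi_{m}(T_i)$, which by the induction hypothesis together with (I1) and (I2) represents $\rho_m^{n_i}$ (a single non-leftmost leaf likewise reads as $\rho_m=\rho_m^{1}$). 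The only label that genuinely changes between $T$ and the extracted tree $T_1$ is that of the root of $T_1$: by Lemma~\ref{lem:inductive_label}(2) and the root-label value above, its standalone label $b'=m-n_1+1$ becomes $b=b'-(m-n+1)=n-n_1$ inside $T$. Writing $\Phi_{m}(T_1)=u\,\rho_{b'}$, and using that $\Phi_{m}(T_1)$ represents $\rho_m^{n_1}=(\rho_1\rho_m)^{n_1-1}\rho_{m-n_1+1}$ by (I2), right-cancellativity of $M_m$ shows that $u$ represents $(\rho_1\rho_m)^{n_1-1}$.

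Assembling the pieces, $\Phi_{m}(T)$ represents
\[
(\rho_1\rho_m)^{n_1-1}\,\rho_{n-n_1}\,\rho_m^{n_2}\cdots\rho_m^{n_k}\,\rho_{m-n+1},
\]
the last letter $\rho_{m-n+1}=\rho_{\lambda(r)}$ coming from the root $r$. Since $n_2+\cdots+n_k=n-n_1$, the middle part collapses to $\rho_{n-n_1}\rho_m^{n-n_1}=(\rho_1\rho_m)^{n-n_1}$ by (I1), and the whole product becomes $(\rho_1\rho_m)^{n-1}\rho_{m-n+1}=\rho_{n-1}\rho_m^{n-1}\rho_{m-n+1}$, again by (I1), closing the induction. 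The step I expect to require the most care is precisely the bookkeeping at the root of $T_1$: it is the unique vertex whose label differs between $T$ and $T_1$, and one must combine Lemma~\ref{lem:inductive_label}(2), the exact root-label value, and right-cancellativity to identify the corrected letter $\rho_{n-n_1}$. One must also keep every index inside $[1,m]$ so that (I1) and (I2) apply (here $n_i\le n-1\le m$ and $n-n_1\le n-1\le m$ suffice), and handle the degenerate cases of leaf-subtrees ($n_i=1$, where $u$ is empty) and the boundary $m=n-1$ via the convention $\rho_0=e$.
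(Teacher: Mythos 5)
Your proof is correct, but it takes a genuinely different route from the paper's. The paper proves the lemma by reduction to the Schröder bush: it checks the bush directly, invokes Lemma~\ref{lem:local_move} to connect any tree to the bush by local moves, and then verifies that each local move changes the read word by exactly one defining relation of $M_m$ (analysing how the label of $r_2$ in Figure~\ref{fig:local_move} transforms). You instead do a structural induction on the recursive decomposition $T=\big(r,(T_1,\dots,T_k)\big)$, using Lemma~\ref{lem:inductive_label} to isolate the unique label that differs between $T$ and the extracted $T_1$, the identities $\rho_k\rho_m^{k}=(\rho_1\rho_m)^{k}$ and $(\rho_1\rho_m)^{k-1}\rho_{m-k+1}=\rho_m^{k}$ (which are indeed the ones observed in Lemmas~\ref{lem:atomes_gauche} and~\ref{lem:atomes_droite}, valid for $1\le k\le m$ --- and all your indices do stay in range since $n_i\le n-1\le m$), and right-cancellativity to identify the corrected letter $\rho_{n-n_1}$. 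Both arguments are sound; I checked your root-label computation ($m-N+1$), the base case, and the degenerate single-leaf subtrees. What the paper's route buys is the local-move/relation dictionary itself, which is reused verbatim in Proposition~\ref{prop:surj} and Corollary~\ref{cor_graph}, so their proof does double duty; your route buys a self-contained argument that never needs Lemma~\ref{lem:local_move} and delivers Lemma~\ref{lem:wordsubtree} (each block $T_i$, $i\ge 2$, represents $\rho_m^{n_i}$) as a free byproduct of the induction, at the cost of leaning on cancellativity of $M_m$, which the tree-rewriting proof avoids entirely.
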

\begin{proof}
If $T = \delta_n$ is the Schröder tree with only one root and $n$ leaves, then $\Phi_m(T) = \rho_{n-1} (\rho_m)^{n-1} \rho_{m-n+1}$. If $T$ is another Schröder tree, then by \Cref{lem:local_move} there is a sequence of local moves from $T$ to $\delta_n$. To finish the proof it is enough to show that applying a local move to a Schröder tree $T$ amounts to applying a relation of the monoid $M_m$ to $\Phi_m(T)$. This is easily obtained by staring at~\Cref{fig:local_move}.

Indeed, if $T$ is the tree at the left of \Cref{fig:local_move}, then the label of $r_2$ is $1$, the label of the leaf on its right is $m$ and the label of $r_1$ is a certain integer $\ell$. Since $r_1$ is not the root of $T$, we have $1\leq \ell$. Moreover, since $r_1$ is not a leaf of $T$, we have $\ell<m$. Hence in $\Phi_{m}(T)$ we have the factor $\rho_{1} \rho_m \rho_{\ell}$ with $1\leq \ell \leq m-1$. 

If $\widetilde{T}$ denotes the right tree of \Cref{fig:local_move}, then the label of $r_2$ is $\ell+1$. Indeed $r_2$ is a leftmost child in $\widetilde{T}$ if and only if $r_1$ is a leftmost child in $T$. In this case its label is the number of leaves of the forest in its right and in $\widetilde{T}$ there is precisely one more leaf in this forest than in $T$.  In the other case, the label of $r_2$ in $\widetilde{T}$ is $m - \sum_{w\in LD(r_2)} \lambda(w)$. The label of $r_1$ is $\ell = m - 1 - \sum_{w \in LD(r_2)} \lambda(w)$. So the label of $r_2$ is $\ell+1$. The leaf on the right of $r_2$ in $\widetilde{T}$ is labelled by $m$, hence $\Phi_{m}(\widetilde{T})$ is obtained by replacing $\rho_{1} \rho_m \rho_{\ell}$ in $\Phi_m(T)$ by $\rho_{\ell+1}\rho_m$, and vice-versa.  \end{proof}

\begin{prop}\label{prop:surj}
For $m=n-1$, the map $\Phi_{m}$ from the set of Schröder trees with $n$ leaves to the set of words for $\rho_{n-1}^n$ in $M_{n-1}$ is surjective.
\end{prop}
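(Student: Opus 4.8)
The plan is to deduce surjectivity from the correspondence, already available in \Cref{lem:b_def}, between local moves on a Schröder tree and defining relations of the monoid. Write $W$ for the set of words over $\mathcal{S}$ representing $\rho_{n-1}^n$ in $M_{n-1}$, and recall the standard fact that two words represent the same element of a presented monoid if and only if they are connected by a finite sequence of defining relations; thus $W$ is exactly the connected component of the word $\rho_{n-1}^n$ in the graph whose edges are single relations. Since $\Phi_{n-1}(\delta_n)=\rho_{n-1}^n$ and, by \Cref{lem:b_def}, $\mathrm{Im}(\Phi_{n-1})\subseteq W$, it suffices to prove that $\mathrm{Im}(\Phi_{n-1})$ is \emph{stable} under applying a single defining relation in either direction. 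Granting this, and since $\mathrm{Im}(\Phi_{n-1})$ contains $\rho_{n-1}^n$ while $W$ is connected under single relations, a straightforward induction along any relation path from $\rho_{n-1}^n$ to a given $w\in W$ (lifted step by step through trees, starting from $\delta_n$) shows $W\subseteq\mathrm{Im}(\Phi_{n-1})$, hence equality.

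To prove stability, recall from \Cref{lem:b_def} that a local move on $T$ (in either direction) replaces a factor $\rho_1\rho_{m}\rho_\ell$ of $\Phi_{m}(T)$ by $\rho_{\ell+1}\rho_{m}$, with here $m=n-1$, i.e.\ it realizes exactly the defining relations. So the entire content is to show that \emph{every} occurrence of a factor $\rho_1\rho_{m}\rho_\ell$ (with $1\le\ell\le m-1$), respectively $\rho_{\ell+1}\rho_{m}$ (with $1\le\ell\le m-1$), inside a word $\Phi_{n-1}(T)$ already sits at a local-move, respectively reverse-local-move, configuration of $T$; this is the step I expect to be the main obstacle, as one must rule out \emph{spurious} occurrences which are legal rewrites of the word but do not arise from any local move. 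The combinatorial input I would isolate first is the refinement of \eqref{eq_1}, namely the identity $\sum_{w\in LD(v)}\lambda(w)=(\text{number of leaves below }v)-1$ for every vertex $v$, which holds because the leftmost-descendant labels account for all leaves below $v$ except the leftmost one. Combined with the earlier lemmas it gives that only the root can carry the label $0$, that every inner non-root vertex has label in $\{1,\dots,m-1\}$, and that a vertex labelled $m$ is either the global leftmost leaf or a non-leftmost leaf.

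With these facts the position analysis becomes forced. For a factor $\rho_1\rho_m\rho_\ell$, the middle letter is a vertex labelled $m$ that has a predecessor, hence a non-leftmost leaf $L$; its predecessor is the root $r_2$ of the sibling subtree immediately to the left of $L$, and $\lambda(r_2)=1$ forces the parent $r_1$ of $L$ to have exactly two children, namely the subtree rooted at $r_2$ and the leaf $L$ (otherwise the leaf-count identity would put $n-1$ leaves below $r_2$, making the total exceed $n$). Thus the successor of $L$ is $r_1$, labelled $\ell$ and an inner non-root vertex so that $1\le\ell\le m-1$; this is precisely the configuration of the left-hand tree of \Cref{fig:local_move}. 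Symmetrically, for a factor $\rho_{\ell+1}\rho_m$ with $\ell+1\ge 2$ the second letter is again a non-leftmost leaf $L$ whose predecessor $r_2$ is its immediate left sibling, with $\lambda(r_2)=\ell+1\ge 2$; were their common parent to have only two children, $r_2$ would be a leftmost child with a single leaf to its right and hence labelled $1$, so the parent in fact has at least three children, exactly the hypothesis for inserting a new node by a reverse local move. In both cases \Cref{lem:b_def} identifies the rewritten word as $\Phi_{n-1}(\widetilde T)$ for the moved tree $\widetilde T$, which establishes stability and hence the proposition.
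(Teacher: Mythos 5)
Your proof is correct and follows essentially the same route as the paper's: both reduce surjectivity to showing that every occurrence of a relation-side $\rho_1\rho_m\rho_\ell$ or $\rho_{\ell+1}\rho_m$ in a word $\Phi_{n-1}(T)$ necessarily sits at a local-move configuration as in Figure~\ref{fig:local_move}, and then propagate along a sequence of defining relations starting from the Schröder bush. Your packaging of the case analysis through the identity $\sum_{w\in LD(v)}\lambda(w)=(\text{number of leaves below }v)-1$ is just a sharpened form of the paper's inequality~\eqref{eq_1}, and the resulting position analysis matches the paper's case distinctions.
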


\begin{proof}
We have to show that to each word $y$ for $\rho_{n-1}^n\in M_{n-1}$, we can attach a Schröder tree $T$ with $n$ leaves, in such a way that $\Phi_m(T)=y$. The word $y$ and the word $\rho_{n-1}^n$ can be transformed into each other by applying a sequence of defining relations of $M_m$. 
We already know that the word $\rho_{n-1}^n$ is in the image of $\Phi_m$ since it is the image of the Schröder bush. To conclude the proof, we therefore need to show the following claim: given a Schröder tree $S$, if the corresponding labelling has a substring of the form $1 m \ell$ (resp. $(\ell+1)m$) with $1\leq \ell \leq m-1$, then we are necessarily in the configuration of the left picture in Figure~\ref{fig:local_move} (resp. the right picture), and hence we can apply a local move. Indeed, as one can pass from the word $\rho_{n-1}^n$ to the word $y$ by a sequence of defining relations let $y_0=\rho_{n-1}^n, y_1, \dots, y_k=y$ be expressions of $\rho_{n-1}^n$ such that $y_i$ is obtained from $y_{i-1}$ by applying a single relation in $M_m$. Applying the relation on $y_0=\Phi_m(T)$ to get $y_1$ corresponds to applying a local move on $T$ to get a Schröder tree $T_1$ and as seen in the proof of~\cref{lem:b_def}, we get $\Phi_m(T_1)=y_1$.

To show the claim, assume that $S$ is a Schröder tree with labelling having a substring of the form $1 m \ell$ with $1\leq \ell \leq m-1$. Note that $m$ can only be the label of a leaf. Let $v$ be the parent of that leaf. It is a root of a family of trees, say $(v, T_1, \dots, T_k)$ and our leaf with label $m$ corresponds to one of the trees $T_i$ (which has to be empty). It is clear that such a tree cannot be $T_1$: indeed, as $T_1$ is the leftmost child of $v$, in that case $m=n-1$ would be the number of leafs in the forest $T_2, \dots, T_k$, which is at most $n-1$. As $m= n-1$, the only possibility would be that $v$ is the root of $S$, hence $m$ would be the first label and therefore could not be preceded by a label $1$. Hence $m$ labels one of the trees $T_2, \dots, T_k$, say $T_i$. It follows that the label $1$ preceding $m$ is the label of the root of $T_{i-1}$. If $i=2$ then $k=2$ as the label $1$ is then the label of the leftmost child of $v$, meaning that there is only one leaf in the forest $T_2, \dots, T_k$. In that case, it only remains to show that $v$ cannot be the root of $S$ to match the configuration in the left picture of Figure~\ref{fig:local_move}. But this is clear for if $v$ was the root of $S$, the last label would be $m$ corresponding to $T_2$, hence no $\ell$ could appear. Hence $v$ is not the root of $S$, and its label is $\ell$. Now if $i\neq 2$, then $i-1\neq 1$. The root $v'$ of $T_{i-1}$ is labelled by $1$ and as $v'$ is not the leftmost child of $v$, we have $1=\lambda(v')=m-\sum_{w\in LD(v')} \lambda(w)$, yielding $\sum_{w\in LD(v')} \lambda(w)=m-1$. This means that there are $m$ leaves in $T_{i-1}$, and as there is one leaf in $T_i$ and $m= n-1$, the only possibility is that $i-1=1$ and $k=2$, contradicting $i\neq 2$. 

Now, assume that $S$ is a Schröder tree with labelling having a substring of the form $(\ell+1) m$ with $1\leq \ell \leq m-1$. Again, $m$ can only label a leaf. Let $v$ be the parent of that leaf as above, which is a root of a family $T_1, \dots, T_k$ of trees with $T_i$ corresponding to our leaf for some $i$. We need to show that $i\neq 1$ and $k\geq 3$. In the previous case we have seen that if $i=1$, then $m=n-1$ is the number of leaves in $T_2, \dots, T_k$, forcing $v$ to be the root of $S$ and $m$ to be the first label in $S$. Hence $i\geq 2$. If $k=2$ (hence $i=2$), then the root of $T_1$ is labelled by $1=\ell+1$, contradicting $1 \leq \ell$. Hence $k\geq 2$.       
\end{proof}

\begin{lemma}\label{lem:weight_tree}
\begin{enumerate}
\item Let $T$ be a Schröder tree with $n$ leaves labelled by $m\geq n-1$. Then, the weight of $T$ is $nm$. 
\item Let $w$ be a vertex of $T$ which is not a leaf and $v$ its leftmost child, that is $w$ is the root of a Schröder tree $\big(w,(T_1,\cdots,  T_k)\big)$ and $v$ is the root of $T_1$. Then the weight of the forest $F = (T_2,\cdots,T_k)$ attached to $w$ is $\lambda(v)m$, and the labelling of of a vertex in a tree $T_i$ for $i\geq 2$ is the same as its labelling inside $T$. 
\end{enumerate}
\end{lemma}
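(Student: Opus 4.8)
The plan is to prove the weight identity (1) by induction on the number $n$ of leaves, using \Cref{lem:inductive_label} as the main tool, and to extract the two assertions of (2) around it: the statement about labels needs no induction and can be disposed of first, while the weight of $F$ follows once (1) is available for all trees. First I would settle the label claim of (2). For $i\geq 2$ and any vertex $x$ of $T_i$, the two labelling rules compute $\lambda(x)$ from data internal to $T_i$: if $x$ is a leftmost child, Rule~(1) reads off the number of leaves of a sibling forest contained in $T_i$; otherwise Rule~(2) gives $\lambda(x)=m-\sum_{u\in LD(x)}\lambda(u)$, where the leftmost descendants $LD(x)$ lie in the subtree rooted at $x$, itself inside $T_i$. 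Hence $\lambda(x)$ is the same whether $T_i$ is taken standalone or embedded in $T$. (Equivalently this is \Cref{lem:inductive_label}(1), since no vertex of a $T_i$ with $i\geq 2$ is the root of the leftmost subtree.)

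For the weight identity (1) I would decompose $T=\big(r,(T_1,\dots,T_k)\big)$, write $n_j$ for the number of leaves of $T_j$, and split the total weight, with labels those of $L_{m}(T)$, as
\[
\mathrm{weight}(T)=\lambda(r)+\Big(\sum_{x\in T_1}\lambda(x)\Big)+\sum_{j\geq 2}\Big(\sum_{x\in T_j}\lambda(x)\Big).
\]
The crux is the bookkeeping for the leftmost subtree $T_1$: by \Cref{lem:inductive_label}(2) the only label differing between $L_{m}(T_1)$ and $L_{m}(T)$ is that of the root of $T_1$, whose standalone value exceeds its value in $T$ by exactly $\lambda(r)$; hence $\sum_{x\in T_1}\lambda(x)=\mathrm{weight}(T_1)-\lambda(r)$ and the stray $\lambda(r)$ cancels against the first summand. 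The induction hypothesis gives $\mathrm{weight}(T_1)=n_1 m$, and the label claim above together with the induction hypothesis gives $\sum_{j\geq 2}\sum_{x\in T_j}\lambda(x)=\sum_{j\geq 2}n_j m=(n-n_1)m$. Summing yields $\mathrm{weight}(T)=n_1 m+(n-n_1)m=nm$.

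Finally, the weight statement of (2) follows from (1) applied to the trees $T_2,\dots,T_k$ (each with fewer than $n$ leaves): by the label claim their labels inside $T$ are intrinsic, so $\mathrm{weight}(F)=\sum_{j\geq 2}n_j m$, which is $m$ times the number of leaves of $F$. By labelling Rule~(1) this number of leaves is exactly $\lambda(v)$, so $\mathrm{weight}(F)=\lambda(v)m$.

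I expect the main friction to be organizational rather than deep. The genuine content is the cancellation in the leftmost subtree afforded by \Cref{lem:inductive_label}(2): the root of $T_1$ "absorbs" the root label $\lambda(r)$, which is precisely what makes the telescoping close. The remaining care lies in the base case ($n=2$, where $\delta_2$ is labelled $1,m,m-1$ with total $2m$) and in single-leaf subtrees $T_j$, for which I would not invoke (1)—undefined for one leaf—but read the label directly from the rules: $\lambda=m$ when $T_j$ is a non-leftmost leaf (Rule~(2) with $LD=\varnothing$), and $\lambda=n-n_1$ when the leftmost child is a leaf (Rule~(1)), checking in each case that the same total $nm$ emerges.
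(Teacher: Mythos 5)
Your proof is correct and follows essentially the same route as the paper: induction on the number of leaves using Lemma~\ref{lem:inductive_label}, with the root label of $T$ absorbed into the leftmost subtree's contribution, and part~(2) deduced from part~(1) since the forest $F$ has exactly $\lambda(v)$ leaves. Your extra care with the base case and single-leaf subtrees is harmless bookkeeping that the paper handles implicitly (its induction starts at one leaf, where the lone leaf is labelled $m$).
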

\begin{proof}
The first result is proved by induction on the number of leaves. If the tree has one leaf the result holds by definition of our labelling. Let $T = (r,T_1,\cdots ,T_k)$ be a Schröder tree, where $T_i$ has $n_i$ leaves. By induction, the tree $T_i$ has weight $mn_i$ for $i\geq 1$. Using~\Cref{lem:inductive_label}, the sum of the labels of the vertices of the tree $T_i$ (in $T$) is equal to $mn_i$ for $i\geq 2$ and the sum of the labels of the vertices of $T_1$ and of the root of $T$ is equal to $mn_1$. Hence, the tree $T$ has weight $\sum_{i = 1}^{k} mn_i = mn$. For the second point, the number of leaves of the forest $F$ is equal to $\lambda(v)$. Hence by the first point, the forest $F$ has weight $\lambda(v)m$. 
\end{proof}
\begin{prop}\label{prop:label_injectivity}
Let $m\geq n-1$. Then the map $\Phi_m$ from the set of Schröder trees with $n$ leaves to the set of words for the element $\rho_{n-1} (\rho_{m})^{n-1} \rho_{m-n+1}\in M_m$ is injective. 
\end{prop}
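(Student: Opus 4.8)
The plan is to argue by induction on the number $n$ of leaves, the cases $n=1,2$ being immediate since there is a unique Schröder tree with one, respectively two, leaves. For the inductive step I will show that the word $\Phi_m(T)$, regarded as its post-order sequence of labels, determines $T$ entirely: I recover the root decomposition $T=\big(r,(T_1,\dots,T_k)\big)$ together with the factorization of $\Phi_m(T)$ into the blocks coming from $T_1,\dots,T_k$, and then invoke the induction hypothesis on each $T_i$. Here each $T_i$ has $n_i<n$ leaves, because the root of a Schröder tree with $n\geq 2$ leaves is an inner vertex and hence has $k\geq 2$ children.

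First I would recover the full post-order sequence of labels from the word. By the nonnegativity lemma every label is strictly positive with the sole possible exception of the root $r$, whose label is always $m-n+1$ (a telescoping sum along the leftmost branch shows that the labels of the leftmost descendants of $r$ sum to $n-1$, which is also consistent with \Cref{lem:b_def}). Thus, knowing $m$ and $n$, the label sequence is read off from $\Phi_m(T)$: when $m>n-1$ the final letter is $\rho_{m-n+1}$ coming from $r$, which I strip off, and when $m=n-1$ the root contributes $\rho_0=e$ and nothing is removed. In both cases one is left with the concatenation $B_1\,\Phi_m(T_2)\cdots\Phi_m(T_k)$, where by \Cref{lem:inductive_label} the blocks $\Phi_m(T_2),\dots,\Phi_m(T_k)$ of the non-leftmost subtrees are intrinsic, and $B_1$ is $\Phi_m(T_1)$ with its final letter $\rho_{m-n_1+1}$ (the root of $T_1$) replaced by $\rho_{n-n_1}$, the shift being exactly $\lambda(r)=m-n+1$.

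The core of the argument is to peel off $T_k,T_{k-1},\dots,T_2$ from the right. The last letter of the stripped word is the root of $T_k$, namely $\rho_{m-n_k+1}$, which determines $n_k$; by \Cref{lem:weight_tree} the block $\Phi_m(T_k)$ has $\lambda$-length $n_k m$, and since every letter of the stripped word has positive index, the suffix of $\lambda$-length $n_k m$ is unique and equals $\Phi_m(T_k)$. The induction hypothesis then recovers $T_k$, and iterating recovers $n_{k-1},\dots,n_2$ and the blocks $\Phi_m(T_{k-1}),\dots,\Phi_m(T_2)$. The prefix that remains is $B_1$; since $n_1=n-\sum_{i\geq 2}n_i$ is now known, I restore its final letter to $\rho_{m-n_1+1}$ to obtain $\Phi_m(T_1)$ and apply the induction hypothesis to recover $T_1$. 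As the data $(n_1,\dots,n_k)$ and $(T_1,\dots,T_k)$ depend only on the word, two trees with the same image share the same root decomposition and the same subtrees, and hence coincide.

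The main obstacle is to make the peeling unambiguous at the leftmost block, that is, to guarantee that the procedure halts at $B_1$ instead of mistaking it for another right subtree. When $m=n-1$ the shift is trivial, $m-n_1+1=n-n_1$, so $B_1=\Phi_m(T_1)$ and all $k$ children are peeled uniformly until the word is exhausted, requiring no separate stopping rule. When $m\geq n$, the final letter $\rho_{n-n_1}$ of $B_1$ would force a putative subtree of size $s=m-n+n_1+1$, hence a block of $\lambda$-length $sm$; but a short computation gives $sm-\lambda(B_1)=m(m-n+2)-(n-1)>0$, so not enough $\lambda$-length remains, and this overflow is precisely the signal that the leftmost block has been reached. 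Verifying this inequality, and checking that no genuine right subtree triggers it prematurely (there the required $\lambda$-length $n_i m$ is always available, with the nonempty block $B_1$ still sitting to its left), is the one computational point that needs care; everything else follows formally from \Cref{lem:inductive_label} and \Cref{lem:weight_tree}.
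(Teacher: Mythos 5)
Your proof is correct, and it rests on the same two pillars as the paper's --- \Cref{lem:inductive_label} to relate labels in $T$ to labels in the standalone subtrees, and \Cref{lem:weight_tree} to locate block boundaries by weight --- but you run the decoding in the opposite direction. The paper reads the word left to right: starting from the leftmost leaf it climbs the leftmost branch of $T_1$, using the fact that leftmost-child labels are leaf counts (stopping when they sum to $n-1$, resp.\ to $m$ for the subsequent subtrees) and using the weight $m\cdot w_k$ of each right forest to jump over it; the ``correction'' of the root label of $T_1$ by the last letter of $W$ is exactly your shift by $\lambda(r)=m-n+1$. You instead peel blocks off the right end: the last letter $\rho_{m-n_k+1}$ reveals $n_k$, the suffix of weight $n_km$ is unique because all non-root labels are positive and hence equals the block of $T_k$, and the overflow inequality $m(m-n+2)>n-1$ (which does hold for $m\geq n$, since $m(m-n+2)\geq 2m\geq 2n$) correctly signals arrival at the leftmost block, while no genuine right subtree can trigger it since its full block of weight $n_im$ is present as a suffix. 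Your variant has the minor advantage of a uniform peeling rule for all non-leftmost children, with only the final block needing special treatment (and only when $m\geq n$); the paper's left-to-right reading has the advantage that it decodes \emph{prefixes} of the word without seeing the whole tree, which is precisely what is reused later in \Cref{lem:leftmostchild}, Proposition~\ref{dni_trees} and Theorem~\ref{thm_words_whole} --- a right-to-left peeling could not serve there. Both of the computational points you flag check out, so the argument is complete.
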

\begin{proof}
Let $T = (r,T_1,\cdots, T_k)$ be a Schröder tree with $n$ leaves labelled by $m$. This proof is purely combinatorial and it only involves the word $W$ in $\mathbb{N}$ obtained by reading the labels of the tree in post-order. The first step of the proof is to remark that one can recover the decomposition `root and list of subtrees' of a Schröder tree just by looking at $W$. We will illustrate the algorithm in~\Cref{ex:decomp} below. Precisely we want to split the word $W$ into a certain number of factors $W = W_1 \cdots W_k$ such that each subword $W_i$ is equal to the word obtained by reading the labels of $L_{m}(T_i)$ in post-order.

The first letter $w_1$ of $W$ is the label of the leftmost leaf $r_1$ of $T$ and by induction we will find the letters $w_2, w_3,\cdots,w_i$ corresponding to the ancestors $r_2,r_3,\cdots, r_i$ of $r_1$. Since the labels of these vertices count a number of leaves of $T$, when $\sum_{j=1}^{i} w_j = n-1$, then all the leaves of $T$ have been counted so $r_i$ is the root of $T_1$ and we stop the induction.

If we have found the letter $w_k$ corresponding to $r_k\neq r$,  then $w_k$ is the number of leaves of the right forest attached to the parent $r_{k+1}$ of $r_k$. By \Cref{lem:weight_tree}, the weight of $F$ is $m\cdot w_k$, hence the word obtained by reading the vertices of $F$ is $w_{k+1}\cdots w_i$ where $i$ is the smallest integer such that $\sum_{j = k+1}^{i} w_j= mw_k$. All these letters correspond to the vertices of $F$, hence the next letter is the label of the vertex read after $F$ in the post-order traversal, which is the vertex $r_{k+1}$. 
 
Since the word $W$ only contains strictly non-negative integers (except possibly the label of the root of $T$), at each step of the induction the value $w_1 + \cdots +w_i$ strictly increases and the induction stops. If $w_{n_1}$ is the letter corresponding to the leftmost child of the root of $T$, then the word $w_1\cdots w_{n_1}$ is the word obtained by reading all the vertices of the subtree $T_1$. By \Cref{lem:inductive_label}, this is almost the word obtained by reading $L_{m}(T_1)$ we just need to `correct' the label of the root of $T_1$ by adding the label of the root of $T$ which is the last letter $w_l$ of $W$. To conclude the word consisting of the labels of $T_1$ is $W_{T_1} = w_1 \cdots w_{n_{1}-1} (w_{n_1} + w_l)$. 
 
 Let $\widetilde{W}$ be the word obtained by removing the letters $w_1,\cdots, w_{n_1}$ and $w_l$. 
 We use the same procedure to extract the subwords corresponding to the other subtrees of $T$. Due to the asymmetry of \Cref{lem:inductive_label}, there is a slight difference. We have found all the labels $w_1,w_2,\cdots, w_t$ of the vertices $r_1,r_2,\cdots, r_t$ of the left branch of $T_i$ when $\sum_{j = 1}^{t}w_j = m$ and there is no need to `correct' the word as above.

We are now ready to prove that $\Phi_{m}$ is injective. If the words of two trees $T = (r, (T_1,\cdots, T_k)) $ and $S = (s, (S_1,\cdots, S_l))$ obtained by reading the labels of their vertices in post-order are equal, then by the discussion above we have $ k = l $ and for $i \in \{1,\cdots, k\}$, the words obtained by reading the vertices of the subtrees $L_m(T_i)$ and $L_{m}(S_i)$ are equal. By induction on the number of leaves, we have $S_i = T_i$ for $i = 1,\cdots,k$ and we get that $T = S$. 
\end{proof}
\begin{exple}\label{ex:decomp}
We illustrate the decomposition involved in the proof of \Cref{prop:label_injectivity} with the example of \Cref{fig:label_tree}. We consider the leftmost subtree $T_1$ of $T$ with $n=7$ leaves and which is labelled by $m=11$. We have $\Phi_{11}(T_1) = \rho_1 \rho_{11} \rho_{5} \rho_1 \rho_{11}\rho_{10} \rho_{2} \rho_{11} \rho_{11} \rho_{9} \rho_{5}$. The first letter $1$ tels us that the forest on the right of the leftmost leaf $r_1$ has $1$ vertex. Its weight is $m = 11$. Hence $\rho_{11}$ labels the only vertex of the forest and the next letter $5$ corresponds to the parent $r_2$ of $r_1$. Since $1+5 = 6$ we know that it is the leftmost child of the root. Hence the word $\rho_1 \rho_{11} \rho_{5}$ is obtained by reading the vertices of the leftmost subtree $S$ of $T_1$. We apply the `correction' and we get $\rho_{1} \rho_{11} \rho_{10} = \Phi_{11}(S)$. The rest of the word $\rho_1 \rho_{11}\rho_{10} \rho_{2} \rho_{11} \rho_{11} \rho_{9}$ corresponds to the other subtrees of $T_1$ and it splits as $\rho_1 \rho_{11}\rho_{10}$ and $\rho_{2} \rho_{11} \rho_{11} \rho_{9}$.
\end{exple}

Combining~\Cref{prop:surj} and~\Cref{prop:label_injectivity} we get our main result of the section:

\begin{theorem}\label{thm_main_reduced}
For $m=n-1$, the map $\Phi_{m}$ from the set of Schröder trees with $n$ leaves to the set of words for $\rho_{n-1}^n$ in $M_{n-1}$ is bijective.
\end{theorem}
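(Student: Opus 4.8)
The plan is to read off this statement as an immediate consequence of the two propositions just established, so that the proof reduces to checking that their hypotheses specialize correctly at $m = n-1$ and then invoking the elementary fact that a map which is both injective and surjective is a bijection.

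First I would reconcile the two codomains. \Cref{prop:label_injectivity} is phrased in terms of the set of words for $\rho_{n-1}(\rho_m)^{n-1}\rho_{m-n+1}$, whereas the present theorem speaks of words for $\rho_{n-1}^n$. By \Cref{lem:b_def}, setting $m = n-1$ makes $\Phi_m(T)$ a word for $\rho_{n-1}(\rho_{n-1})^{n-1}\rho_{0}$, and since $\rho_0 = e$ by convention this is exactly $\rho_{n-1}^n$. Thus at $m = n-1$ the two propositions genuinely concern the same source and target, namely Schröder trees with $n$ leaves on the one hand and words for $\rho_{n-1}^n$ in $M_{n-1}$ on the other.

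With this identification in place I would simply combine the two results: \Cref{prop:surj} shows that $\Phi_{n-1}$ is surjective onto this set of words, while \Cref{prop:label_injectivity}, taken at $m = n-1$, shows that it is injective. A map of sets that is simultaneously injective and surjective is a bijection, which is precisely the claim.

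The substantive difficulty has already been discharged inside the two cited propositions rather than at this final step. Surjectivity rests on the analysis showing that every application of a defining relation to a word for $\rho_{n-1}^n$ corresponds to a local move on the associated Schröder tree, so that starting from the image of the Schröder bush one reaches every word; injectivity rests on the purely combinatorial reconstruction of a tree's decomposition into root and subtrees from its post-order label word. Consequently I expect no real obstacle here: the only point demanding care is the bookkeeping of the $\rho_0 = e$ convention when matching the two target sets, and this is exactly what \Cref{lem:b_def} supplies.
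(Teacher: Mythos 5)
Your proposal is correct and follows exactly the paper's route: the theorem is obtained by combining Proposition~\ref{prop:surj} (surjectivity) with Proposition~\ref{prop:label_injectivity} specialized at $m=n-1$ (injectivity), and your extra check that the codomains agree via $\rho_0=e$ is sound bookkeeping that the paper leaves implicit. Nothing is missing.
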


\begin{cor}\label{cor_graph}
The following two graphs are isomorphic under $\Phi_{n-1}$:
\begin{enumerate}
\item The graph of words for $\rho_{n-1}^{n}$ in $M_{n-1}$, where vertices are given by expressions of $\rho_{n-1}^{n}$ and there is an edge between two expressions whenever they differ by application of a single relation,
\item The graph of Schröder trees with $n$ leaves, where vertices are given by Schröder trees and there is an edge between two trees whenever they differ by application of a local move. 
\end{enumerate} 
\end{cor}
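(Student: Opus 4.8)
The plan is to upgrade the vertex bijection supplied by \Cref{thm_main_reduced} into a graph isomorphism by checking that $\Phi_{n-1}$ matches edges with edges in both directions. Since graphs (1) and (2) share the same vertex set once identified via $\Phi_{n-1}$, and since applying a defining relation of $M_{n-1}$ to a word for $\rho_{n-1}^n$ again produces a word for $\rho_{n-1}^n$ (hence, by \Cref{thm_main_reduced}, the image under $\Phi_{n-1}$ of a unique Schröder tree with $n$ leaves), it suffices to show that for a fixed tree $T$ the local moves applicable to $T$ are in bijection with the single-relation applications available on $\Phi_{n-1}(T)$, with corresponding moves landing on corresponding vertices.

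First I would dispatch the forward direction using \Cref{lem:b_def}: applying a local move to $T$ transforms $\Phi_{n-1}(T)$ by replacing a factor $\rho_1 \rho_{n-1} \rho_\ell$ by $\rho_{\ell+1}\rho_{n-1}$ (or the reverse) for some $1\leq\ell\leq n-2$, which is exactly one of the defining relations of $M_{n-1}$. Thus a local move relating $T$ and $\widetilde T$ produces an edge of graph (1) between $\Phi_{n-1}(T)$ and $\Phi_{n-1}(\widetilde T)$, so $\Phi_{n-1}$ carries every edge of graph (2) to an edge of graph (1).

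For the converse direction I would invoke the combinatorial claim established inside the proof of \Cref{prop:surj}: if the post-order labelling of a Schröder tree $S$ contains a factor $1\,m\,\ell$ (resp. $(\ell+1)\,m$) with $1\leq\ell\leq m-1$ --- equivalently if $\Phi_{n-1}(S)$ contains $\rho_1\rho_{n-1}\rho_\ell$ (resp. $\rho_{\ell+1}\rho_{n-1}$) at that position --- then $S$ is forced into the left (resp. right) configuration of \Cref{fig:local_move}, so a local move can be performed there. Because the defining relations of $M_{n-1}$ are precisely the rewritings $\rho_1\rho_{n-1}\rho_\ell \leftrightarrow \rho_{\ell+1}\rho_{n-1}$, every position at which a relation can be applied to $\Phi_{n-1}(T)$ arises from a genuine local-move site of $T$, and the two resulting words again correspond under $\Phi_{n-1}$. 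Hence every edge of graph (1) is the image of an edge of graph (2).

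The only point requiring genuine care --- and where essentially all the content has already been packaged in the earlier lemmas --- is ensuring that the correspondence of edge-sites is \emph{exact} rather than merely surjective, so that no edges are created or collapsed. Concretely, one must know that distinct positions admitting a relation in $\Phi_{n-1}(T)$ come from distinct local-move sites of $T$; this follows from the subtree-decomposition algorithm of \Cref{prop:label_injectivity}, which locates each occurrence of $\rho_1\rho_{n-1}\rho_\ell$ or $\rho_{\ell+1}\rho_{n-1}$ at a unique vertex configuration of $T$. Combined with the forward direction, this yields a bijection between the edges incident to $T$ and those incident to $\Phi_{n-1}(T)$, and assembling this edge correspondence with the vertex bijection of \Cref{thm_main_reduced} gives the claimed graph isomorphism.
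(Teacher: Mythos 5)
Your proposal is correct and follows essentially the same route as the paper: the vertex bijection comes from Theorem~\ref{thm_main_reduced}, the forward direction (local move $\Rightarrow$ relation) from the proof of Lemma~\ref{lem:b_def}, and the converse (relation $\Rightarrow$ local move) from the configuration claim established in the proof of Proposition~\ref{prop:surj}. Your additional discussion of the exact matching of edge-sites is a harmless refinement not needed for the simple graphs considered here.
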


\begin{proof}
The previous theorem gives the bijection between the sets of vertices. The proof of \Cref{lem:b_def} shows that whenever one can apply a local move, one can apply a relation on the corresponding words. The proof of \Cref{prop:surj} shows that whenever one can apply a relation on words, a local move can be applied on the corresponding trees.  
\end{proof}

We illustrate the situation for $M_3$ in Figure~\ref{words_garside_n_3} below. 

\begin{cor}\label{cor_words_g}
The number of words for the Garside element of $M_n$ is a little Schröder number \href{https://oeis.org/A001003}{A001003} \cite{oeis}.
\end{cor}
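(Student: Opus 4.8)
The plan is to read the result directly off the bijection of \Cref{thm_main_reduced}, after a suitable shift of indices. Recall that the Garside element of $M_n$ is $\Delta_n = \rho_n^{n+1}$, so what we must count is the number of words for $\rho_n^{n+1}$ in $M_n$. The statement of \Cref{thm_main_reduced} is phrased for the Garside element $\rho_{n-1}^n$ of $M_{n-1}$; to reach $M_n$ one applies that theorem with its internal ``number of leaves'' parameter set to $n+1$ rather than $n$.

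Concretely, I would take $m = n$ in the construction of $\Phi_m$ and apply \Cref{lem:b_def} to a Schröder tree $T$ with $n+1$ leaves. Since $m = n = (n+1)-1$, the lemma gives that $\Phi_n(T)$ is a word for $\rho_{(n+1)-1}(\rho_n)^{(n+1)-1}\rho_{n-(n+1)+1} = \rho_n (\rho_n)^{n} \rho_0 = \rho_n^{n+1}$ in $M_n$, where we used the convention $\rho_0 = e$; this is exactly $\Delta_n$. Thus $\Phi_n$ sends the set of Schröder trees with $n+1$ leaves into the set of words for $\Delta_n$, and by \Cref{thm_main_reduced} (with $n$ replaced by $n+1$) this map is a bijection. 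Hence the number of words for $\Delta_n$ equals the number of Schröder trees with $n+1$ leaves.

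Finally, I would conclude by recalling, as noted after \Cref{fig:Schroeder_trees}, that Schröder trees are enumerated by the little Schröder numbers (sequence A001003 of~\cite{oeis}); the number of words for the Garside element of $M_n$ is therefore a little Schröder number, as claimed. No genuine obstacle arises here: the only point demanding care is the bookkeeping of the index shift $n \mapsto n+1$, together with the boundary case $m = n-1$ of \Cref{lem:b_def}, in which the trailing factor $\rho_{m-n+1} = \rho_0 = e$ disappears — this is exactly what collapses the general word $\rho_{n-1}(\rho_m)^{n-1}\rho_{m-n+1}$ into a pure power of $\rho_m$.
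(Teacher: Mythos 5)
Your proposal is correct and follows exactly the paper's (implicit) argument: the corollary is an immediate consequence of Theorem~\ref{thm_main_reduced} applied with $n+1$ leaves, identifying words for $\Delta_n=\rho_n^{n+1}$ with Schröder trees on $n+1$ leaves, which are counted by the little Schröder numbers. Your careful check of the index shift and of the degenerate factor $\rho_0=e$ in Lemma~\ref{lem:b_def} matches what the paper leaves to the reader.
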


\begin{figure}[h!]
	\begin{center}
		\begin{tikzpicture}
			\matrix (m) [matrix of math nodes,row sep=3em,column sep=4em,minimum width=2em]
			{
				\rho_2 \rho_1 \rho_3\rho_2 \rho_1 \rho_3 & \rho_3 \rho_1 \rho_3 \rho_2 \rho_3 & \rho_3 \rho_1 \rho_3 \rho_1 \rho_3 \rho_1 & \rho_3 \rho_2 \rho_3\rho_3 \rho_1 \\
				\rho_2 \rho_3\rho_3 \rho_1 \rho_3 & \rho_3 \rho_3 \rho_3 \rho_3 & \rho_3 \rho_3 \rho_1\rho_3 \rho_2 & \rho_3 \rho_2 \rho_1 \rho_3 \rho_2 \rho_1 \\
				\rho_1\rho_3 \rho_1 \rho_3 \rho_1 \rho_3 & \rho_1 \rho_3 \rho_2 \rho_3 \rho_3 & \rho_1 \rho_3 \rho_2 \rho_1 \rho_3 \rho_2 &  \\};
			\path[-]
			(m-1-1) edge (m-2-1)
			(m-2-1) edge (m-3-1)
			(m-3-1) edge (m-3-2)
			(m-3-2) edge (m-2-2)
			(m-1-2) edge (m-2-2)
			(m-1-2) edge (m-1-3)
			(m-3-2) edge (m-3-3)
			(m-3-3) edge (m-2-3)
			(m-1-4) edge (m-1-3)
			(m-2-4) edge (m-1-4)
			(m-2-2) edge (m-2-3);
		\end{tikzpicture}
	\end{center} 
\vspace{1.2cm}
\begin{center}
		\begin{tikzpicture}
	\matrix (m) [matrix of math nodes,row sep=3em,column sep=4em,minimum width=2em]
	{
		  \Tree[. [.1 [.2 ]
		  [.2 [.1 ] [.3 ] ]]
		  [.3 ]] & \Tree[. [.3 ] [.2 [.1 ] [.3 ]] [.3 ]] & \Tree[. [.3 ] [.1 [.1 [.1 ] [.3 ]] [.3 ]] ] & \Tree[. [.3 ] [.1 [.2 ] [.3 ] [.3 ]]] \\
		\Tree[. [.1 [.2 ] [.3 ] [.3 ]] [.3 ]] & \Tree[. [.3 ] [.3 ] [.3 ] [.3 ] ] & \Tree[. [.3 ] [.3 ] [.2 [.1 ] [.3 ]] ] & \Tree[. [.3 ] [.1 [.2 ] [.2 [.1 ] [.3  ]]]] \\
		\Tree[. [.1 [.1 [.1 ] [.3 ]] [.3 ]] [.3 ]] & \Tree[. [.2 [.1 ] [.3 ]] [.3 ] [.3 ]] & \Tree[. [.2 [.1 ] [.3 ]] [.2 [.1 ] [.3 ]]] &  \\};
	\path[-]
	(m-1-1) edge (m-2-1)
	(m-2-1) edge (m-3-1)
	(m-3-1) edge (m-3-2)
	(m-3-2) edge (m-2-2)
	(m-1-2) edge (m-2-2)
	(m-1-2) edge (m-1-3)
	(m-3-2) edge (m-3-3)
	(m-3-3) edge (m-2-3)
	(m-1-4) edge (m-1-3)
	(m-2-4) edge (m-1-4)
	(m-2-2) edge (m-2-3);
\end{tikzpicture}	
\end{center}
	\caption{Illustration of Corollary~\ref{cor_graph} for $n=4$: the graph of reduced words for $\Delta_3$ and the isomorphic graph of Schröder trees on $4=3+1$ leaves.}
	\label{words_garside_n_3}
\end{figure}

\begin{lemma}\label{lem:wordsubtree}
Let $T = (r,S_1,\cdots, S_k)$ be a Schröder tree with $n$ leaves labelled by $m = n-1$. Then, the word obtained by reading all the labels of a subtree $S_j$ is a word for $\rho_n^{l_j}$ where $l_j$ is the number of leaves of $S_j$.
\end{lemma}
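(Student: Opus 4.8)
The plan is to reduce the statement to \Cref{lem:b_def}. The point is that reading the labels of the subtree $S_j$ as it sits inside $T$ should reproduce exactly the intrinsic labelling $L_m(S_j)$ of $S_j$ regarded as a standalone Schröder tree; once that is granted, identifying the element represented is a short computation in the monoid $M_m$.

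First I would compare, vertex by vertex, the label of a vertex of $S_j$ in $L_m(T)$ with its label in $L_m(S_j)$. By \Cref{lem:inductive_label}, these two labels coincide at every vertex with the single exception of the root of the leftmost subtree $S_1$, whose intrinsic label exceeds its ambient label by exactly $\lambda(r)$, the label of the root $r$ of $T$. The decisive observation is that in the specialization $m=n-1$ one has $\lambda(r)=0$: indeed, by \Cref{lem:b_def} the word $\Phi_m(T)$ represents $\rho_{n-1}(\rho_m)^{n-1}\rho_{m-n+1}$, and its last letter $\rho_{m-n+1}=\rho_0=e$ is precisely the contribution of the root $r$. Hence the exceptional correction term vanishes, and for \emph{every} $j$ the word obtained by reading the labels of $S_j$ in $T$ is exactly $\Phi_m(S_j)$, with $S_j$ viewed as a Schröder tree with $l_j$ leaves labelled by $m$.

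Next I would apply \Cref{lem:b_def} to $S_j$, which is legitimate since $l_j\leq n$ gives $m=n-1\geq l_j-1$: the word $\Phi_m(S_j)$ represents $\rho_{l_j-1}(\rho_m)^{l_j-1}\rho_{m-l_j+1}$ in $M_m$. It then remains to check that this element is a power of the top generator, namely $\rho_m^{l_j}$. This is the only genuine calculation: rewriting $\rho_{l_j-1}(\rho_m)^{l_j-1}=(\rho_1\rho_m)^{l_j-1}$ (the identity $\rho_k\rho_m^k=(\rho_1\rho_m)^k$ used in \Cref{lem:atomes_gauche}), and then applying $(\rho_1\rho_m)^{m-i}\rho_i=\rho_m^{m-i+1}$ (from the proof of \Cref{lem:atomes_droite}) with $i=m-l_j+1$, one obtains $(\rho_1\rho_m)^{l_j-1}\rho_{m-l_j+1}=\rho_m^{l_j}$. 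Since $m=n-1$, this is the asserted power of $\rho_m=\rho_{n-1}$.

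The hard part will be the first step, and more precisely the case $j=1$: a priori the ambient labelling of $T$ distorts the label of the root of its leftmost child, so the reading of $S_1$ need not coincide with $\Phi_m(S_1)$, and the naive reduction to \Cref{lem:b_def} would fail for that subtree. What rescues the argument is the fact, special to $m=n-1$, that the root of $T$ carries the label $0$, exactly cancelling the discrepancy predicted by \Cref{lem:inductive_label}. Once this is settled the identification is routine; as a consistency check one may note via \Cref{lem:weight_tree} that both sides have length $l_j m$.
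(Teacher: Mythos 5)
Your proof is correct and follows essentially the same route as the paper's: identify the word read off from $S_j$ with $\Phi_m(S_j)$, invoke \Cref{lem:b_def} to recognize it as a word for $\rho_{l_j-1}\rho_m^{l_j-1}\rho_{m-l_j+1}$, and then rewrite this as $\rho_m^{l_j}$ using the defining relations (the paper does this last step via local moves on trees or a citation to~\cite{Gobet}, you via the identities from \Cref{lem:atomes_gauche,lem:atomes_droite} — the same computation). In fact you are slightly more careful than the paper on one point it glosses over, namely that for $j=1$ the ambient and intrinsic labellings of $S_1$ agree only because the root of $T$ carries the label $0$ when $m=n-1$.
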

\begin{proof}
Let us assume that the tree $S_j$ has $s+1$ leaves. By Lemma \ref{lem:b_def}, the labels of the subtree $S_j$ is a word for $\rho_{s} \rho_{n-1}^{s} \rho_{n-1-s}$. If $s=0$, then we have a word for $\rho_{n-1}$. Otherwise, we can apply the relations \cite[Lemma 4.5]{Gobet} with $i=s$ and $j = n-1$. Alternatively, using the Schröder trees, it is easy to see that these relations comes from the following modifications of the trees. The word $\rho_{s} \rho_{n-1}^{s} \rho_{n-1-s}$ correspond to the case where the tree $S_j$ is the Schröder bush with $s+1$ leaves. Using our local moves, we can modify it to the left comb. The corresponding word is now $(\rho_1 \rho_s)^s\rho_{n-1-s}$. Now we can inductively apply the local move to contract the edge between the root of $T$ and the root left comb. The result is $s+1$ empty trees attached to the root of $T$ and the corresponding word is $\rho_{n-1}^{s+1}$. 
\end{proof}

\begin{prop}\label{dni_trees}
Let $n\geq 1$. We have the following isomorphisms of posets: 
\begin{enumerate}
	\item $D_n^{0}\cong\mathsf{Div}(\Delta_{n-1})$, $D_n^{n+1}\cong \mathsf{Div}(\Delta_0)=\{\bullet\}$,
	\item For all $1\leq i \leq n$, $D_n^i \cong  \mathsf{Div}(\Delta_{n-i})$, 
\end{enumerate}	where every set is ordered by the restriction of the left-divisibility order in the monoid $M_k$ for suitable $k$. 
\end{prop}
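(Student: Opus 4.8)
The plan is to build explicit poset isomorphisms using the combinatorial machinery of Schröder trees developed in this section. The key observation is that by Theorem~\ref{thm_main_reduced} the words for $\rho_{n}^{n+1}=\Delta_n$ in $M_n$ are in bijection with Schröder trees on $n+1$ leaves via $\Phi_{n}$, and by Lemma~\ref{lem:wordsubtree} reading the labels of a subtree $S_j$ of such a tree produces a word for a power $\rho_n^{l_j}$ equal to the number of leaves of $S_j$. Given $x\in\Div(\Delta_n)$, any word for $x$ can be completed (on the right, using centrality of $\Delta_n$ and cancellativity) to a word for $\Delta_n$, hence to a labelled Schröder tree $T$ on $n+1$ leaves. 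The statistic $d(x)=\max\{k\mid\rho_n^k\leq x\}$ should then be read off the tree as a combinatorial quantity: by Lemma~\ref{lemma_word} every word for $x$ has a prefix spelling $\rho_n^{d(x)}$, which by Lemma~\ref{lem:wordsubtree} corresponds precisely to the first $d(x)$ leftmost subtrees of $T$ being single leaves (empty trees). So $D_n^i$ corresponds to trees whose first $i$ children of the root are leaves, and the $(i+1)$-st child is an inner vertex (or no further child exists).

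Concretely, I would realize each isomorphism by a subtree-extraction or prefix-stripping map. For $1\leq i\leq n$: an element $x\in D_n^i$ has a canonical factorization $x=\rho_n^i x'$ with $x'\in D_n^0$ by cancellativity; stripping the $i$ leftmost leaves from the associated tree yields a Schröder tree on $(n-i)+1$ leaves, and by Lemma~\ref{lem:b_def} applied with the parameter shifted to $n-i$, this tree encodes a word for $\Delta_{n-i}=\rho_{n-i}^{n-i+1}$ in $M_{n-i}$, giving an element of $\Div(\Delta_{n-i})$. One checks this assignment is well-defined (independent of the chosen word, using Theorem~\ref{thm_main_reduced}) and bijective. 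For $D_n^0$, the first child of the root is itself an inner vertex, and extracting that leftmost subtree $S_1$—which by Lemma~\ref{lem:wordsubtree} and Lemma~\ref{lem:inductive_label} carries a word for $\Delta_{n-1}$ in $M_{n-1}$ after the label correction of its root—gives the isomorphism $D_n^0\cong\Div(\Delta_{n-1})$. The cases $D_n^{n+1}=\{\Delta_n\}$ and $D_n^n=\{\rho_n^n\}$ are degenerate and checked directly.

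The order-preservation is the part requiring care. I would argue that for $x,y\in D_n^i$ with the same value of $d$, left-divisibility $x\leq y$ in $M_n$ matches left-divisibility of the extracted elements in $M_{n-i}$ (respectively $M_{n-1}$). In one direction, a word for $x$ extends to a word for $y$, and stripping the common $\rho_n^i$ prefix (via cancellativity) transports the divisibility downward; in the other direction one lifts a divisibility relation in the smaller monoid by prepending $\rho_n^i$ and reinterpreting the subtree inside the ambient tree, using that the local moves inside the extracted subtree are exactly the local moves of the smaller tree (Corollary~\ref{cor_graph}).

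The main obstacle I anticipate is verifying that the extraction map is genuinely order-reflecting and order-preserving at the level of the \emph{posets of elements}, not merely of words: distinct words can represent the same element, so I must confirm that the subtree-extraction descends to a well-defined map on $\Div$ and that the divisibility relation, which is defined intrinsically in the monoid rather than on words, is faithfully recorded by the tree structure. This hinges on combining cancellativity with the uniqueness coming from Theorem~\ref{thm_main_reduced} and the compatibility of local moves with monoid relations from Corollary~\ref{cor_graph}; getting the label-correction bookkeeping of Lemma~\ref{lem:inductive_label} to cohere across the ambient and extracted trees is the delicate technical point.
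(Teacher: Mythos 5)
Your plan follows the paper's own proof almost exactly: complete a word for $x=\rho_n^i x'$ to a Schröder tree for $\Delta_n$ whose first $i$ children of the root are single leaves, extract the remaining part as a Schröder tree on $(n-i)+1$ leaves carrying the chosen word for $x'$ as a strict initial section, relabel with the smaller parameter, and use the compatibility of defining relations with local moves (Corollary~\ref{cor_graph}) together with the fact that the root of the extracted subtree is never involved in those moves to get well-definedness on elements, injectivity, surjectivity, and order-compatibility. The one point to correct is your claim that \emph{every} word for $x$ yields a tree whose first $d(x)$ children are leaves: a prefix that is a word for $\rho_n^{d(x)}$ need not be the literal string $\rho_n\cdots\rho_n$ (e.g.\ $\rho_1\rho_n\rho_{n-1}$ is a word for $\rho_n^2$ arising from a two-leaf first subtree), so the normal form with $i$ leading leaf-children must be imposed by choosing the word $\rho_n^i$ followed by a word for $x'$ and completing suitably, exactly as the paper does before running the extraction.
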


\begin{proof}
We begin by proving the second statement. An element $x$ of $D_n^i$ can be written in the form $\rho_n^i x'$, where $x'$ is uniquely determined by cancellativity, and such that $\rho_n$ is not a left-divisor of $x'$. In particular, there is $y$ a divisor of $\Delta_n$ such that $\rho_n^i x' y =\rho_n^{n+1}$, and $y\neq 1$. We associate a tree (or rather a family of trees) to $x$ as follows. Write $x'$ as a product $a_1 a_2 \cdots a_j$ of elements of $\mathcal{S}$. Complete the word $\rho_n^i a_1 a_2 \cdots a_j$ to a word $\rho_n^i a_1 a_2 \cdots a_jb_1 b_2 \cdots b_\ell$ for $\Delta_n$, i.e., choose a word $b_1 b_2\cdots b_\ell$ for $y$. There are several possibilités for the $b_i$'s, but the condition that $x\in D_n^i$ ensures that, writing the corresponding Schröder tree in the form $(r, T_1, T_2, \dots, T_i, S_1, S_2, \cdots S_d)$, where the $i$ first trees are empty trees with a single leaf, then $a_1 a_2 \cdots a_j$ has all its labels inside $S_1$. Indeed, the labelling $a_1a_2\cdots a_j$ begins at the beginning (in the post-order convention) of the tree $S_1$ since the trees $T_1, T_2, \dots, T_i$ yield the label $\rho_n^i$, and if another tree among $S_2, \dots, S_d$ was partly labelled by the $a_i$'s, then a power of $\rho_n$ would left-divide $x'$, since the word obtained from $S_1$ is a power of $\rho_n$ (lemma \ref{lem:wordsubtree}). It is then possible to reduce all the trees $S_1, S_2, \dots, S_d$ to a single tree $S$ still having the labelling $a_1, a_2, \dots, a_j$ at the beginning, by first reducing $S_2, \dots, S_d$ to a set of empty trees $\widetilde{T}_2, \dots, \widetilde{T}_{d'}$ with single leafs, and then merging $S_1$ and $\widetilde{T}_2$ using a local move, then merging the resulting tree with $\widetilde{T}_3$, and so on (see Figure~\ref{fig:merge} for an illustration). In this way we associate to $x$ a Schröder tree of the form $(r, T_1, T_2, \dots, T_i, S)$, where the $T_k$'s are empty trees with a single leaf, and the labelling corresponding to the chosen word $a_1 a_2 \cdots a_j$ is an initial section of the tree $S$ (in fact, in algebraic terms, what we did is modify the word for $y$ to get a suitable one yielding a unique tree after the empty trees). Note that by initial section we mean a prefix of the word obtained from the labelling of $S$ read in post-order, where we exclude the label of the root, i.e., if the root has a label, then the prefix is strict. We denote by $S_{n,i}$ the set of such Schröder trees, that is, those Schröder trees on $n$ leaves with $i+1$ child of the root, and such that the $i$ first child are leafs. Note that the tree that we attached to $x$ depends on a choice of word for $x$, but applying a defining relation in the word $x$ corresponds to applying a local move in the tree $S$, and this cannot make $S$ split into several trees since the root of $S$ is frozen (its label corresponds to the last letter of $y\neq 1$). Hence we can apply all local moves with all labels in the (strict) initial section corresponding to a word for $x$, and we keep a Schröder tree on $n-i+1$ leaves. In this way, forgetting the $i$ first empty trees, what we attached to $x$ is an equivalence class of a (strict) initial section of a Schröder tree on $n-i+1$ leaves under local moves, that is, a divisor of $\Delta_{n-i}$. This mapping is injective since one can recover a word for $x$ from the obtained Schröder tree on $n-i+1$ leaves easily by mapping $S$ to $(r, T_1, \dots, T_i, S)$, labelling such a tree, and reading the word obtained by reading the $i$ first empty trees and then the initial section.

It remains to show that it is surjective. Hence consider an initial section of a Schröder tree $S$ on $n-i$ leaves. We must show that, in the tree $(r, T_1, T_2, \dots, T_i, S)$, the initial section of $S$ is a word $a_1 a_2 \cdots a_j$ which labels an element $x'$ of $D_n^0$. Assume that $\rho_n$ is a left-divisor of $x'$. Then, using local moves only involving those labels in the initial section of $S$ corresponding to a word for $x'$, one can transform $(r, T_1, T_2, \dots, T_i, S)$ into a tree of the form $(r, T_1, T_2, \dots, T_i, T_{i+1}, S'_1, \dots, S'_e)$, i.e., $S$ can be split into several trees, the first one (corresponding to $\rho_n$) being an empty tree. This is a contradiction: to split $S$ into several trees, one would need to apply a local move involving the root of $S$, which is frozen since the initial section does not cover the root. Hence $x'\in D_n^0$, and our mapping is surjective. This completes the proof of the second point, as it is clear that our mappings preserve left-divisibility.     

For the first point, we have $D_n^{n+1}=\{\rho_n^{n+1}\}$, hence there is nothing to prove. To show that $D_n^0\cong \mathsf{Div}(\Delta_{n-1})$, one proceeds in a similar way as in the proof of point $1$. Let $x\in D_n^0$ and let $y$ such that $xy=\Delta_n$. Choose words for $x$ and $y$, and consider the corresponding Schröder tree $T=(r, T_1, \dots, T_k)$. Since $x\in D_n^0$, the initial section of $T$ corresponding to the word for $x$ must be a proper initial section of $T_1$. Using local moves on $T_2, \dots, T_k$ (which amounts to changing the word for $y$), we can find a Schröder tree that is equivalent to $T$ under local moves, and that is of the form $(r, \widetilde{T}_1, \widetilde{T}_2)$, where $\widetilde{T}_1$ still has the chosen word for $x$ as a proper initial section, and $\widetilde{T}_2$ is the empty tree with only one leaf. In particular $\widetilde{T}_1$ is a Schröder tree on $n$ leaf. Applying defining relations to words for $x$ amounts to applying local moves inside the first tree, and arguing as in the first point this establishes the isomorphism of posets between $D_n^0$ and $\mathsf{Div}(\Delta_{n-1})$.    
\end{proof}

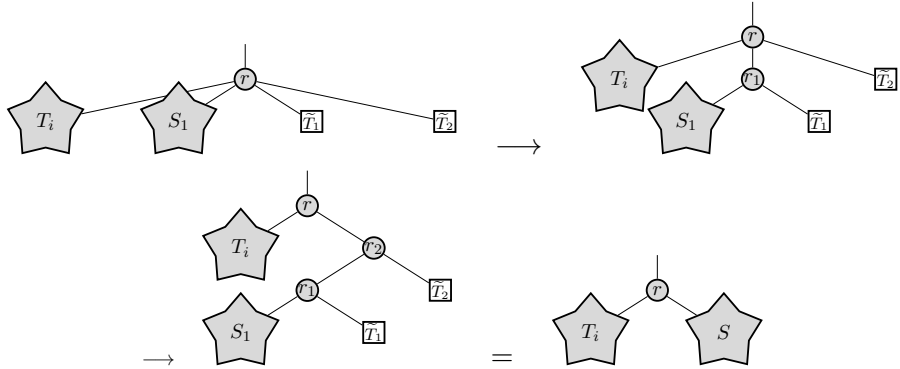
\begin{figure}[h!]
	\centering
	\scalebox{.7}{\begin{tikzpicture}
				[ level distance=15mm,
				level 1/.style={sibling distance=20mm,level distance=8mm},
				level 2/.style={sibling distance=25mm,level distance=8mm},
				inner/.style={circle,draw=black,fill=black!15,inner sep=0pt,minimum size=4mm,line width=1pt},
				leaf/.style={rectangle,draw=black,inner sep=0pt,minimum size=4mm,line width=1pt},  
				dots/.style={star,draw=black,inner sep=0pt,minimum size=8mm, text width=7mm,align=center, line width=1pt,fill=black!15},  
				root/.style={}               
				edge from parent/.style={draw,line width=1pt}]
				
				\node [root] {}
				child {node[inner] {$r$}
					child{node[dots] {$T_i$}}
					child{node[dots] {$S_1$}}
					child{node[leaf] {\footnotesize $\widetilde{T}_1$}}
					child{node[leaf] {\footnotesize $\widetilde{T}_2$}}
				};
		\end{tikzpicture}}
	\quad $\longrightarrow$
	\quad
	\scalebox{.7}{\begin{tikzpicture}
			[ level distance=15mm,
			level 1/.style={sibling distance=20mm,level distance=8mm},
			level 2/.style={sibling distance=25mm,level distance=8mm},
			inner/.style={circle,draw=black,fill=black!15,inner sep=0pt,minimum size=4mm,line width=1pt},
			leaf/.style={rectangle,draw=black,inner sep=0pt,minimum size=4mm,line width=1pt},  
			dots/.style={star,draw=black,inner sep=0pt,minimum size=8mm, text width=7mm,align=center, line width=1pt,fill=black!15},  
			root/.style={}               
			edge from parent/.style={draw,line width=1pt}]
			
			\node [root] {}
			child {node[inner] {$r$}
				child{node[dots] {$T_i$}}
						child{ node[inner] {$r_1$} 
					child{node[dots] {$S_1$}} 
				    child{node[leaf] {\footnotesize $\widetilde{T}_1$} }}
				child{node[leaf] {\footnotesize $\widetilde{T}_2$}}
			};
	\end{tikzpicture}}\\
\scalebox{.7}{$\longrightarrow$ \quad {\begin{tikzpicture}
			[ level distance=15mm,
			level 1/.style={sibling distance=20mm,level distance=8mm},
			level 2/.style={sibling distance=25mm,level distance=8mm},
			inner/.style={circle,draw=black,fill=black!15,inner sep=0pt,minimum size=4mm,line width=1pt},
			leaf/.style={rectangle,draw=black,inner sep=0pt,minimum size=4mm,line width=1pt},  
			dots/.style={star,draw=black,inner sep=0pt,minimum size=8mm, text width=7mm,align=center, line width=1pt,fill=black!15},  
			root/.style={}               
			edge from parent/.style={draw,line width=1pt}]
			
			\node [root] {}
			child {node[inner] {$r$}
				child{node[dots] {$T_i$}}
				child{node[inner] {$r_2$}
					child{ node[inner] {$r_1$} 
					child{node[dots] {$S_1$}} 
					child{node[leaf] {\footnotesize $\widetilde{T}_1$} }}
				child{node[leaf] {\footnotesize $\widetilde{T}_2$}}}
			};
\end{tikzpicture}}}
\quad $=$
\quad
\scalebox{.7}{\begin{tikzpicture}
		[ level distance=15mm,
		level 1/.style={sibling distance=20mm,level distance=8mm},
		level 2/.style={sibling distance=25mm,level distance=8mm},
		inner/.style={circle,draw=black,fill=black!15,inner sep=0pt,minimum size=4mm,line width=1pt},
		leaf/.style={rectangle,draw=black,inner sep=0pt,minimum size=4mm,line width=1pt},  
		dots/.style={star,draw=black,inner sep=0pt,minimum size=8mm, text width=7mm,align=center, line width=1pt,fill=black!15},  
		root/.style={}               
		edge from parent/.style={draw,line width=1pt}]
		
		\node [root] {}
		child {node[inner] {$r$}
			child{node[dots] {$T_i$}}
			child{node[dots] {$S$}}
		};
\end{tikzpicture}}
	\caption{Illustration for the proof of Proposition~\ref{dni_trees}.}\label{fig:merge}
\end{figure}

\section{Enumerative results}\label{sec_enumerative}

We have already seen (Theorem~\ref{thm_main_reduced}) that the words for $\rho_n^{n+1}$ are in bijection with Schröder trees on $n+1$ leaves. In this section, we give some additional enumerative results for several families of particular elements of $M_n$. 

\subsection{Number of simple elements}

\begin{cor}\label{cor_fib_even}
Let $n\geq 2$, and let $A_n:=|\mathsf{Div}(\Delta_n)|$. Then \begin{align}\label{fib_a} A_n=2 A_0 + 2 A_{n-1} + \sum_{i=1}^{n-2} A_i.\end{align} 
It follows that $A_n=F_{2n}$, where $F_0, F_1, F_2, \dots$ denotes the Fibonacci sequence $1, 2, 3, 5, 8, ...$ inductively defined by $F_0=1, F_1=2$, and $F_i=F_{i-1}+F_{i-2}$ for all $i\geq 2$.  The sequence of the $A_n$s is referred as \href{https://oeis.org/A001906}{A001906} in \cite{oeis}.
\end{cor}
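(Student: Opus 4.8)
The plan is to count $\mathsf{Div}(\Delta_n)$ block by block along the disjoint decomposition $\mathsf{Div}(\Delta_n)=\coprod_{0\le i\le n+1}D_n^i$, and then to extract a clean three-term recurrence from~\eqref{fib_a}. First I would invoke Proposition~\ref{dni_trees}: since $D_n^0\cong\mathsf{Div}(\Delta_{n-1})$, $D_n^i\cong\mathsf{Div}(\Delta_{n-i})$ for $1\le i\le n$, and $D_n^{n+1}\cong\mathsf{Div}(\Delta_0)$, passing to cardinalities gives $|D_n^0|=A_{n-1}$, $|D_n^i|=A_{n-i}$, and $|D_n^{n+1}|=A_0=1$. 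Summing over the disjoint blocks then yields $A_n=A_{n-1}+\sum_{i=1}^{n}A_{n-i}+A_0$.

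The next step is pure bookkeeping, to massage this into the stated shape. Reindexing $\sum_{i=1}^{n}A_{n-i}=\sum_{j=0}^{n-1}A_j$ and peeling off the $j=0$ and $j=n-1$ terms gives $A_n=A_{n-1}+A_0+\bigl(A_0+A_{n-1}+\sum_{i=1}^{n-2}A_i\bigr)=2A_0+2A_{n-1}+\sum_{i=1}^{n-2}A_i$, which is exactly~\eqref{fib_a}. Note that the inner sum is empty precisely when $n=2$, so the formula is consistent with the hypothesis $n\ge 2$; one can also cross-check $A_0=1$ and $A_1=3$ against $\mathsf{Div}(\Delta_0)=\{\bullet\}$ and $\mathsf{Div}(\Delta_1)=\{1,\rho_1,\rho_1^2\}$.

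For the closed form I would not verify $A_n=F_{2n}$ directly against the convolution~\eqref{fib_a}, but rather eliminate the sum. Writing~\eqref{fib_a} for $n$ and for $n-1$ (the latter valid once $n\ge 3$) and subtracting, the partial sums telescope via $\sum_{i=1}^{n-2}A_i-\sum_{i=1}^{n-3}A_i=A_{n-2}$, and all that survives is $A_n-A_{n-1}=2A_{n-1}-2A_{n-2}+A_{n-2}$, i.e. $A_n=3A_{n-1}-A_{n-2}$; the case $n=2$ is checked by hand using $A_0=1$, $A_1=3$. It then remains to see that $F_{2n}$ obeys the same recurrence. This is the standard even-index identity: from $F_i=F_{i-1}+F_{i-2}$ one gets $F_{2n}=2F_{2n-2}+F_{2n-3}$ and $F_{2n-3}=F_{2n-2}-F_{2n-4}$, hence $F_{2n}=3F_{2n-2}-F_{2n-4}$. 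Matching the initial values $A_0=1=F_0$ and $A_1=3=F_2$, an immediate induction on $n$ gives $A_n=F_{2n}$.

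I expect no genuine obstacle here: every structural input is already supplied by Proposition~\ref{dni_trees}, so the argument is essentially an exercise in reindexing together with the recognition that $a_n=3a_{n-1}-a_{n-2}$ is the recurrence satisfied by even-indexed Fibonacci numbers. The only place demanding care is the subtraction step, where one must ensure that both instances of~\eqref{fib_a} being subtracted are valid (hence treat $n=2$ separately) and that the telescoping of the truncated sums is carried out correctly.
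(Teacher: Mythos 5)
Your proposal is correct and follows the same route as the paper: the recurrence~\eqref{fib_a} comes from the disjoint union $\mathsf{Div}(\Delta_n)=\coprod_{0\le i\le n+1}D_n^i$ together with Proposition~\ref{dni_trees}, and the identification $A_n=F_{2n}$ is then a matter of checking that $F_{2n}$ satisfies the same recurrence with the same initial values. The only difference is that the paper dismisses this last verification as elementary, whereas you carry it out explicitly by telescoping~\eqref{fib_a} down to $A_n=3A_{n-1}-A_{n-2}$ and matching it with the even-index Fibonacci recurrence, which is a perfectly sound way to fill in that detail.
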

\begin{proof}
The equality~\eqref{fib_a} follows immediately from the disjoint union $\Div(\Delta_n)=\coprod_{0\leq i \leq n+1} D_n^i$ and Proposition~\ref{dni_trees}. We have $A_0=F_0$, $A_1=3=F_2$, and it is elementary to check that the inductive formula given by~\ref{fib_a} is also satisfied by the sequence $F_{2n}$. This shows that $A_n=F_{2n}$ for all $n\geq 0$. 
\end{proof}

\begin{definition}
We call the lattice $(\mathsf{Div}(\Delta_n), \leq)$ the \defn{even Fibonacci lattice}. 
\end{definition}

\subsection{Number of left-divisors of the lcm of the atoms and odd Fibonacci lattice}

The set $\mathsf{Div}_L(\rho_n^n)$ of left-divisors of $\rho_n^n$ also forms a lattice under the restriction of left-divisibility, since it is an order ideal in the lattice $(\mathsf{Div}(\Delta_n), \leq)$. In terms of the Garside monoid $M_n$, the element $\rho_n^n$ is both the left- and right-lcm of the generators $\mathcal{S}=\{\rho_1, \rho_2, \dots, \rho_n\}$ (see~\cite[Corollary 4.17]{Gobet}). For $n\geq 1$ we set $B_n:=|\mathsf{Div}_L(\rho_n^n)|$. 

\begin{lemma}\label{lem_odd_fib}
We have $B_n= F_{2n-1}$ for all $n\geq 1$. The sequence of the $B_n$s is referred as \href{https://oeis.org/A001519}{A001519} in \cite{oeis}.
\end{lemma}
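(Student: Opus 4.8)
The plan is to reduce the computation of $B_n=|\mathsf{Div}_L(\rho_n^n)|$ to the already-known count $A_n=|\mathsf{Div}(\Delta_n)|=F_{2n}$ of Corollary~\ref{cor_fib_even}, by exhibiting a single simple bijection rather than building a separate recursion. First I would note that since $\Delta_n=\rho_n^{n+1}=\rho_n\cdot\rho_n^n$ we have $\rho_n^n\leq\Delta_n$, so $\mathsf{Div}_L(\rho_n^n)\subseteq\mathsf{Div}(\Delta_n)$ and $B_n$ simply counts the left-divisors of $\rho_n^n$ sitting inside the lattice of simples.

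The key step is to show that left multiplication by $\rho_n$, namely $z\mapsto\rho_n z$, restricts to a bijection between $\mathsf{Div}_L(\rho_n^n)$ and the set $U:=\{x\in\mathsf{Div}(\Delta_n)\mid \rho_n\leq x\}$ of simples that are left-multiples of $\rho_n$. Injectivity is immediate from left-cancellativity in $M_n$. For well-definedness, if $z\leq\rho_n^n$, write $\rho_n^n=zc$; then $(\rho_n z)c=\rho_n\rho_n^n=\Delta_n$, so $\rho_n z\in\mathsf{Div}(\Delta_n)$, and clearly $\rho_n\leq\rho_n z$. For surjectivity, take $x\in U$ and write $x=\rho_n z$; since $x\leq\Delta_n$ there is $c'$ with $\rho_n z c'=\Delta_n=\rho_n\rho_n^n$, and left-cancelling $\rho_n$ gives $zc'=\rho_n^n$, i.e. $z\in\mathsf{Div}_L(\rho_n^n)$ with $\rho_n z=x$. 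Hence $B_n=|U|$.

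It then remains to count $U$. By definition $U$ is exactly the complement in $\mathsf{Div}(\Delta_n)$ of $D_n^0=\{x\mid \rho_n\not\leq x\}$ (the elements with $d(x)=0$), so $|U|=A_n-|D_n^0|$. By Proposition~\ref{dni_trees} we have $D_n^0\cong\mathsf{Div}(\Delta_{n-1})$, whence $|D_n^0|=A_{n-1}$. Therefore $B_n=A_n-A_{n-1}=F_{2n}-F_{2n-2}=F_{2n-1}$, the last equality being the defining Fibonacci recurrence $F_{2n}=F_{2n-1}+F_{2n-2}$ (valid for $n\geq1$). The base case $n=1$ is consistent, since $\mathsf{Div}_L(\rho_1)=\{1,\rho_1\}$ has $2=F_1$ elements, and the identification of $(B_n)$ with the OEIS sequence A001519 is then immediate.

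I expect no serious obstacle here: once the bijection $z\mapsto\rho_n z$ is spotted, the only inputs are left-cancellativity, the factorization $\Delta_n=\rho_n\rho_n^n$, the isomorphism $D_n^0\cong\mathsf{Div}(\Delta_{n-1})$ of Proposition~\ref{dni_trees}, and the formula $A_k=F_{2k}$. The one point that genuinely requires care is checking that left multiplication by $\rho_n$ both lands inside $\mathsf{Div}(\Delta_n)$ and surjects onto $U$; this is precisely where the factorization $\Delta_n=\rho_n\rho_n^n$ and cancellativity are needed, and everything else is bookkeeping with the already-established enumerative and structural results.
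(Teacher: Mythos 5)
Your proposal is correct and follows essentially the same route as the paper: both prove that $z\mapsto\rho_n z$ identifies $\mathsf{Div}_L(\rho_n^n)$ with the set of simples left-divisible by $\rho_n$ (i.e.\ the complement of $D_n^0$), using the factorization $\Delta_n=\rho_n\rho_n^n$ and cancellativity, and then conclude $B_n=A_n-A_{n-1}=F_{2n}-F_{2n-2}=F_{2n-1}$ via Proposition~\ref{dni_trees} and Corollary~\ref{cor_fib_even}. No gaps to report.
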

 
\begin{proof}
Let $x\in\mathsf{Div}(\Delta_n)$. We claim that $x\in\mathsf{Div}_L(\rho_n^n)$ if and only if $\rho_n x \in\mathrm{Div}(\Delta_n)$. Indeed, if $x\leq \rho_n^n$, there is $y\in M_n$ such that $xy=\rho_n^n$. We then have $\rho_n xy=\rho_n^{n+1}=\Delta_n$, hence $\rho_n x$ is a left-divisor of $\Delta_n$. Conversely, assume that $\rho_n x\in \mathsf{Div}(\Delta_n)$. It follows that there is $y\in \mathsf{Div}(\Delta_n)$ such that $\rho_n xy =\Delta_n=\rho_n^{n+1}$. By cancellativity we get that $xy=\rho_n^n$, hence $x\in \mathsf{Div}_L(\rho_n^n)$. 

It follows that $\mathsf{Div}_L(\rho_n^n)$ is in bijection with the set $$\{ \rho_nx \ \vert \ x\in \mathsf{Div}(\Delta_n)\} \cap \mathsf{Div}(\Delta_n).$$ But this set is nothing but $\coprod_{1\leq i \leq n+1} D_n^i$. It follows that $$B_n=|\mathsf{Div}(\Delta_n)|-|D_n^0|=|\mathsf{Div}(\Delta_n)|-|\mathsf{Div}(\Delta_{n-1)}|,$$ where the last equality follows from point~(1) of Proposition~\ref{dni_trees}. By Corollary~\ref{cor_fib_even} we thus get that $$B_n=A_n-A_{n-1}=F_{2n}-F_{2n-2}=F_{2n-1},$$ which concludes the proof.     
\end{proof}

\begin{definition}
We call the lattice $(\mathsf{Div}_L(\rho_n^n), \leq)$ the \defn{odd Fibonacci lattice}. 
\end{definition}

Both lattices for $M_3$ are depicted in Figure~\ref{simples_3}. 

\begin{rmq}
	Note that the set of right-divisors of $\rho_n^n$ also has cardinality $B_n$: in fact, the two posets $(\mathrm{Div}_L(\rho_n^n), \leq_L)$ and $(\mathrm{Div}_R(\rho_n^n), \leq_R)$ are anti-isomorphic via $x \mapsto \overline{x}$, where $\overline{x}$ is the element of $M_n$ such that $\overline{x}x=\rho_n^n$ (this element is unique by right-cancellativity). \end{rmq}

\subsection{Number of words for the divisors of the Garside element}
\begin{lemma}\label{lem:leftmostchild}
Let $T_1$ and $T_2$ be two Schröder trees with $n$ leaves labelled by $m\geq n-1$, and denote by $m_1$ and $m_2$ the corresponding words obtained by reading the labels in post-order. If the words $m_1$ and $m_2$ have a common prefix $x_1x_2\cdots x_l$, then $x_i$ labels a leftmost child in $T_1$ if and only if it labels a leftmost child in $T_2$. 
\end{lemma}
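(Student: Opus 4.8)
The plan is to prove the sharper statement that \emph{whether $x_i$ labels a leftmost child is determined by the prefix $x_1x_2\cdots x_i$ together with the fixed parameters $m$ and $n$}; the lemma is then immediate, because $T_1$ and $T_2$ share $m$, $n$ and the common prefix. The input I would feed into this are the two possible shapes of a label, as recorded in the labelling rules and in the computation showing non-negativity: a vertex $v$ that is a leftmost child carries the label $\lambda(v)$ equal to the number of leaves of its right-sibling forest, a strictly positive integer at most $n-1$; a vertex that is not a leftmost child (the global root, or a non-leftmost child) carries the label $m-|L_v|+1$, where $|L_v|$ is the number of leaves of the subtree rooted at $v$. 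Alongside these I would use Lemma~\ref{lem:weight_tree}, which says that any subtree, and hence any forest, with $s$ leaves has weight exactly $ms$.

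First I would make precise the deterministic left-to-right parser already implicit in the proof of Proposition~\ref{prop:label_injectivity}, and observe that every decision it makes depends only on the letters read so far. Started at the leftmost leaf, the parser walks down the leftmost path of the current subtree, emitting the verdict ``leftmost child'' for each vertex it meets there; after such a vertex with label $\ell$ it enters the corresponding right-sibling forest, and by Lemma~\ref{lem:weight_tree} that forest is closed precisely when the running sum of the labels read since entering it reaches $m\ell$, i.e. when a partial sum of the prefix hits a prescribed multiple of $m$. Thus each forest boundary — and so the knowledge that the next letter is again a leftmost-path vertex rather than more forest — is read off from a partial sum of the prefix. Inside a forest the parser recurses subtree by subtree, and a subtree rooted at a non-leftmost child $c$ is closed exactly when the running sum of the labels along its own leftmost path, root included, reaches $m$, since $(|L_c|-1)+(m-|L_c|+1)=m$. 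Keeping a stack of the still-open forests, the whole state after $x_1\cdots x_i$, and in particular the verdict emitted for $x_i$, is a function of $x_1\cdots x_i$, $m$ and $n$ alone.

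The second step is a bookkeeping check that the parser is correct, namely that the letters it declares ``leftmost child'' are exactly those labelling leftmost children: each leftmost child lies on the leftmost path of a unique subtree, whose root is the nearest non-leftmost-child ancestor (or the global root), and is emitted there as a leftmost-path vertex, while each non-leftmost child is the root of exactly one recursively parsed subtree and is emitted as a root. I would then record that $l$ never reaches either root. Indeed we may assume $m_1\neq m_2$ (otherwise $T_1=T_2$ by Proposition~\ref{prop:label_injectivity}); neither word is a prefix of the other, since both have total label-sum $mn$ by Lemma~\ref{lem:weight_tree}, so a proper containment would leave a nonempty suffix of labels summing to $0$, which is impossible because $0$ can occur at most once in a valid word (only as the root, and only when $m=n-1$). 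Hence $l$ is strictly below the length of each word, and the parser's verdicts on positions $1,\dots,l$ agree for $T_1$ and $T_2$.

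The step I expect to be the genuine obstacle is the rigorous justification of prefix-determinism at the forest boundaries, together with the degenerate case $\lambda(\mathrm{root})=0$, which occurs exactly when $m=n-1$. There the running sum along the top-level leftmost path reaches $m$ already at the last honest leftmost child, one step before the root, so the naive rule ``stop when the leftmost-path sum equals $m$'' would misclassify that vertex as a root. The clean way around this is precisely the properness of the common prefix established above: the only vertex that can carry the label $0$ is the global root, and it is never examined, so within the prefix every partial-sum test is triggered by a genuine non-leftmost child with strictly positive label, and the boundary detection remains unambiguous.
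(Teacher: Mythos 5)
Your proposal is correct in substance and rests on the same engine as the paper's proof --- the left-to-right parser made explicit in the proof of Proposition~\ref{prop:label_injectivity} together with the weight count of Lemma~\ref{lem:weight_tree} --- but it is packaged differently. The paper argues by induction on the number of leaves: the parser shows that all subtrees of the root entirely covered by the common prefix coincide in $T_1$ and $T_2$, the first partially covered subtrees are then normalized by local moves (as in the proof of Proposition~\ref{dni_trees}) so as to have the same number of leaves, and one recurses. You instead prove the sharper statement that the verdict for $x_i$ is a function of $x_1\cdots x_i$, $m$ and $n$ alone, by checking that every decision of the parser (closing a forest at cumulative weight $m\ell$, closing a nested subtree when the leftmost-path sum reaches $m$, terminating the top-level leftmost path when the sum reaches $n-1$) is read off from partial sums of the prefix; this dispenses with both the induction and the local-move surgery and is, if anything, cleaner --- it also makes explicit the fact, used silently in the paper's first case, that neither word can be a proper prefix of the other. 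Two points want tightening. First, in that prefix-exclusion argument, a suffix consisting of a single $0$ is not ruled out merely by ``$0$ occurs at most once'': you need the extra line that such a suffix forces $m=n-1$, whence the last letter of $m_1$ (the root of $T_1$) is also $0$ and $m_2$ would contain two zeros. Second, your treatment of the degenerate case aims at the wrong vertex: when $m=n-1$ the vertex at risk of misclassification is the \emph{leftmost child of the global root}, which carries a positive label and may perfectly well lie in the prefix; the correct fix is not that the root is never examined, but that the top-level walk is terminated by the criterion ``leftmost-path sum equals $n-1$'', which identifies that vertex as the leftmost child of the root (hence still emits the verdict ``leftmost child''), while the rule ``sum reaches $m$ at the subtree root'' is only ever invoked inside forests, where the bound $|L_c|-1\leq n-2<m$ guarantees it cannot fire prematurely. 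With those two sentences added your argument is complete.
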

\begin{proof}
We prove the result by induction on the number of leaves. If $x_1\cdots x_l$ is obtained by reading all the vertices of $T_1=(r,S_1,\cdots, S_k)$, then $m_1 = x_1\cdots x_l=m_2$ and by Proposition \ref{prop:label_injectivity}, we have $T_1 = T_2$, hence there is nothing to prove. Otherwise, let $S_j$ be the first subtree of $T_1$ which is not covered by the word $x_1\cdots x_l$, similarly let $U_k$ the first subtree of $T_2=(r,U_1,\cdots U_v)$ which is not covered by $x_1\cdots x_l$. Looking at the proof of Proposition \ref{prop:label_injectivity}, we see that the first subtrees $S_1,\cdots, S_{j-1}$ are completely determined by the word $x_1\cdots x_l$, hence we have $j=k$ and $S_i = U_i$ for all $i< k$. Let $x_s$ be the letter of $x_1\cdots x_l$ labelling the first vertex of $S_j$. Let $m_1'$ be the subword of $m_1$ and $m_2'$ the subword of $m_2$ starting at the $x_s$. As explained in the proof of Proposition \ref{prop:label_injectivity}, we can determine the subword $m_1^{j}$ of $m_1'$ which correspond to $S_j$. The trees $U_j$ and $S_j$ do not need to have the same number of leaves. If one of the trees, say $U_j$, has less leaves, then one can apply local moves in the trees $U_{j},U_{j+1},\cdots U_v$ as in the proof of Proposition \ref{dni_trees} in order to obtain a tree $\tilde{U_j}$ with the same number of leaves as $S_j$. This will modify the word $m_2'$, but not the prefix $x_s\cdots x_l$, and $x_i$ labels a leftmost child in $U_j$ if and only if it labels a leftmost child in $\tilde{U}_j$ (see Figure \ref{fig:merge} for an illustration). After doing the modification, we consider the subword $m_2^{j}$ corresponding to the tree $\tilde{U}_j$ and apply the induction hypothesis to $m_1^{j}$ and $m_2^{j}$.\end{proof}

\begin{theorem}\label{thm_words_whole}
The set of words for the left-divisors of $\rho_{n}^{n+1}$ is in bijection with the set of Schröder trees with $n+2$ leaves. 
\end{theorem}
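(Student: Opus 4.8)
The plan is to reduce the statement to a property of \emph{prefixes} of words for the Garside element, and then to package each such prefix into a Schröder tree carrying one extra leaf that records where the prefix stops. First I would observe that a word $w=x_1\cdots x_k$ over $\mathcal{S}$ represents a left-divisor of $\Delta_n=\rho_n^{n+1}$ if and only if $w$ is a prefix of some word for $\Delta_n$: indeed, if $x:=[w]\leq\Delta_n$ and $xy=\Delta_n$, then concatenating $w$ with any word for $y$ gives a word for $\Delta_n$ having $w$ as a prefix, and conversely every prefix of a word for $\Delta_n$ manifestly represents a left-divisor. Hence the set in the statement is exactly the set of distinct letter-sequences arising as prefixes of words for $\Delta_n$. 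By Theorem~\ref{thm_main_reduced} the words for $\Delta_n$ are precisely the post-order readings $\Phi_n(T)$ of the Schröder trees $T$ on $n+1$ leaves, so the task becomes to biject the set of post-order prefixes of these readings with the set of Schröder trees on $n+2$ leaves.

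Next I would show that a prefix $w$ determines, independently of how it is completed to a full word for $\Delta_n$, a canonical \emph{partial tree}: the completely read subtrees, together with the current chain of still-open ancestors, and, for each read letter, the knowledge of whether it labels a leftmost child. The reconstruction is exactly the one used in the proof of Proposition~\ref{prop:label_injectivity}, now halted at position $k$ rather than run to completion; the crucial point that the outcome depends only on $w$ and not on the chosen completion is precisely the content of Lemma~\ref{lem:leftmostchild}, which guarantees that two words sharing the prefix $w$ assign the same leftmost-child status to each of its letters. Combined with Lemma~\ref{lem:wordsubtree}, which controls the powers of $\rho_n$ read off from complete subtrees, this makes the assignment $w\mapsto(\text{partial tree})$ well defined.

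Finally I would turn this data into a genuine Schröder tree with $n+2$ leaves as follows. Fix once and for all a \emph{canonical} completion of $w$ to a word $W$ for $\Delta_n$ (so the unread remainder of the corresponding tree $T_W$ on $n+1$ leaves has a prescribed shape), and then insert a single new leaf at the reading cursor, immediately after the last letter of $w$, so that the inserted leaf becomes the post-order successor of $w$ in the augmented tree on $n+2$ leaves. By the previous paragraph the part of $T_W$ read by $w$ is forced by $w$ alone, so only the canonical tail varies, and the marker leaf sits exactly at the interface. Conversely, given a Schröder tree on $n+2$ leaves, I would identify this distinguished marker leaf as the unique leaf beyond which the post-order reading is the prescribed canonical tail, delete it, and read the truncated tree to recover a word $w$ for a left-divisor.

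The main obstacle is checking that these two constructions are mutually inverse, and in particular that the marker leaf is \emph{intrinsically} recoverable from an arbitrary tree on $n+2$ leaves, with no extra marking carried along. It is here that Lemma~\ref{lem:leftmostchild} is indispensable: it is what forces the local configuration at the cut, hence the attachment point of the marker leaf, to depend on $w$ alone, which is what allows a consistent canonical completion and a well-defined inverse. Once the bijection is established, counting gives that the number of words for the left-divisors of $\rho_n^{n+1}$ equals the number of Schröder trees on $n+2$ leaves, that is, the corresponding little Schröder number (cf.\ Corollary~\ref{cor_words_g}).
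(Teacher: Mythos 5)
Your opening reduction is fine and consistent with the paper: a word represents a left-divisor of $\Delta_n$ exactly when it is a prefix of a word for $\Delta_n$, and Lemma~\ref{lem:leftmostchild} does make the ``partial tree'' read off from such a prefix (in particular the leftmost-child status of each letter) independent of the chosen completion. But the heart of your argument --- the bijection itself --- is not actually constructed. You never define the ``canonical completion'' of a prefix, and without it nothing else in the third paragraph is meaningful. More seriously, ``insert a single new leaf at the reading cursor'' does not determine a Schröder tree: the post-order position of the new leaf does not fix its attachment point (it could be adjoined as a new last child of any of several still-open ancestors of the cursor, or by creating a new inner vertex), so the forward map is not well defined. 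The inverse map rests on the claim that every Schröder tree on $n+2$ leaves has a \emph{unique} leaf beyond which the reading is ``the prescribed canonical tail''; this is precisely the content of the theorem and you assert it rather than prove it. Note also that the assignment $(\text{tree on } n+1 \text{ leaves}, \text{cursor}) \mapsto \text{prefix}$ is heavily many-to-one (many trees share a given prefix), so a marker-leaf construction must implicitly quotient by this, which is exactly where the difficulty sits.

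For comparison, the paper does not attempt a direct marked-leaf bijection. It stratifies $\Div(\Delta_n)=\coprod_i D_n^i$, splits each word as $w=w_1w_2$ via Lemma~\ref{lemma_word}, builds a forgetful map $\gamma$ from words for elements of $D_n^i$ to words for divisors of $\Delta_{n-i}$, computes its fibers (of size $s_{i-1}$ on each of two pieces $E_1$, $E_2$, giving $d_n^i=2s_{i-1}d_{n-i}$ for $i\geq 2$, with the boundary cases $d_n^0=d_n^1=d_{n-1}$ and $d_n^{n+1}=s_n$), and then concludes from the identity $s_{n+1}=3s_n+2\sum_{i=1}^{n-1}s_is_{n-i}$. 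That final identity is genuinely nontrivial (the paper cites \cite{qi}), and nothing in your sketch produces it or any substitute for it. If you want to salvage a truly bijective proof, you would need to specify the canonical completion explicitly, prove the marker leaf is intrinsically characterized, and verify the two maps are mutually inverse; as written, the proposal has a gap exactly where the theorem's content lies.
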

\begin{proof}
Let us denote by $s_k$ the number of Schröder trees with $k+1$ leaves, and $d_k$ the number of words for the divisors of $\rho_k^{k+1}$. 

Recall that $ \Div(\Delta_n)=\coprod_{0\leq i \leq n+1} D_n^i$, and let $d_{n}^{i}$ be the number of words for the elements of $D_n^{i}$. If $i = n+1$, then $\rho_n^{n+1}$ is the only element of $D_n^{i}$ and by Theorem \ref{thm_main_reduced}, there are $s_n$ words for this element, hence we have $d_{n}^{n+1} = s_n$.

Let $0\leq i \leq n$ and $w = x_1 \cdots x_l$ be a word for an element of $D_n^{i}$. The word $w$ is a strict prefix of a Schröder tree $T = (r,S_1,\cdots, S_k)$. By Lemma \ref{lemma_word}, $w = w_1 w_2$ where $w_1$ is a word for $\rho_n^{i}$ and $\rho_n$ is not a left divisor of $w_2$ (when $i=0$ the word $w_1$ is empty). Let $S_j$ be the last subtree of $T$ which has a vertex labelled by a letter of $w_1$. We can apply a succession of defining relations to $w_1$ in order to obtain $\rho_n^i$. These relations correspond to local move in the trees $S_1,\cdots,S_j$ which collapse all the trees $S_1,\cdots S_j$ to empty trees. In order to reduce $S_j$ to a list of empty trees we must use its root. Since the root is always the last label of the tree in post-order, the word $w_1$ covers all the first $j$ trees which have in total $i$ leaves. Since $\rho_n$ does not divide $w_2$, we see that $w_2$ is a (possibly empty) strict prefix of $S_{j+1}$. It is also possible to modify the trees $S_{j+2},\cdots, S_k$ without changing the first $j$ trees. Indeed, as in the proof of Proposition \ref{dni_trees} we can reduce the trees $S_{j+2},\cdots, S_{k}$ to empty trees and then merge them (until we can) to $S_{j+1}$. 

\begin{itemize}
\item[$\bullet$] When $i=0$,  after modification we obtain a tree $\tilde{T} = (r,\tilde{S},L)$ where $L$ the empty tree, $\tilde{S}$ is a tree with $n$ leaves and $w=w_2$ is a strict prefix of $\tilde{S}$. 
\item[$\bullet$] When $1\leq i \leq n$, we obtain a tree $\tilde{T} = (r,S_1,\cdots, S_j,\widetilde{S}_{j+1})$ and $w_2$ is a strict prefix of the tree $\widetilde{S}_{j+1}$ with $n+1-i$ leaves. 
\end{itemize}

In both cases, the tree $\widetilde{S}_{j+1}$ is obtained by possibly introducing new vertices to $S_{j+1}$, and as Figure~\ref{fig:merge} shows, these new vertices occur after the vertices of $S_{j+1}$, in post-order, hence $w_2$ is still a strict prefix of $\widetilde{S}_{j+1}$. Hence, we see that in the decomposition $w=w_1w_2$ of Lemma \ref{lemma_word}, the word $w_1$ is obtained by reading all the vertices of a Schröder tree with $i$ leaves and $w_2$ is a strict prefix of a Schröder tree, denoted by $\tilde{S}$, with $l_i +1$ leaves where $l_i= n-i$ leaves if $i\neq 0$ and $l_i = n-1$ if $i=0$.

Let $w = w_1 w_2$ be a word of an element of $D_{n}^{i}$ with $w_2$ having $t$ letters. Let $\tilde{S}$ be a Schröder tree with $l_i+1$ leaves having $w_2$ as a strict prefix. Then, we construct a word $\gamma(w)$ by first extracting $\tilde{S}$, then labelling it accordingly to its number of leaves (i.e., with $m=l_i$) and finally taking its first $t$ letters in post-order. Algebraically, it is easy to see how the word $\gamma(w)$ is obtained from $w_2$: if $w_i$ is the label of a leftmost child in $\tilde{S}$, we have $\gamma(w)_i = w_i$. Otherwise, since the tree $\tilde{S_j}$ has $l_i+1$ leaves, we have $\gamma(w)_i = w_i - n+ (l_i) $. A priori $\gamma(w)$ depends on the choice of a tree $\tilde{S}$, but Lemma \ref{lem:leftmostchild} tells us that $\gamma(w)$ only depends on $w_2$. The word $\gamma(w)$ is a prefix of a Schröder tree with $l_i+1$ leaves, hence it is a word for a divisor of $\Delta_{l_i}$. We have obtained a map $\gamma$ from the set of words for the elements of $D_n^{i}$ to the set of words for the divisors of $\Delta_{l_i}$. 

Conversely, if $z$ is a word of length $k$ for a divisor of $\Delta_{l_i}$, it is a prefix (strict since the root is not contributing) of a Schröder tree $S$ with $l_i+1$ leaves. We can view $S$ as a subtree of a Schröder tree with $n+1$ leaves by considering:

\begin{itemize}
\item $T = (r,S,L)$ when $i=0$;
\item $T = (r,\delta_{i},S)$ when $i\geq 1$.
\end{itemize}

Reading up to the first $k$ letters of the subtree $S$ produces a word $w=w_1w_2$ of an element of $D_{n}^{i}$ such that $\gamma(w) = z$. Hence $\gamma$ is surjective and we set $\epsilon(z)=w_2$. As before $\epsilon(z)$ only depends on $z$, not on the tree having $z$ as a prefix.

When $i=0$, the map $\gamma$ is injective, indeed if $w$ and $z$ are two words such that $\gamma(w) = \gamma(z)$, then by Lemma \ref{lem:leftmostchild} the labels of the leftmost child in $\gamma(w)$ and $\gamma(z)$ are the same, hence $w$ and $z$ are equal. This proves that $d_n^0 = d_{n-1}$. 

When $i\geq 1$, then $\gamma$ is far from being injective, since it forgets the first part of the tree. The set of words for the elements of $D_{n}^{i}$ is the disjoint union of two sets $E_1$ and $E_2$ where $E_1$ is the set of words $w=w_1w_2$ where $w_1$ covers exactly one tree $S_1$ and $E_2$ is the set of words where $w_1$ covers at least two trees. Note that when $i=1$, the set $E_2$ is empty otherwise both sets are non-empty. Indeed $E_2$ contains at least all the words of the form $\rho_n^i w_2$ and $E_1$ contains at least the words of the form $\rho_{i-1}\rho_n^{i-1}\rho_{n-i-1}w_2$ which correspond to the Schröder bush $\delta_i$ attached as the leftmost subtree of a Schröder tree.

If $z$ is a word for a divisor of $\Delta_{l_i}$, we compute the cardinality of the preimage of $z$ by $\gamma$ by looking at $\gamma^{-1}(z)\cap E_1$ and $\gamma^{-1}(z)\cap E_2$. If $i=1$, we obviously only consider the first case. The elements of $\gamma^{-1}(z)\cap E_1$ are obtained by concatenation of the word of a single Schröder tree with $i$ leaves and $\epsilon(z)$, and the elements of $\gamma^{-1}(z)\cap E_2$ are concatenation of the words of a forest with $i$ leaves made of at least two Schröder tree and $\epsilon(z)$.  Such a forest is nothing but a Schröder tree with $i$-leaves from which the root has been removed. So we have 

\[
 |\gamma^{-1}(z)\cap E_1| = s_{i-1} = |\gamma^{-1}(z)\cap E_2|.
\] 
Taking the sum on all possible words $z$, we have $d_n^{1} = s_0\cdot d_{n-1}$ and $d_n^{i}= 2\cdot s_{i-1} \cdot d_{n-i}$ when $n \geq i\geq 2$.

 We have obtained:
\[
d_n^{0} = d_{n-1};\quad d_n^{1} = s_0\cdot d_{n-1} = d_{n-1};
\]
and 
\[
d_n^{i}= 2\cdot s_{i-1} \cdot d_{n-i} \hbox{ when $n \geq i\geq 2$ and } d_n^{n+1} = s_{n}.
\]
By induction on the number of leaves, we have $d_i = s_{i+1}$, for every $i\leq n-1$, and
\begin{align*}
d_n &= 2s_n + 2\sum_{i=2}^{n}s_{i-1}s_{n-i+1}+s_n\\
& = 3s_n+2\sum_{i=1}^{n-1}s_{i}s_{n-i}.\\
\end{align*} 
Using generating functions, it is not difficult to check that this implies that $d_n = s_{n+1}$, see for example \cite[Theorem 5]{qi}. \end{proof}


\end{document}